\numberwithin{equation}{section}
\newtheorem{thm}{Theorem}[section]
\newtheorem{lemma}[thm]{Lemma}
\newtheorem{prop}[thm]{Proposition}
\newtheorem{cor}[thm]{Corollary}
{\theorembodyfont{\rmfamily}
\newtheorem{defn}[thm]{Definition}
\newtheorem{eg}[thm]{Example}

\newtheorem{rmk}[thm]{Remark}
}
\newcommand{\qed}{\hfill \mbox{\raggedright \rule{.07in}{.1in}}}
\newenvironment{proof}{\vspace{1ex}\noindent{\bf Proof}\hspace{0.5em}}{\hfill\qed\vspace{1ex}}
\newenvironment{pfof}[1]{\vspace{1ex}\noindent{\bf Proof of #1}\hspace{0.5em}}{\hfill\qed\vspace{1ex}}
\newcommand{\R}{{\mathbb R}}
\newcommand{\Z}{{\mathbb Z}}
\newcommand{\T}{{\mathbb T}}
\newcommand{\dotx}{\dot x^{(\epsilon)}}
\newcommand{\x}{x^{(\epsilon)}}
\newcommand{\z}{z_\epsilon}
\newcommand{\y}{y^{(\epsilon)}}
\newcommand{\hatx}{\hat x^{(\epsilon)}}
\newcommand{\eps}{{\epsilon}}
 \newcommand{\supp}{\operatorname{supp}}
 \newcommand{\diam}{\operatorname{diam}}
 \newcommand{\Lip}{{\operatorname{Lip}}}
 \newcommand{\Leb}{{\operatorname{Leb}}}
\newcommand{\SMALL}{\textstyle}
\newcommand{\BIG}{\displaystyle}
\title{Averaging and Rates of Averaging \\ for Uniform Families \\ of Deterministic Fast-Slow Skew Product Systems}
\author{A. Korepanov \hspace{2em}  Z. Kosloff  \hspace{2em} I. Melbourne \\[.75ex]
 {\small Mathematics Institute, University of Warwick, Coventry, CV4 7AL, UK}}
\date{21 September 2015.   Updated 11 February 2016}
\begin{document}

 \maketitle

\begin{abstract}
We consider families of fast-slow skew product maps of the form
  \begin{align*}
x_{n+1}   = x_n+\eps a(x_n,y_n,\eps), \quad
y_{n+1}   = T_\eps y_n,
  \end{align*}
  where $T_\eps$ is a family of nonuniformly expanding maps, and
prove averaging and rates of averaging for the slow variables~$x$ as $\eps\to0$.   Similar results are obtained also for continuous time systems
  \begin{align*}
\dot x   =  \eps a(x,y,\eps), \quad
\dot y   =  g_\eps(y).
  \end{align*}

Our results include cases where the family of fast dynamical systems consists of intermittent maps, unimodal maps (along the Collet-Eckmann parameters) and Viana maps.
\end{abstract}

\section{Introduction}

The classical Krylov-Bogolyubov averaging method~\cite{Sanders2007} deals with skew product flows of the form
  \begin{align*}
\dot x   =  \eps a(x,y,\eps), \quad
\dot y   =  g(y).
  \end{align*}
Let $\nu$ be an ergodic invariant probability measure for the fast flow generated by $g$.  Under a uniform Lipschitz condition on $a$, it can be shown that solutions to the slow $x$ dynamics, suitably rescaled, converge almost surely to solutions of an averaged ODE $\dot X=\bar a(X)$ where $\bar a(x)=\int a(x,y,0)\,d\nu(y)$.    

A considerably harder problem is to handle the fully-coupled situation
  \begin{align*}
\dot x   =  \eps a(x,y,\eps), \quad
\dot y   =  g(x,y,\eps).
  \end{align*}
Here it is supposed that there is a distinguished family of ergodic invariant probability measures $\nu_{x,\eps}$ for the fast vector fields $g(x,\cdot,\eps)$ and the averaged vector field is given by $\bar a(x)=\int a(x,y,0)\,d\nu_{x,0}(y)$.
The first results on averaging for fully-coupled systems were due to
Anosov~\cite{Anosov60} who considered the case where the fast vector fields are Anosov with $\nu_{x,\eps}$ absolutely continuous.  Convergence here is in the sense of convergence in probability with respect to Lebesgue measure.

Kifer~\cite{Kifer04,Kifer05} extended the results of~\cite{Anosov60} to the case where the fast vector fields are Axiom~A (uniformly hyperbolic) with SRB measures $\nu_\eps$.
More generally, Kifer considers the case where
$x\mapsto\nu_{x,0}$ is sufficiently regular so that $\bar a$ is Lipschitz,
and gives necessary and sufficient conditions for averaging to hold.
However, the only situations where the conditions in~\cite{Kifer04,Kifer05} are verified are in the Axiom~A case, even though it is hoped~\cite{Kifer05} that the conditions are verifiable for nonuniformly hyperbolic examples.
Analogous results for the discrete time case are obtained in~\cite{Kifer03}.
See also~\cite[Theorem~5]{Dolgopyat05} for certain partially hyperbolic fast vector fields.

Here, we consider an intermediate class of examples that lies between the classical uncoupled situation and the fully coupled systems of~\cite{Anosov60, Kifer04}, namely families of skew products of the form
  \begin{align} \label{eq-intro}
\dot x   =  \eps a(x,y,\eps), \quad
\dot y   =  g(y,\eps),
  \end{align}
with distinguished family of ergodic invariant measures $\nu_\eps$ and averaged vector field $\bar a(x)=\int a(x,y,0)\,d\nu_0(y)$.
Notice that in this way we avoid issues concerned with the regularity of the averaged vector field $\bar a$, but we still have to deal with the $\eps$-dependence of the measures $\nu_\eps$ as well as the fast vector fields.
In other words, linear response (differentiability) of the invariant measures is replaced by statistical stability (weak convergence) which is more tractable.  Indeed one aspect of the general framework in this paper is that our averaging theorems hold in a similar generality to the methods of Alves \& Viana~\cite{Alves04,AlvesViana02} for proving statistical stability.

Hence, we obtain results on averaging and rates of averaging for a large class of families of skew products~\eqref{eq-intro}, going far beyond the uniformly hyperbolic setting, both in discrete and continuous time.  Our examples include situations where the fast dynamics is given by intermittent maps with arbitrarily poor mixing properties, unimodal maps where linear response fails, and flows built as suspensions over such maps.

We obtain results also on rates of averaging.  In the very simple situation
$\dot x=\eps a(y)$, $\dot y=g(y)$, where $g$ is a uniformly expanding semiflow or uniformly hyperbolic flow, it is easily seen that the optimal rate of averaging in $L^1$ is $O(\eps^{1/2})$.  For systems of the form~\eqref{eq-intro}, we often obtain the essentially optimal rate $O(\eps^{\frac12-})$.\footnote{$q-$ denotes $q-a$ for all $a>0$.}

We have chosen to focus in this paper on the case of noninvertible dynamical systems.  In this situation, the measures of interest are absolutely continuous and we are able to present the main ideas without going into the technical issues presented by dealing with  nonabsolutely continuous measures as required in the invertible setting.   The invertible case will be covered in a separate paper.

Even in the noninvertible setting, our results depend strongly on extensions and clarifications of the classical second order averaging theorem.  These prerequisites are presented in Appendix~\ref{sec-second} and may be of independent interest.

\vspace{1ex}
The remainder of the paper is organised as followed.
In Section~\ref{sec-gen}, we set up the averaging problem for families of fast-slow skew product systems in the discrete time case, leading to a general result Theorem~\ref{thm-gen} for such systems.   In Section~\ref{sec-UE}, we show that Theorem~\ref{thm-gen} leads easily to averaging when the fast dynamics is a family of uniformly expanding maps.
Section~\ref{sec-ufNUE} is the heart of the paper and deals with the case when the fast dynamics is a family of nonuniformly expanding maps.
Our main examples are presented in Section~\ref{sec-egNUE}.
In Section~\ref{sec-flow}, we show how the continuous time case reduces to the discrete time case.   In Section~\ref{sec-ae}, we present a simple example to show that almost sure convergence fails for families of skew products.

\section{General averaging theorem for families of skew products}
\label{sec-gen}

Let $T_\eps:M\to M$, $0\le\eps<\eps_0$, be a family of transformations
defined on a measurable space $M$.
For each $\eps\in[0,\eps_0)$, let $\nu_\eps$ denote 
a $T_\eps$-invariant ergodic probability measure on $M$.

We consider the family of fast-slow systems
  \begin{align} \label{eq-fastslow} \nonumber
\x_{n+1}  & = \x_n+\eps a(\x_n,\y_n,\eps), \quad \x_0=x_0,   \\
\y_{n+1} &  = T_\eps\y_n, \quad \y_0=y_0, 
  \end{align}
where the initial condition $\x_0=x_0$ is fixed throughout.
The initial condition $y_0\in M$ is chosen randomly with respect to various measures that are specified in the statements of the results.
Here $a:\R^d \times M\times[0,\eps_0)\to \R^d$
is a family of functions satisfying certain regularity hypotheses.

  Define $\bar a(x)=\int_M a(x,y,0)\,d\nu_0(y)$ and consider
the ODE
\begin{align} \label{eq-ODE}
\dot X= \bar a(X),\enspace X(0)=x_0.
\end{align}
We are interested in the convergence, and rate of convergence, of the slow variables $\x_n$, suitably rescaled, to solutions $X(t)$ of this ODE.
More precisely, 
  define $\hatx:[0,1]\to\R^d$ by setting $\hatx(t)=\x_{[t/\eps]}$.
We study convergence of the difference 
\[
\z=\sup_{t\in[0,1]}|\hatx(t)-X(t)|.
\]

\begin{rmk}
The restriction to the time interval $[0,1]$ entails no loss of generality: the results apply to arbitrary bounded intervals by rescaling $\eps$.
\end{rmk}

\paragraph{Regularity assumptions}
Given a function $g:\R^d\to\R^n$, we define
$\|g\|_\Lip=\max\{|g|_\infty,\Lip\, g\}$ where
$\Lip\,g=\sup_{x\neq x'}|g(x)-g(x')|/|x-x'|$ 
and $|x-x'|=\max_{i=1,\dots,n}|x_i-x'_i|$.

In this section, and also in Appendix~\ref{sec-second}, we consider functions $g:\R^d\times M\times[0,\eps_0)\to\R^n$
	where there is no metric structure assumed on $M$.
In that case, $\|g\|_\Lip=\sup_{y\in M}\sup_{\eps\in[0,\eps_0)}\|g(\cdot,y,\eps)\|_\Lip$.
If $E\subset \R^d$, then 
$\|g|_E\|_\Lip$ is computed by restricting to $x,x'\in E$ (and $y\in M$, $\eps\in[0,\eps_0)$).

Throughout, we write $D=\frac{\partial}{\partial x}$.
If $g:\R^d\times M\times[0,\eps_0)\to\R^n$, then
$Dg:\R^d\times M\times[0,\eps_0)\to \R^{n\times d}$ and 
$\|Dg\|_\Lip$ is defined accordingly.
Similarly for $\|Dg|_E\|_\Lip$ when $E\subset\R^d$.

Below, $L_1,L_2,L_3\ge1$ are constants.
We require that $a$ is globally Lipschitz in $x$:
\begin{align} \label{eq-L1}
\| a\|_\Lip \leq L_1.
\end{align}
Set $E=\{x\in\R^d:|x-x_0|\le L_1\}$.  
We assume that $Da|_E$ is Lipschitz in $x$:
\begin{align} \label{eq-L2}
\| Da|_E\|_\Lip \leq L_2.
\end{align}
and that
\begin{align} \label{eq-L3}
\sup_{x\in E}\sup_{y\in M}|a(x,y,\eps)-a(x,y,0)|\le L_3\eps.
\end{align}
In the sequel we let
$L=\max\{L_1,L_2,L_3\}$.

\subsection{Order functions and a general averaging theorem}
\label{sec-order}

  Define $\bar a(x,\eps)=\int_M a(x,y,\eps)\,d\nu_\eps(y)$ and let
$v_{\eps,x}(y)=a(x,y,\eps)-\bar a(x,\eps)$.
Set
\begin{align*}
  \delta_{1,\eps} & = \sup_{x\in E} \sup_{1 \leq n \leq 1/\eps} 
  \eps|v_{\eps, x, n}|, \quad \text{where} \quad 
v_{\eps, x, n}= \sum_{j=0}^{n-1} v_{\eps,x}\circ T_\eps^j, \\
\delta_{2,\eps} & = \sup_{x\in E}\sup_{1\leq n\leq 1/\epsilon}\epsilon|V_{\eps,x,n}|,
\quad\text{where} \quad
V_{\eps, x, n}= \sum_{j=0}^{n-1} (Dv_{\eps,x})\circ T_\eps^j.
\end{align*}
Then we define the \emph{order function} $\delta_\eps=\delta_{1,\eps}+\delta_{2,\eps}:M\to\R$.

Finally, define
$S_\eps  = \sup_{x\in E}|\int_M a(x,y,0)\,(d\nu_\eps-d\nu_0)(y)|+\eps$.

\begin{thm} \label{thm-gen}  
If $\delta_\eps\le\frac12$,
then $\z  \le 6e^{2L}(\delta_\eps+S_\eps)$.
\end{thm}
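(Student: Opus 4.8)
The plan is to compare $\x_n$ with the averaged solution evaluated at the matching time $X(n\eps)$, and to close the comparison with a discrete Gronwall inequality. Iterating the recursion gives $\x_n=x_0+\eps\sum_{j=0}^{n-1}a(\x_j,\y_j,\eps)$, and since $|a|_\infty\le L_1$ both $\x_n$ (for $n\le1/\eps$) and $X(t)$ (for $t\le1$) remain in the convex set $E$, so the bounds \eqref{eq-L1}--\eqref{eq-L3} apply all along the orbit. I would decompose $a(\x_j,\y_j,\eps)=\bar a(\x_j,\eps)+v_{\eps,\x_j}(\y_j)$, splitting the sum into a drift part built from $\bar a(\cdot,\eps)$ and a fluctuation part $\eps\sum_j v_{\eps,\x_j}(\y_j)$. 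Writing $\y_j=T_\eps^j y_0$ with $y_0$ fixed, every estimate below — and in particular the order function — is pointwise in $y_0$, matching the fact that $\delta_\eps:M\to\R$; the conclusion is the corresponding pointwise bound on $\z$.

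The heart of the matter, and the main obstacle, is the fluctuation sum $\eps\sum_{j=0}^{n-1}v_{\eps,\x_j}(\y_j)$: although $v_{\eps,x}$ has zero $\nu_\eps$-mean, the argument $\x_j$ drifts by $O(\eps)$ per step, so the terms do not simply telescope. This is exactly what the order function controls, via summation by parts. Setting $F(x,k)=v_{\eps,x,k}(y_0)=\sum_{i=0}^{k-1}v_{\eps,x}(T_\eps^i y_0)$, so that $v_{\eps,x}(T_\eps^j y_0)=F(x,j+1)-F(x,j)$ and $F(x,0)=0$, an Abel rearrangement yields
\[
\sum_{j=0}^{n-1}v_{\eps,\x_j}(\y_j)=F(\x_{n-1},n)+\sum_{k=1}^{n-1}\bigl(F(\x_{k-1},k)-F(\x_k,k)\bigr).
\]
The boundary term satisfies $\eps|F(\x_{n-1},n)|\le\delta_{1,\eps}$ by definition. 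For each difference I would apply the fundamental theorem of calculus in $x$ to write $F(\x_{k-1},k)-F(\x_k,k)=\int_0^1 V_{\eps,\xi,k}(y_0)\,d\theta\cdot(\x_{k-1}-\x_k)$ with $\xi=\x_k+\theta(\x_{k-1}-\x_k)\in E$; since $\eps|V_{\eps,\xi,k}(y_0)|\le\delta_{2,\eps}$ and $|\x_{k-1}-\x_k|\le\eps L_1$, each difference is at most $L_1\delta_{2,\eps}$. Multiplying by $\eps$ and summing the at most $1/\eps$ terms, the whole fluctuation contributes at most $\delta_{1,\eps}+L_1\delta_{2,\eps}\le L\delta_\eps$.

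The residual errors are routine. Replacing $\bar a(\cdot,\eps)$ by $\bar a=\bar a(\cdot,0)$ costs $|\bar a(x,\eps)-\bar a(x)|\le L_3\eps+\sup_{x\in E}|\int_M a(x,y,0)\,(d\nu_\eps-d\nu_0)(y)|$ by \eqref{eq-L3}, which after multiplication by $\eps n\le1$ is absorbed into a constant multiple of $S_\eps$ (using $S_\eps\ge\eps$). The Riemann-sum error $\int_0^{n\eps}\bar a(X(s))\,ds-\eps\sum_{j=0}^{n-1}\bar a(X(j\eps))$ is $O(L_1^2\eps)$, since $\bar a$ is $L_1$-Lipschitz and $|\dot X|\le L_1$, hence also $O(S_\eps)$. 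Collecting everything, $e_n=|\x_n-X(n\eps)|$ obeys $e_n\le B+\eps L_1\sum_{j=0}^{n-1}e_j$ with $B\le L\delta_\eps+c\,S_\eps$ and $c=O(L^2)$, so discrete Gronwall gives $e_n\le Be^{L_1}\le Be^{L}$. Finally, the elementary inequalities $L\le e^L$ and $L^2\le e^L$ collapse the prefactors, and bookkeeping of the accumulated constants turns this into $\z\le 6e^{2L}(\delta_\eps+S_\eps)$. The hypothesis $\delta_\eps\le\frac12$ simply places us in the regime where this is informative: if instead $\delta_\eps(y_0)>\frac12$ the bound is immediate, since $\z\le2L_1$ whereas the right-hand side already exceeds $3e^{2L}$.
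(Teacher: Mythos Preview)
Your argument is correct, and it takes a genuinely different route from the paper's proof in Appendix~\ref{sec-second}. The paper follows the classical second-order averaging scheme of Sanders: it introduces the normalised corrector $u(x,n)=(\eps/\delta_\eps)\sum_{j<n}(a(x,y_j)-\bar a(x))$ and the near-identity change of variables $w_n=x_n-\delta_\eps u(w_{n-1},n)$, then shows inductively (Lemma~\ref{lem-w}) that $w_n$ approximately satisfies the averaged recursion, compares $w_n$ with the Euler sequence $z_n$, and finally compares $z_n$ with $X(n\eps)$. Your Abel summation on $\sum_j v_{\eps,\x_j}(\y_j)$ achieves the same cancellation directly: the boundary term is controlled by $\delta_{1,\eps}$ and the telescoping differences by $\delta_{2,\eps}$, with no auxiliary variable and no induction.

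Two points of comparison are worth noting. First, the paper's inductive step in Lemma~\ref{lem-w} genuinely uses the hypothesis $\delta_\eps\le\tfrac12$ (to bound $4L\eps\delta_\eps^2\le 2L\eps\delta_\eps$), whereas your estimate goes through for any $\delta_\eps$; as you observe, the hypothesis merely ensures the bound is nontrivial. Second, the paper's normalisation $u=\eps F/\delta_\eps$ tacitly assumes $\delta_\eps>0$, while your version has no such artefact. On the other hand, the paper's presentation makes the connection to the averaging literature explicit and separates the $\eps$-dependent comparison (handled at the end via $X_\eps$ versus $X$) from the core averaging step, which some readers may find conceptually cleaner. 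Your direct Gronwall on $e_n=|\x_n-X(n\eps)|$ merges these steps and, with the bookkeeping you sketch, yields the stated constant with room to spare.
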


The proof of Theorem~\ref{thm-gen} is postponed to Appendix~\ref{sec-second}.

\begin{rmk}  
For averaging without rates, it suffices instead of condition~\eqref{eq-L3} that
$\lim_{\eps\to0}|a(x,y,\eps)-a(x,y,0)|=0$ for all $x\in E$, $y\in M$.
\end{rmk}

\begin{rmk} \label{rmk-ae}
	As shown in Section~\ref{sec-ae}, almost sure convergence in the averaging theorem is not likely to hold for fast-slow systems of type~\eqref{eq-fastslow}.  Hence we consider convergence in $L^q$ with respect to certain absolutely continuous probability measures on $M$.  Since $\z\le 2L$ and $\delta_\eps\le 4L$, convergence in $L^p$ is equivalent to convergence in $L^q$ for all $p,q\in(0,\infty)$.  For brevity, we restrict statements to convergence in $L^1$ except when speaking of rates.
\end{rmk}

\begin{rmk} \label{rmk-L2}
If condition~\eqref{eq-L2} fails, then all of our results without rates go through unchanged.  Moreover, it is still possible to obtain results with rates but usually with weaker rates of convergence (the best rates are $O(\eps^{\frac14-})$ instead of $O(\eps^{\frac12-})$).  These results are obtained by using $\delta_{1,\eps}$ (first order averaging) instead of $\delta_\eps=\delta_{1,\eps}+\delta_{2,\eps}$ (second order averaging) and can be found in the first (much longer) version of this paper~\cite{KKMsub1}.
\end{rmk}

According to Theorem~\ref{thm-gen}, results on averaging reduce to estimating the scalar quantity $S_\eps$ and the random variable $\delta_\eps=\delta_\eps(y_0)$.  These quantities are discussed below in Subsections~\ref{sec-ss} and~\ref{sec-delta} respectively.

\subsection{Statistical stability}
\label{sec-ss}
In this subsection, we suppose that $M$ is a topological space and that the $\sigma$-algebra of measurable sets is the $\sigma$-algebra of Borel sets.
Recall that the family of measures $\nu_\eps$ is {\em statistically stable} at $\eps=0$ if $\nu_0$ is the weak limit of $\nu_\eps$ as $\eps\to0$
($\nu_\eps\to_w\nu_0$).  This means that $\int_M \phi\,d\nu_\eps\to
\int_M \phi\,d\nu_0$ for all continuous bounded functions $\phi:M\to\R$.

In the noninvertible setting, often a stronger property known as strong statistical stability holds.  Let $m$ be a reference measure on $M$ and suppose that
$\nu_\eps$ is absolutely continuous with respect to $m$ for all $\eps\ge0$.
Then $\nu_0$ is {\em strongly statistically stable} if the densities
$\rho_\eps=d\nu_\eps/dm$ satisfy $\lim_{\eps\to0}R_\eps=0$ where
$R_\eps=\int_M|\rho_\eps-\rho_0|\,dm$.
We note that $S_\eps\le LR_\eps+\eps$.

\begin{prop} \label{prop-ss}
If $\nu_\eps\to_w\nu_0$, then $\lim_{\eps\to0}S_\eps=0$.
\end{prop}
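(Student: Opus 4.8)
The plan is to reduce $S_\eps$ to the supremum over $x\in E$ of the difference of integrals $\int_M a(x,y,0)\,d\nu_\eps(y)$, discard the harmless additive $\eps$, establish convergence pointwise in $x$ directly from the hypothesis $\nu_\eps\to_w\nu_0$, and then upgrade this to convergence that is uniform over the compact set $E$ by invoking the uniform Lipschitz control supplied by~\eqref{eq-L1}.

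First I would write $F_\eps(x)=\int_M a(x,y,0)\,d\nu_\eps(y)$, so that $S_\eps=\sup_{x\in E}|F_\eps(x)-F_0(x)|+\eps$; since $\eps\to0$ it suffices to prove $\sup_{x\in E}|F_\eps(x)-F_0(x)|\to0$. Fix $x\in E$. By~\eqref{eq-L1} the map $y\mapsto a(x,y,0)$ is bounded (indeed $|a|_\infty\le L_1$), and in the topological setting it is continuous in $y$, so it is an admissible test function for the weak convergence $\nu_\eps\to_w\nu_0$; this gives $F_\eps(x)\to F_0(x)$ as $\eps\to0$ for each individual $x$.

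The main obstacle is promoting this pointwise statement to the uniform-in-$x$ convergence that $S_\eps$ demands. Here I would exploit the fact that every $F_\eps$ is Lipschitz in $x$ with one and the same constant $L_1$: for $x,x'\in E$ we have $|F_\eps(x)-F_\eps(x')|\le\int_M|a(x,y,0)-a(x',y,0)|\,d\nu_\eps(y)\le L_1|x-x'|$, uniformly in $\eps$. Since $E$ is a closed ball in $\R^d$ and hence compact, a standard equicontinuity argument closes the gap: given $\eta>0$, choose a finite $\eta$-net $x_1,\dots,x_N$ of $E$; by the previous step there is $\eps_1>0$ with $|F_\eps(x_i)-F_0(x_i)|<\eta$ for all $i$ and all $\eps<\eps_1$; then for arbitrary $x\in E$, selecting $x_i$ within distance $\eta$ and applying the uniform Lipschitz bound to both $F_\eps$ and $F_0$ yields $|F_\eps(x)-F_0(x)|\le (2L_1+1)\eta$ for $\eps<\eps_1$. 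Hence $\limsup_{\eps\to0}\sup_{x\in E}|F_\eps(x)-F_0(x)|\le(2L_1+1)\eta$, and letting $\eta\to0$ gives $\sup_{x\in E}|F_\eps(x)-F_0(x)|\to0$, so $S_\eps\to0$.

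I would remark that the only role played by the topological structure on $M$ is to make sense of weak convergence and of the continuity of $a(x,\cdot,0)$ needed for the pointwise step; all of the uniformity in $x$ is extracted for free from the single hypothesis~\eqref{eq-L1} together with the compactness of $E$, so no further regularity in $y$ or dependence on $\eps$ enters the argument.
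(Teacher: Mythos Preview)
Your proof is correct and follows essentially the same route as the paper: pointwise convergence in $x$ from weak convergence, then an upgrade to uniform convergence over the compact set $E$ via the uniform Lipschitz bound in $x$ supplied by~\eqref{eq-L1}. The paper phrases the compactness step as a finite cover by balls rather than as a finite $\eta$-net, but the arguments are the same.
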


\begin{proof}
Let $A_\eps(x)=\int_M a(x,y,0)\,d\nu_\eps(y)-\int_M a(x,y,0)\,d\nu_0(y)$.
Let $\delta>0$.
Since $\nu_\eps\to_w\nu_0$, we have that $A_\eps(x)\to0$ for each $x$, so
there exists $\eps_x>0$
such that $|A_\eps(x)|<\delta$ for all $\eps\in(0,\eps_x)$.
Moreover, $|A_\eps(z)|<2\delta$ for all $\eps\in(0,\eps_x)$ and
$z\in B_{\delta/(2L)}(x)$.  Since $E$ is covered by finitely many such balls
$B_{\delta/(2L)}(x)$, there exists $\bar\eps>0$ such that
$\sup_{x\in E}|A_\eps(x)|<2\delta$ for all $\eps\in(0,\bar\eps)$.
 Hence $\int_M a(x,y,0)\,(d\nu_\eps-d\nu_0)(y)$ converges to zero uniformly in $x$.
\end{proof}

Hence for proving averaging theorems, statistical stability takes care of the term $S_\eps$ in Theorem~\ref{thm-gen}.
In specific examples, we are able to appeal to results on statistical stability with rates, yielding effective estimates.  

\begin{prop} \label{prop-gen}
Let $q\geq1$.  There is a constant $C>0$ such that
\begin{align*}
|\z|_{L^q(\nu_\eps)} & \le C(|\delta_\eps|_{L^q(\nu_\eps)}+S_\eps), 
\end{align*}
for all $\eps\in[0,\eps_0)$.

If the measures $\nu_\eps$ are absolutely continuous with respect to $m$, then
there is a constant $C>0$ such that
\begin{align*}
|\z|_{L^q(\nu_0)} & \le C(|\delta_\eps|_{L^q(\nu_\eps)}+R_\eps^{1/q}+\eps),
\end{align*}
for all $\eps\in[0,\eps_0)$.
\end{prop}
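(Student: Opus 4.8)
The plan is to combine Theorem~\ref{thm-gen} with the deterministic a priori bounds $\z\le 2L$ and $\delta_\eps\le 4L$ from Remark~\ref{rmk-ae}, splitting $M$ according to whether or not $\delta_\eps\le\frac12$. On the good set $G=\{y_0\in M:\delta_\eps(y_0)\le\tfrac12\}$, Theorem~\ref{thm-gen} applies and yields the pointwise estimate $\z\le 6e^{2L}(\delta_\eps+S_\eps)$. On the complementary bad set $M\setminus G$, where Theorem~\ref{thm-gen} is unavailable, one instead has $\delta_\eps>\frac12$, so a Chebyshev-type estimate bounds the measure of this set by the $L^q$ norm of $\delta_\eps$, and the crude bound $\z\le 2L$ controls the integrand there.

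Concretely, for the first estimate I would write
\begin{align*}
|\z|_{L^q(\nu_\eps)}^q = \int_{G}\z^q\,d\nu_\eps + \int_{M\setminus G}\z^q\,d\nu_\eps.
\end{align*}
On $G$ the integrand is at most $(6e^{2L})^q(\delta_\eps+S_\eps)^q$, while on $M\setminus G$ we bound $\z^q\le(2L)^q$ and use $\nu_\eps(M\setminus G)=\nu_\eps(\delta_\eps>\tfrac12)\le 2^q\int_M\delta_\eps^q\,d\nu_\eps$, since $\delta_\eps>\tfrac12$ forces $(2\delta_\eps)^q>1$. Invoking the elementary inequalities $(a+b)^q\le 2^{q-1}(a^q+b^q)$ and $(a^q+b^q)^{1/q}\le a+b$ for $a,b\ge0$, $q\ge1$, the two contributions combine to give $|\z|_{L^q(\nu_\eps)}\le C(|\delta_\eps|_{L^q(\nu_\eps)}+S_\eps)$ with $C$ depending only on $L$ and $q$, which is the first assertion.

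For the second assertion I would transfer the $L^q$ norm from $\nu_0$ to $\nu_\eps$ via the densities. Writing $\rho_\eps=d\nu_\eps/dm$, $\rho_0=d\nu_0/dm$, and again using $\z\le 2L$,
\begin{align*}
|\z|_{L^q(\nu_0)}^q=\int_M\z^q\rho_0\,dm\le\int_M\z^q\rho_\eps\,dm+\int_M\z^q|\rho_0-\rho_\eps|\,dm\le|\z|_{L^q(\nu_\eps)}^q+(2L)^qR_\eps.
\end{align*}
Taking $q$-th roots and applying the first assertion gives $|\z|_{L^q(\nu_0)}\le C(|\delta_\eps|_{L^q(\nu_\eps)}+S_\eps)+2LR_\eps^{1/q}$. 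It remains to absorb $S_\eps$: since $S_\eps\le LR_\eps+\eps$ (as noted before Proposition~\ref{prop-ss}) and $R_\eps\le2$ forces $R_\eps\le 2R_\eps^{1/q}$, we obtain $S_\eps\le 2LR_\eps^{1/q}+\eps$, which yields the stated bound after enlarging $C$.

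The main obstacle is conceptual rather than computational: Theorem~\ref{thm-gen} holds only on the event $\{\delta_\eps\le\frac12\}$, so the argument hinges on showing that the excluded event is negligible at the level of the $L^q$ norm. This is exactly where the Chebyshev estimate $\nu_\eps(\delta_\eps>\tfrac12)\le 2^q|\delta_\eps|_{L^q(\nu_\eps)}^q$ enters, and it explains why $\z$ is controlled by the $L^q$ norm of the same order function $\delta_\eps$ rather than by a weaker average. The density-change step for the second assertion is then routine, relying only on $\z\le 2L$ and the $L^1$ closeness $R_\eps$ of $\rho_\eps$ to $\rho_0$.
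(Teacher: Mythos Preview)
Your proposal is correct and follows essentially the same approach as the paper: split $M$ into $\{\delta_\eps\le\tfrac12\}$ (where Theorem~\ref{thm-gen} applies) and its complement (controlled by Chebyshev and the a~priori bound $\z\le 2L$), then for the second estimate pass from $\nu_0$ to $\nu_\eps$ via the $L^1$ density difference $R_\eps$ and absorb $S_\eps$ using $S_\eps\le LR_\eps+\eps$. The paper's bookkeeping is slightly different (it keeps everything inside $\int_M(\delta_\eps+S_\eps)^q\,d\nu_\eps$ rather than invoking $(a+b)^q\le 2^{q-1}(a^q+b^q)$ explicitly), but the structure and ingredients are identical.
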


\begin{proof}
Let $A_\eps=\{y\in M: \delta_\eps(y)\le\frac12\}$.
Then Theorem~\ref{thm-gen} applies on $A_\eps$ and
\begin{align*}
 \int_M\z^q\,d\nu_\eps & =
\int_{M\setminus A_\eps} \!\!\! \z^q\,d\nu_\eps+
\int_{A_\eps}\z^q\,d\nu_\eps 
 \le (2L)^q\nu_\eps(\delta_\eps>{\SMALL\frac12})+(6e^{2L})^q\int_M(\delta_\eps+S_\eps)^q\, d\nu_\eps \\
& \le  (4L)^q\int_M \delta_\eps^q\,d\nu_\eps
+(6e^{2L})^q\int_M (\delta_\eps+S_\eps)^q\,d\nu_\eps
\le (12e^{2L})^q\int_M (\delta_\eps+S_\eps)^q\,d\nu_\eps.
\end{align*}
Hence 
\[
|\z|_{L^q(\nu_\eps)}\le 12e^{2L}|\delta_\eps+S_\eps|_{L^q(\nu_\eps)}
\le 12e^{2L}|\delta_\eps|_{L^q(\nu_\eps)}+12e^{2L}S_\eps,
\]
yielding the first estimate.

Next,
\begin{align*}
\int_M\z^q\,d\nu_0 & = \int_M\z^q\,d\nu_\eps+ \int_M\z^q\,(d\nu_0-d\nu_\eps)
\le (12e^{2L})^q\int_M (\delta_\eps+S_\eps)^q\,d\nu_\eps+(2L)^qR_\eps \\
& \le (12e^{2L})^q\int_M (\delta_\eps+LR_\eps+\eps)^q\,d\nu_\eps+(2L)^qR_\eps \\
& \le (12Le^{2L})^q \int_M (\delta_\eps+R_\eps^{1/q}+\eps)^q\,d\nu_\eps+(2L)^qR_\eps  \\
& \le (24Le^{2L})^q \int_M (\delta_\eps+R_\eps^{1/q}+\eps)^q\,d\nu_\eps.
\end{align*}
Hence
\[
|\z|_{L^q(\nu_0)}\le 24Le^{2L} |\delta_\eps+R_\eps^{1/q}+\eps|_{L^q(\nu_\eps)}
\le 24Le^{2L}(|\delta_\eps|_{L^q(\nu_\eps)}+R_\eps^{1/q}+\eps),
\]
yielding the second estimate.
\end{proof}

\begin{cor} \label{cor-gen}
(a)  Assume statistical stability and that $\lim_{\eps\to0}\int_M \delta_\eps\,d\nu_\eps=0$.
Then $\lim_{\eps\to0}\int_M\z\,d\nu_\eps=0$.

\vspace{1ex}\noindent(b)
Assume in addition strong statistical stability and that $\mu$ is a probability measure on $M$ with $\mu\ll \nu_0$.
Then $\lim_{\eps\to0}\int_M\z\,d\mu=0$.
\end{cor}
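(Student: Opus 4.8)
The plan is to read both parts straight off Proposition~\ref{prop-gen}, which already repackages Theorem~\ref{thm-gen} into $L^q$ bounds, combined with Proposition~\ref{prop-ss}. Throughout I take $q=1$, and recall from Remark~\ref{rmk-ae} that $0\le\z\le2L$, so that $|\z|_{L^1(\nu_\eps)}=\int_M\z\,d\nu_\eps$ and likewise for $\nu_0$.

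For part~(a), statistical stability gives $\lim_{\eps\to0}S_\eps=0$ by Proposition~\ref{prop-ss}, while the hypothesis is exactly $\lim_{\eps\to0}|\delta_\eps|_{L^1(\nu_\eps)}=0$. Substituting into the first estimate of Proposition~\ref{prop-gen},
\[
\int_M\z\,d\nu_\eps\le C\big(|\delta_\eps|_{L^1(\nu_\eps)}+S_\eps\big),
\]
and letting $\eps\to0$ gives the conclusion at once.

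For part~(b) I would first upgrade this to $L^1(\nu_0)$-convergence. Strong statistical stability means $\lim_{\eps\to0}R_\eps=0$, so the second estimate of Proposition~\ref{prop-gen},
\[
\int_M\z\,d\nu_0\le C\big(|\delta_\eps|_{L^1(\nu_\eps)}+R_\eps+\eps\big),
\]
yields $\lim_{\eps\to0}\int_M\z\,d\nu_0=0$; that is, $\z\to0$ in $L^1(\nu_0)$. It remains to pass from $\nu_0$ to an arbitrary $\mu\ll\nu_0$, and this is the only step requiring real care: writing $h=d\mu/d\nu_0$, the density $h\in L^1(\nu_0)$ need not be bounded, so $L^1(\nu_0)$-convergence does not transfer for free.

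I would handle this by truncation, using the uniform bound $\z\le2L$. Given $\eta>0$, choose $N$ with $\int_{\{h>N\}}h\,d\nu_0<\eta$, which is possible as $h$ is integrable, and split
\[
\int_M\z\,d\mu=\int_M\z\,h\,d\nu_0\le N\int_M\z\,d\nu_0+2L\int_{\{h>N\}}h\,d\nu_0\le N\int_M\z\,d\nu_0+2L\eta.
\]
As $\eps\to0$ the first term vanishes by the $L^1(\nu_0)$-convergence just established, leaving the bound $2L\eta$; since $\eta$ is arbitrary, $\lim_{\eps\to0}\int_M\z\,d\mu=0$. An equivalent route would extract from $L^1(\nu_0)$-convergence a subsequence converging $\nu_0$-a.e., hence $\mu$-a.e.\ by $\mu\ll\nu_0$, and apply dominated convergence with dominating constant $2L$; I prefer the truncation since it avoids passing to subsequences and delivers the full limit directly.
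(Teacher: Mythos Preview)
Your proof is correct. Part~(a) and the reduction of part~(b) to the case $\mu=\nu_0$ are exactly what the paper does, reading the conclusions off Proposition~\ref{prop-gen} and Proposition~\ref{prop-ss}.

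For the passage from $\nu_0$ to general $\mu\ll\nu_0$, you take a different route from the paper. The paper argues by contradiction: assuming $\int_M z_{\eps_k}\,d\mu\to b>0$ along a subsequence, it uses $L^1(\nu_0)$-convergence to extract a further subsequence along which $z_{\eps_k}\to0$ $\nu_0$-a.e.\ (hence $\mu$-a.e.), and then applies bounded convergence to reach a contradiction. This is precisely the ``equivalent route'' you sketch and set aside. Your truncation argument is a genuine alternative: it avoids subsequences entirely and gives the full limit directly, at the cost of making explicit use of the integrability of $h=d\mu/d\nu_0$ to cut off its tail. Both arguments ultimately rest on the uniform bound $\z\le2L$; yours is slightly more elementary and arguably cleaner for this purpose.
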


\begin{proof}
Part (a), and part~(b) in the special case $\mu=\nu_0$, are immediate from Proposition~\ref{prop-gen}.
To prove the general case of part~(b),
suppose for contradiction that $\int_M z_{\eps_k}\,d\mu\to b>0$ along some subsequence $\eps_k\to0$.  
Since 
$\int_M z_{\eps_k}\,d\nu_0\to 0$, by passing
without loss to a further subsequence, we can suppose also that 
$ z_{\eps_k}\to 0$ on a set of full measure with respect to $\nu_0$ and hence
with respect to $\mu$.
By the bounded convergence theorem, 
$\int_M z_{\eps_k}\,d\mu\to 0$ which is the desired contradiction.
\end{proof}

The next result is useful in situations where $\nu_0$ is absolutely continuous but whose support is not the whole of $M$.

\begin{cor} \label{cor-dm}
Assume strong statistical stability and that
$\lim_{\eps\to0}\int_M \delta_\eps\,d\nu_0=0$.
  Suppose further that each $T_\eps$ is nonsingular with respect to $m$ and that
for almost every $y\in M$, there exists $N\ge1$ such that
$T_\eps^Ny\in\supp\nu_0$ for all $\eps\in[0,\eps_0)$.    Then 
$\lim_{\eps\to0}\int_M\z\,d\mu=0$
for every probability measure $\mu$ on $M$ with $\mu\ll m$.
\end{cor}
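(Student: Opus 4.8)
The plan is to reduce the statement to a convergence estimate for the order function $\delta_\eps$ and then transport the relevant mass into $\supp\nu_0$ by the fast dynamics, using the support hypothesis and nonsingularity. Throughout write $h=d\mu/dm$ and let $P_\eps$ denote the transfer operator of $T_\eps$ with respect to $m$. First I would record two consequences of the hypotheses. Since $\delta_\eps\le 4L$ (Remark~\ref{rmk-ae}) and, under strong statistical stability, $\nu_\eps\to\nu_0$ in total variation with $\bigl|\int_M\delta_\eps\,d\nu_\eps-\int_M\delta_\eps\,d\nu_0\bigr|\le 4L\,R_\eps\to0$, the assumption $\int_M\delta_\eps\,d\nu_0\to0$ also gives $\int_M\delta_\eps\,d\nu_\eps\to0$. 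Next, exactly as in the proof of Proposition~\ref{prop-gen}, combining Theorem~\ref{thm-gen} with the bound $\z\le 2L$ yields the pointwise inequality $\z\le 12e^{2L}(\delta_\eps+S_\eps)$ on all of $M$. As $S_\eps\le LR_\eps+\eps\to0$, integrating against the probability measure $\mu$ gives $\int_M\z\,d\mu\le 12e^{2L}\bigl(\int_M\delta_\eps\,d\mu+S_\eps\bigr)$, so it suffices to prove $\int_M\delta_\eps\,d\mu\to0$ for every $\mu\ll m$.

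The mechanism for moving mass into $\supp\nu_0$ is a shift estimate for $\delta_\eps$. For $n\ge N$, splitting the Birkhoff sum $v_{\eps,x,n}=\sum_{j=0}^{n-1}v_{\eps,x}\circ T_\eps^j$ at $j=N$ gives $v_{\eps,x,n}(y)=v_{\eps,x,N}(y)+v_{\eps,x,\,n-N}(T_\eps^Ny)$, and likewise for $V_{\eps,x,n}$. Bounding the first $N$ terms via $|v_{\eps,x}|_\infty\le 2L$ and $|Dv_{\eps,x}|_\infty\le 2L$, I obtain
\[
\delta_\eps(y)\le \delta_\eps(T_\eps^Ny)+4LN\eps\qquad(y\in M,\ N\ge1).
\]

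To exploit this, fix $N_0\ge1$ and note that, because the support condition holds for all $\eps$ simultaneously, the sets $\{y:T_\eps^Ny\in\supp\nu_0\ \forall\eps\}$ are independent of $\eps$; partition (mod $m$-null sets) $G_{N_0}=\{y:\exists\,N\le N_0\text{ with }T_\eps^Ny\in\supp\nu_0\ \forall\eps\}$ into the first-entry sets $B_0,\dots,B_{N_0}$. On $M\setminus G_{N_0}$ I bound $\delta_\eps\le 4L$, and the support hypothesis gives $\mu(M\setminus G_{N_0})\to0$ as $N_0\to\infty$. On each $B_N$ the shift estimate together with nonsingularity of $T_\eps$ (which keeps every pushforward of an $m$-a.c.\ measure $m$-a.c.) gives
\[
\int_{G_{N_0}}\delta_\eps\,d\mu\le \int_{\supp\nu_0}\delta_\eps\,d\kappa_\eps+4LN_0\eps,\qquad \kappa_\eps=\sum_{N=0}^{N_0}(T_\eps^N)_*(\mathbf 1_{B_N}\mu),
\]
where $\kappa_\eps$ is a subprobability measure supported on $\supp\nu_0$ with $m$-density $k_\eps=\sum_{N\le N_0}P_\eps^N(\mathbf 1_{B_N}h)$.

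It then remains to show $\int_{\supp\nu_0}\delta_\eps\,d\kappa_\eps\to0$ for each fixed $N_0$, and this is the heart of the matter. Reading $\supp\nu_0$ as the essential support $\{\rho_0>0\}$, on which $m$ and $\nu_0$ are equivalent, the hypothesis $\int_M\delta_\eps\,d\nu_0\to0$ forces $\delta_\eps\to0$ in $m$-measure on $\supp\nu_0$, while $\delta_\eps\le 4L$. If the densities $\{k_\eps\}_\eps$ are equi-integrable with respect to $m$, then splitting at a level $\gamma$ and using $\kappa_\eps(\{\delta_\eps>\gamma\})=\int_{\{\delta_\eps>\gamma\}}k_\eps\,dm\to0$ (since $m(\{\delta_\eps>\gamma\}\cap\supp\nu_0)\to0$) shows $\int\delta_\eps\,d\kappa_\eps\le\gamma+o(1)$; letting $\gamma\to0$ finishes the estimate. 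Combining everything, $\limsup_{\eps\to0}\int_M\delta_\eps\,d\mu\le 4L\,\mu(M\setminus G_{N_0})$, and letting $N_0\to\infty$ completes the proof. The hard part is exactly this equi-integrability of the transported densities $k_\eps$ \emph{uniformly in} $\eps$: nonsingularity alone only yields $\kappa_\eps\ll m$ for each fixed $\eps$, and since the fast map $T_\eps$ varies with $\eps$ one cannot simply freeze $\kappa_\eps$ and invoke Corollary~\ref{cor-gen}(b) with a single limiting measure. I expect this uniform absolute continuity to be the genuine obstacle, to be supplied by the regularity of the fast dynamics (the uniform distortion and transfer-operator bounds available for the nonuniformly expanding families treated in the later sections).
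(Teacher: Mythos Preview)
Your reduction to showing $\int_M\delta_\eps\,d\mu\to0$ and your shift estimate $\delta_\eps(y)\le\delta_\eps(T_\eps^Ny)+O(N\eps)$ are both correct and are exactly what the paper uses. The difference is in the final step, and there your argument has a genuine gap that you yourself flag: you push $\mu$ forward by $T_\eps^N$ and then need the transported densities $k_\eps$ to be equi-integrable uniformly in $\eps$. Nonsingularity gives absolute continuity for each fixed $\eps$ but says nothing about uniformity, and appealing to ``regularity from later sections'' is not legitimate here --- the corollary is stated and proved under the hypotheses listed, with no extra distortion or transfer-operator input.

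The paper sidesteps this entirely by arguing pointwise along a subsequence rather than transporting measures. It first reduces (via the subsequence trick of Corollary~\ref{cor-gen}(b)) to the single measure $m$: it suffices that $\int_M\delta_\eps\,dm\to0$. Suppose not; pick $\eps_k\to0$ with $\int_M\delta_{\eps_k}\,dm\to b>0$. Since $\int_{\supp\nu_0}\delta_\eps\,dm\to0$ (Corollary~\ref{cor-gen}(b) applied with $\mu=m|_{\supp\nu_0}$ normalised), pass to a further subsequence so that $\delta_{\eps_k}\to0$ pointwise on a full-$m$-measure set $A\subset\supp\nu_0$. Nonsingularity of every $T_{\eps_k}$ guarantees that for $m$-a.e.\ $y$ the uniform entry time $N=N(y)$ actually lands $T_{\eps_k}^Ny$ in $A$ (not merely in $\supp\nu_0$) for all $k$. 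The shift estimate then gives $\delta_{\eps_k}(y)\le\delta_{\eps_k}(T_{\eps_k}^Ny)+8LN\eps_k\to0$ pointwise $m$-a.e.\ on $M$, and bounded convergence yields the contradiction. No control on pushed-forward densities is needed: the point is that the \emph{same} $N(y)$ works for every $\eps$, so one can freeze $y$ and let $k\to\infty$.
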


\begin{proof}
First, we note that for all $N\ge1$, $\eps\ge0$,
\begin{align} \label{eq-dm}
|\delta_\eps\circ T_\eps^N-\delta_\eps|_\infty\le 8LN\eps .
\end{align}

By the arguments in the proof of Corollary~\ref{cor-gen},
it suffices to prove that
\mbox{$\int_M\delta_\eps\,dm\to0$}.
Suppose this is not the case.
By Corollary~\ref{cor-gen}(b), \mbox{$\int_{\supp\nu_0}\delta_\eps\,dm\to0$}.
Hence there exists a subsequence $\eps_k\to0$ and a subset $A\subset\supp\nu_0$ with $m(\supp\nu_0\setminus A)=0$ such that
(i) $\delta_{\eps_k}\to0$ pointwise on $A$ and (ii) $\int_M\delta_{\eps_k}\,dm\to b>0$.

Since each $T_\eps$ is nonsingular, there exists $M'\subset M$ with $m(M')=1$
such that $M'\cap T_{\eps_k}^{-n}(\supp\nu_0\setminus A)=\emptyset$ for
all $k,n\ge1$.
By the hypothesis of the result, there is a subset $M''\subset M'$ with $m(M'')=1$ such that
for any $y\in M''$ there exists $N\ge1$ such that
$T_{\eps_k}^Ny\in A$ for all $k\ge1$.
Hence it follows from (i) and~\eqref{eq-dm} that $\delta_{\eps_k}\to 0$ pointwise on $M''$.
By the bounded convergence theorem $\int_M\delta_{\eps_k}\,dm\to0$.  Together with (ii), this yields the desired contradiction.
\end{proof}

\begin{rmk} \label{rmk-dm}
The hypotheses of Corollary~\ref{cor-dm} are particularly straightforward to apply when $\supp\nu_0$ has nonempty interior.
This property is automatic for large classes of nonuniformly expanding maps, see~\cite[Lemma~5.6]{AlvesBonattiViana00} (taking $G=\supp\nu_0\cap H(\sigma)$, it follows that $\bar G$ contains a disk).

\end{rmk}

\subsection{Estimating the order function}
\label{sec-delta}

By Proposition~\ref{prop-gen} and Corollary~\ref{cor-gen}, it remains to deal with the
order function $\delta_\eps$.  This is a random variable depending on the initial condition $y_0$.   Here we give a useful estimate.

Since $\delta_\eps=\delta_{1,\eps}+\delta_{2,\eps}$ and the definition of $\delta_{2,\eps}$ is identical to that of $\delta_{1,\eps}$ with $a$ replaced by $Da$,
it suffices to consider $\delta_{1,\eps}$.
From now on, $\Gamma$ denotes a constant that only depends on $d,p,L$ and whose value may change from line to line.

\begin{lemma}\label{lem-delta}
  Let $\mu$ be a probability measure on $M$.
  Then for all $p \geq 0$, $\eps\in[0,\eps_0)$,
  \begin{align*}
    \int_M \delta_{1,\eps}^{p+d+1} \, d\mu \leq 
    \Gamma \eps^p\sup_{x\in E} \sup_{1 \leq n \leq 1/\eps} \int_M |v_{\eps, x, n}|^p \, d\mu.
  \end{align*}
\end{lemma}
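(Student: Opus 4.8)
The plan is \emph{not} to bound the supremum defining $\delta_{1,\eps}$ from above by a covering argument: discretising $E\times\{1,\dots,N\}$ (with $N=\lfloor 1/\eps\rfloor$) into $\sim\eps^{-(d+1)}$ cells and summing the individual $L^p$-integrals overcounts and loses exactly the factor $\eps^{-(d+1)}$ that the power bump $p\mapsto p+d+1$ is supposed to pay for. Instead I would bound the \emph{right-hand side from below}. The starting point is the observation that, writing $g(x,n,y)=\eps\, v_{\eps,x,n}(y)$, the random variable is $\delta_{1,\eps}(y)=\sup_{x\in E}\sup_{1\le n\le N}|g(x,n,y)|$, a supremum over a $(d+1)$-dimensional index set, and that $g$ is Lipschitz in $(x,n)$ with an $\eps$-independent constant. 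Indeed, $\|a\|_\Lip\le L_1$ gives $|v_{\eps,x}|\le 2L_1$ and $\Lip_x v_{\eps,x}\le 2L_1$, so since $n\eps\le 1$,
\[
|g(x,n,y)-g(x',n',y)|\le 2L_1\bigl(|x-x'|+\eps|n-n'|\bigr).
\]
(Only \eqref{eq-L1} is used here; when this lemma is later applied to $\delta_{2,\eps}$, i.e.\ with $a$ replaced by $Da$, the corresponding Lipschitz bound is precisely \eqref{eq-L2}, which is consistent with Remark~\ref{rmk-L2}.)

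The heart of the matter is then a purely geometric ``Lipschitz spreading'' estimate. Fix $y$ and let $G=\delta_{1,\eps}(y)$, attained at some $(x^*,n^*)$. By the displayed bound, $|g(x,n,y)|\ge G/2$ for every $(x,n)$ with $|x-x^*|+\eps|n-n^*|\le G/(4L_1)$. Because $G\le 2L_1$, the $x$-section of this box is a cube of radius $G/(4L_1)$ about $x^*$, of which a fixed proportion survives intersection with the cube $E$, contributing Lebesgue volume $\ge \Gamma^{-1}G^{d}$; and its $n$-section is an interval of integers of length $\ge\Gamma^{-1}G/\eps$. Hence for every $y$,
\[
\sum_{n=1}^{N}\int_E |g(x,n,y)|^p\,dx\ \ge\ (G/2)^p\cdot \Gamma^{-1}G^{d}\cdot \Gamma^{-1}\eps^{-1}G\ =\ \Gamma^{-1}\eps^{-1}\,\delta_{1,\eps}(y)^{\,p+d+1}.
\]

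Finally I would integrate in $y$ and invoke ``sup dominates average''. Since $\eps^{p}\int_M|v_{\eps,x,n}|^p\,d\mu=\int_M|g(x,n,\cdot)|^p\,d\mu$ is at most its supremum over $(x,n)$, averaging over the $N|E|$ choices of $(x,n)$ and using the previous display gives
\[
\eps^{p}\sup_{x\in E}\sup_{1\le n\le 1/\eps}\int_M|v_{\eps,x,n}|^p\,d\mu\ \ge\ \frac{1}{N|E|}\int_M\sum_{n=1}^N\int_E|g(x,n,\cdot)|^p\,dx\,d\mu\ \ge\ \frac{\Gamma^{-1}\eps^{-1}}{N|E|}\int_M \delta_{1,\eps}^{\,p+d+1}\,d\mu,
\]
and $\eps^{-1}/(N|E|)\ge \Gamma^{-1}$ since $N=\lfloor 1/\eps\rfloor$ forces $\eps N\le 1$ while $|E|=(2L_1)^d$; rearranging yields the claim. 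The one step I would carry out with real care is the bookkeeping in the spreading estimate, namely checking that the box about $(x^*,n^*)$ retains a fixed fraction of its size after intersection with $E\times\{1,\dots,N\}$ in all boundary regimes: the degenerate case $G\lesssim\eps$, where the $n$-interval may collapse to $\{n^*\}$, and the case where the $x$-cube engulfs $E$. Here the trivial bound $G\le 2L_1$ is exactly what keeps the $n$-radius $G/(4L_1\eps)\le 1/(2\eps)\le N$, so that the interval never overshoots the available range.
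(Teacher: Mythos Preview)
Your proposal is correct and takes essentially the same approach as the paper's own proof: both fix $y$, locate the maximiser $(\tilde x,\tilde n)$, use the Lipschitz bound $|v_{\eps,x,n}-v_{\eps,\tilde x,\tilde n}|\le 2L\eps^{-1}|x-\tilde x|+2L|n-\tilde n|$ to show $\eps|v_{\eps,x,n}|\ge\delta_{1,\eps}/2$ on a box in $E\times\{1,\dots,[1/\eps]\}$ of volume $\ge\Gamma^{-1}\eps^{-1}\delta_{1,\eps}^{d+1}$, then integrate in $y$ and bound the resulting sum/integral over $(x,n)$ by its supremum. Your discussion of the boundary regimes (small $G$ and the $x$-cube overshooting $E$) is a bit more explicit than the paper's, which dispatches both with the single observation $\delta_{1,\eps}\le 2L$.
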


\begin{proof}
	For most of the proof, we work pointwise on $M$ suppressing the initial condition $y_0\in M$.
        There exist $\tilde x\in E$, $\tilde n\in[0,1/\eps]$ such that
  $\delta_{1,\eps}=\epsilon|v_{\eps, \tilde{x}, \tilde{n}}|$.
  Observe that
  \[
          |v_{\eps,x,n}-v_{\eps, \tilde{x}, \tilde{n}}| \leq
    |v_{\eps, x, n} - v_{\eps, \tilde{x}, n}| +
    |v_{\eps, \tilde{x}, n} - v_{\eps, \tilde x, \tilde n}| \leq 2L\eps^{-1}|x-\tilde{x}| + 2L |n-\tilde{n}|
  \]
  for every $x\in E$ and $n\le 1/\eps$. Define
  \begin{align*}
          A &= \{x  \in E : |x-\tilde{x}| \leq {\SMALL\frac{1}{8L}}\delta_{1,\eps} \},
          \quad 
          B = \{ n \in [0, \eps^{-1}] : \eps |n-\tilde{n}| \leq {\SMALL\frac{1}{8L}}\delta_{1,\eps}\}.
  \end{align*}
  Then for every $x \in A$ and $n \in B$
  we have $\eps|v_{\eps, x, n}| \geq \delta_{1,\eps}/2$.
  Moreover, since $\delta_{1,\eps}\le2L$, we have $\Leb(A)\ge\Gamma\delta_{1,\eps}^d$
  and $\# B\ge \eps^{-1} \delta_{1,\eps} / 8L$.
  Hence
  \[
    \eps^p\sum_{n=0}^{[1/\eps]-1} \int_E |v_{\eps, x, n}|^p \, dx  \geq 
    (\# B)\, \Leb(A)\left(\frac{\delta_{1,\eps}}{2}\right)^p\ge 
    \Gamma\eps^{-1}\delta_{1,\eps}^{p+d+1}.
  \]

  Finally,
  \begin{align*}
    \int_M \delta_{1,\eps}^{p+d+1}\,d\mu
    & \le \Gamma
      \eps^{p+1} \sum_{n=0}^{[1/\eps]-1} \int_E\int_M |v_{\eps, x, n}|^p \,
      d\mu \,dx
  \le \Gamma \eps^p\sup_{x\in E} \, \sup_{1 \leq n \leq 1/\eps}
      \int_M |v_{\eps, x, n}|^p \,d\mu,
  \end{align*}
  as required.
\end{proof}

\begin{rmk}
Often, estimating  $\int_M|v_{\eps,x,n}|\,d\mu$ leads to an essentially identical estimate for $\int_M \sup_{1\le n\le 1/\eps}|v_{\eps,x,n}|\,d\mu$.
In this case, slightly better convergence rates for $\delta_{1,\eps}$ can be obtained using the estimate
  \begin{align} \label{eq-delta}
    \int_M \delta_{1,\eps}^{p+d} \, d\mu \leq 
    \Gamma \eps^p\sup_{x\in E} \int_M \sup_{1\le n\le 1/\eps}|v_{\eps, x, n}|^p \, d\mu
  \end{align}
  for all $p \geq 0$, $\eps\in[0,\eps_0)$.
\end{rmk}

\section{Examples: Uniformly expanding maps}
\label{sec-UE}

Let $T_\eps:M\to M$ be a family of maps defined on a metric space $(M,d_M)$ with invariant ergodic Borel probability measures $\nu_\eps$.   Let $P_\eps$ denote the corresponding transfer operators,
so $\int_M P_\eps v\,w\,d\nu_\eps=\int_M v\,w\circ T_\eps\,d\nu_\eps$ for all
$v\in L^1(\nu_\eps)$, 
$w\in L^\infty(\nu_\eps)$. 

From now on, we require Lipschitz regularity in the $M$ variables in addition to the $\R^d$ variables as was required in assumptions~\eqref{eq-L1} and~\eqref{eq-L2}.
So for $g:\R^d\times M\times[0,\eps_0)\to\R^n$ we define
	$\|g\|_\Lip=|g|_\infty+\sup_{\eps\in[0,\eps_0)}\Lip\,g(\cdot,\cdot,\eps)$ where
$\Lip\,g(\cdot,\cdot,\eps)=\sup_{x\neq x'}\sup_{y\neq y'}|g(x,y,\eps)-g(x',y',\eps)|/(|x-x'|+d_M(y,y'))$.
We continue to assume conditions~\eqref{eq-L1}---\eqref{eq-L3} with this modified definition of $\|\;\|_\Lip$.

\begin{prop} \label{prop-expanding}
Suppose that there is a sequence of constants $a_n\to0$ such that
$\int_M|P_\eps^nv-\int_M v\,d\mu_\eps|\le a_n\|v\|_\Lip$ for all Lipschitz $v:M\to\R$ and all $n\ge1$, $\eps\ge0$.  Then $\lim_{\eps\to0}\int_M\delta_\eps\,d\nu_\eps=0$.
\end{prop}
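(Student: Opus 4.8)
The plan is to bound $\int_M\delta_\eps\,d\nu_\eps$ by feeding a variance estimate, obtained from the decay-of-correlations hypothesis, into the a priori bound of Lemma~\ref{lem-delta}. As noted just before Lemma~\ref{lem-delta}, $\delta_\eps=\delta_{1,\eps}+\delta_{2,\eps}$ and $\delta_{2,\eps}$ is defined exactly as $\delta_{1,\eps}$ with $a$ replaced by $Da$, so by linearity of the integral it suffices to show $\int_M\delta_{1,\eps}\,d\nu_\eps\to0$; the argument for $\delta_{2,\eps}$ is verbatim, using that under the $M$-Lipschitz convention of this section $\|Da|_E\|_\Lip\le L_2$ controls the $M$-Lipschitz norm of $Dv_{\eps,x}$ uniformly. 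Taking $\mu=\nu_\eps$ and $p=2$ in Lemma~\ref{lem-delta} gives
\[
\int_M\delta_{1,\eps}^{d+3}\,d\nu_\eps\le\Gamma\eps^2\sup_{x\in E}\sup_{1\le n\le1/\eps}\int_M v_{\eps,x,n}^2\,d\nu_\eps ,
\]
so everything reduces to a uniform second-moment bound for the Birkhoff sums $v_{\eps,x,n}$ of the mean-zero observable $v_{\eps,x}$. I choose $p=2$ because the hypothesis gives access to correlations, hence to second moments, but not directly to higher ones.

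For the variance I would expand the square and use invariance of $\nu_\eps$ together with the duality defining $P_\eps$ to write each correlation as
\[
\int_M(v_{\eps,x}\circ T_\eps^i)(v_{\eps,x}\circ T_\eps^j)\,d\nu_\eps=\int_M\bigl(P_\eps^{|i-j|}v_{\eps,x}\bigr)\,v_{\eps,x}\,d\nu_\eps .
\]
Since $v_{\eps,x}$ has zero $\nu_\eps$-mean, so does $P_\eps^k v_{\eps,x}$, and the hypothesis yields $\int_M|P_\eps^k v_{\eps,x}|\,d\nu_\eps\le a_k\|v_{\eps,x}\|_\Lip$. By assumption~\eqref{eq-L1} (with the present $M$-Lipschitz convention), $\|v_{\eps,x}\|_\Lip\le\Gamma$ and $|v_{\eps,x}|_\infty\le\Gamma$ uniformly in $x\in E$ and $\eps$, so each correlation is at most $\Gamma a_{|i-j|}$. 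Summing the double sum gives
\[
\int_M v_{\eps,x,n}^2\,d\nu_\eps\le\Gamma\,n\sum_{k=0}^{n-1}a_k ,
\]
uniformly in $x$ and $\eps$.

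Combining the two displays and writing $b_n=\frac1n\sum_{k=0}^{n-1}a_k$, I obtain $\int_M\delta_{1,\eps}^{d+3}\,d\nu_\eps\le\Gamma\sup_{1\le n\le1/\eps}\eps^2n^2 b_n$. Showing that this supremum tends to $0$ using only $a_n\to0$ (no summability) is the step I expect to require the most care. The crucial point is that $a_n\to0$ forces the Cesàro averages $b_n\to0$, so the variance is genuinely $o(n^2)$ rather than merely $O(n^2)$. Given $\eta>0$, choose $N$ with $b_n<\eta$ for $n\ge N$; for $N\le n\le1/\eps$ one has $\eps^2n^2\le1$ and hence $\eps^2n^2b_n\le b_n<\eta$, while the finitely many terms $1\le n<N$ contribute at most $\eps^2N^2\sup_k b_k\to0$ as $\eps\to0$. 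Thus $\limsup_{\eps\to0}\int_M\delta_{1,\eps}^{d+3}\,d\nu_\eps\le\Gamma\eta$ for every $\eta>0$, so $\int_M\delta_{1,\eps}^{d+3}\,d\nu_\eps\to0$. Jensen's inequality upgrades this $L^{d+3}$ convergence to $\int_M\delta_{1,\eps}\,d\nu_\eps\to0$, and the identical estimate for $\delta_{2,\eps}$ finishes the proof.
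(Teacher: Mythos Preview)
Your proposal is correct and follows essentially the same route as the paper: apply Lemma~\ref{lem-delta} with $p=2$ and $\mu=\nu_\eps$, bound the variance of $v_{\eps,x,n}$ by expanding the square and using the transfer-operator hypothesis to control correlations, observe that the resulting bound is $o(n^2)$ via Ces\`aro averaging, and conclude; then repeat for $\delta_{2,\eps}$. The paper invokes Remark~\ref{rmk-ae} (boundedness of $\delta_\eps$) in place of your Jensen step, and it separates the diagonal term $n\int v^2\,d\nu_\eps$ from the off-diagonal correlation sum explicitly rather than absorbing it into an undefined $a_0$, but these are cosmetic differences.
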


\begin{proof}  
We prove the result for $\delta_{1,\eps}$ and $\delta_{2,\eps}$ separately.
By Remark~\ref{rmk-ae}, we can work in $L^q$ for any choice of $q$ and we take $q=d+3$.

For every Lipschitz $v$, we have
\begin{align*}
\int_M \Bigl(\sum_{j=0}^{n-1}v\circ T_\eps^j\Bigr)^2\,d\nu_\eps & =
\sum_{j=0}^n\int_M v^2\circ T_\eps^j\,d\nu_\eps+2\sum_{0\le i<j\le n-1} \int_M v\circ T_\eps^i \,v\circ T_\eps^j\,d\nu_\eps
\\ & = n\int_M v^2\,d\nu_\eps+2\sum_{1\le k\le n-1} (n-k) \int_M v\,v\circ T_\eps^k\,d\nu_\eps
\\ & =  n\int_M v^2\,d\nu_\eps+2\sum_{1\le k\le n-1}(n-k) \int_M P_\eps^k v\,v\,d\nu_\eps.
\end{align*}
Hence for $v$ Lipschitz and mean zero,
\[
\int_M \Bigl(\sum_{j=0}^{n-1}v\circ T_\eps^j\Bigr)^2\,d\nu_\eps  
 \le b_n\|v\|_\Lip^2,
\]
where $b_n= n+2n\sum_{1\le k\le n-1} a_k=o(n^2)$ by the assumption on $a_n$.

By condition~\eqref{eq-L1}, $\|v_{\eps,x}\|_\Lip\le 2L$ for all $\eps,x$, so
$\sup_{x\in E}\sup_{1\le n\le 1/\eps}\int_M|v_{\eps,x,n}|^2\,d\nu_\eps\le 4L^2b_n$.
By Lemma~\ref{lem-delta}, it follows that
$\lim_{\eps\to0}\int_M\delta_{1,\eps}^{d+3}\,d\nu_\eps=0$.
Similarly, 
by condition~\eqref{eq-L2}, $\|V_{\eps,x}\|_\Lip\le 2L$, so
$\sup_{x\in E}\sup_{1\le n\le 1/\eps}\int_M|V_{\eps,x,n}|^2\,d\nu_\eps\le 4L^2b_n$ and hence
$\lim_{\eps\to0}\int_M\delta_{2,\eps}^{d+3}\,d\nu_\eps=0$.
\end{proof}

\begin{rmk}  The proof uses only that $n^{-1}\sum_{k=1}^n a_n\to0$.
\end{rmk}

Proposition~\ref{prop-expanding} is useful in situations where $T_\eps$ is a family of (piecewise) uniformly expanding maps.
A general result of Keller \& Liverani~\cite{KellerLiverani99} guarantees uniform spectral properties of the transfer operators $P_\eps$ under mild conditions, and consequently 
$\lim_{\eps\to0}\int_M\delta_\eps^q\,d\nu_\eps=0$ for all $q$.
We mention two situations where this idea can be applied.
Again, for brevity we work in $L^1$ except when discussing convergence rates
(see Remark~\ref{rmk-ae}).

\begin{eg}[Uniformly expanding maps] \label{eg-UE}
Suppose that $M=\T^k\cong\R^/\Z^k$ is a torus with Haar measure $m$
and distance $d_M$ inherited from Euclidean distance on $\R^k$ and normalised so that $\diam M=1$.  
We say that a $C^2$ map $T:M\to M$ is {\em uniformly expanding} if
there exists $\lambda>1$ such that
$|(DT)_yv|\ge\lambda|v|$ for all $y\in M$, $v\in \R^k$.
There is a unique absolutely continuous invariant
probability measure, and the density is $C^1$ and nonvanishing.

If $T_\eps:M\to M$, $\eps\in[0,1]$, is a continuous family of $C^2$ maps,
each of which is uniformly expanding, with corresponding probability measures $\nu_\eps$, then it follows 
from~\cite{KellerLiverani99} that we are in the situation of Proposition~\ref{prop-expanding}, and so 
$\lim_{\eps\to0}\int_M\delta_\eps\,d\nu_\eps=0$.

Moreover, it is well-known that $\nu_0$ is uniformly equivalent to $m$ and is strongly statistically stable.
Hence by Corollary~\ref{cor-gen} we obtain the averaging result
$\lim_{\eps\to0}\int_M \z\,d\nu_\eps=0$ and
$\lim_{\eps\to0}\int_M \z\,d\mu=0$
for every absolutely continuous probability measure $\mu$.

Suppose further that $T_\eps:M\to M$, $\eps\in[0,1]$, is a $C^k$ family of $C^2$ maps, for some $k\in(0,1]$.
By standard results (for instance~\cite{KellerLiverani99} with Banach spaces $C^0$ and $C^1$),
$R_\eps=\int_M|\rho_\eps-\rho_0|\,dm=O(\eps^k)$.
If $k\in(0,\frac12)$, then
by Remark~\ref{rmk-rate} below we obtain the convergence rate $O(\eps^k)$
for $\z$ in $L^q(\nu_\eps)$ and $L^q(m)$ for all $q>0$.
If $k\ge \frac12$, then the convergence rate for $\z$ is $O(\eps^{\frac12-})$.
\end{eg}

\begin{eg}[Piecewise uniformly expanding maps]
Let $M=[-1,1]$ with Lebesgue measure $m$.  We consider continuous maps $T:M\to M$ with $T(-1)=T(1)=-1$
such that $T$ is $C^2$ on $[-1,0]$ and $[0,1]$.  We require that there exists $\lambda>1$ such that
$T'\ge \lambda$ on $[-1,0)$ and
$T'\le -\lambda$ on $(0,1]$.
There exists a unique absolutely continuous invariant probability measure
with density of bounded variation.

The results are analogous to those in Example~\ref{eg-UE}.
Let $\eps\to T_\eps$ be a continuous family of such maps on $[-1,1]$ with associated measures $\nu_\eps$.
We assume that 
$T_0$ is topologically mixing on the interval $[T_0^2(0),T_0(0)]$ and that $0$ is not periodic (which guarantees that $T_\eps$ is mixing for $\eps$ small).

 Then $\nu_0$ is strongly statistically stable, so by Corollaries~\ref{cor-gen} and~\ref{cor-dm}
we obtain averaging in $L^1(\nu_\eps)$ and also in $L^1(\mu)$ for every absolutely continuous probability measure $\mu$.

Now suppose that $\eps\to T_\eps$ is a $C^1$ family of such maps on $[-1,1]$
with densities $\rho_\eps=d\nu_\eps/dm$.
Keller~\cite{Keller82} showed that $\eps\to \rho_\eps$ is $C^{1-}$ as a map into $L^1$ densities.
Hence we obtain the convergence rate $O(\eps^{\frac12-})$ for $\z$ in $L^q(\nu_\eps)$, for all $q>0$ and in $L^1(\nu_0)$.

More precisely,~\cite{Keller82} shows that
$\int_M|\rho_\eps-\rho_0|\,dm=O(\eps\log\eps^{-1})$.
By~\cite{Baladi07}, this estimate is optimal, so this is a 
situation where linear response fails in contrast to Example~\ref{eg-UE}.
\end{eg}

\section{Families of nonuniformly expanding maps}
\label{sec-ufNUE}

In this section, we consider the situation where the fast dynamics is generated by nonuniformly expanding maps $T_\eps$, such that the nonuniform expansion is uniform in the parameter $\eps$.

In Subsection~\ref{sec-NUE}, we recall the notion of nonuniformly expanding map.
In Subsection~\ref{sec-uf}, we describe the uniformity criteria on the family $T_\eps$ and state our main result on averaging, Theorem~\ref{thm-NUE}, for such families.
In Subsections~\ref{sec-estUE} and~\ref{sec-est}  we establish some basic estimates for uniformly and nonuniformly expanding maps.
In Subsection~\ref{sec-pf} we prove Theorem~\ref{thm-NUE}.

\subsection{Nonuniformly expanding maps}
\label{sec-NUE}

Let $(M,d_M)$ be a locally compact separable bounded metric space with
finite Borel measure $m$ and let $T:M\to M$ be a nonsingular
transformation for which $m$ is ergodic.
Let $Y\subset M$ be a subset of positive measure, and 
let $\alpha$ be an at most countable measurable partition
of $Y$ with $m(a)>0$ for all $a\in\alpha$.    We suppose that there is an $L^1$
{\em return time} function $\tau:Y\to\Z^+$, constant on each $a$ with
value $\tau(a)\ge1$, and constants $\lambda>1$, $\eta\in(0,1]$, $C_0,C_1\ge1$
such that for each $a\in\alpha$,
\begin{itemize}
\item[(1)] $F=T^\tau$ restricts to a (measure-theoretic) bijection from $a$  onto $Y$.
\item[(2)] $d_M(Fx,Fy)\ge \lambda d_M(x,y)$ for all $x,y\in a$.
\item[(3)] $d_M(T^\ell x,T^\ell y)\le C_0d_M(Fx,Fy)$ for all $x,y\in a$,
$0\le \ell <\tau(a)$.
\item[(4)] $\zeta=\frac{dm|_Y}{dm|_Y\circ F}$
satisfies $|\log \zeta(x)-\log \zeta(y)|\le C_1d_M(Fx,Fy)^\eta$ for all
\mbox{$x,y\in a$}.
\end{itemize}
Such a dynamical system $T:M\to M$ is called {\em nonuniformly expanding}.
We refer to $F=T^\tau:Y\to Y$ as the {\em induced map}.
(It is not required that $\tau$ is the first return time to $Y$.)
It follows from standard results (recalled later) that there is a unique absolutely continuous ergodic $T$-invariant probability measure $\nu$ on $M$.

\begin{rmk}  The uniformly expanding maps in Example~\ref{eg-UE} 
are clearly nonuniformly expanding: take $Y=M$, $\eta=1$, $\tau=1$. 
Then conditions (1) and (2) are immediate, condition (3) is vacuously satisfied,
and condition (4) holds with
$C_1=\sup_{x,y\in M,\,x\neq y}|(DT_\eps)_x-(DT_\eps)_y|/d_M(x,y)$.
\end{rmk}

\subsection{Uniformity assumptions}
\label{sec-uf}

Now suppose that $T_\eps:M\to M$, $\eps\in[0,\eps_0)$, is a family of nonuniformly expanding maps
as defined in Subsection~\ref{sec-NUE}, with corresponding absolutely continuous
ergodic invariant probability measures $\nu_\eps$.

\begin{defn}  \label{def-uf}
Let $p>1$.  We say that $T_\eps:M\to M$ is a {\em uniform family} of nonuniformly expanding maps (of order $p$) if 
\begin{itemize}
\item[(i)] The constants $C_0,C_1\ge1$, $\lambda>1$, $\eta\in(0,1]$
can be chosen independent of $\eps\in[0,\eps_0)$.
\item[(ii)] The return time functions
$\tau_\eps:Y_\eps\to\Z^+$ lie in $L^p$ for all $\eps\in[0,\eps_0)$,
	and moreover $\sup_{\eps\in[0,\eps_0)}\int_{Y_\eps}|\tau_\eps|^p\,dm<\infty$.
\end{itemize}
\end{defn}

We can now state our main result for this section.    Recall the set up in
Section~\ref{sec-gen}.

\begin{thm} \label{thm-NUE}
If $T_\eps:M\to M$ is a uniform family of nonuniformly expanding maps of order $p$, then
there is a constant $C>0$ such that
for all $\eps\in[0,\eps_0)$
\[
	\int_M \delta_\eps^{p+d-1}d\nu_\eps\le\begin{cases}
C\eps^{(p-1)/2}, & p>2 \\
	C\eps^{(p-1)^2/p}, & p\in(1,2]  
\end{cases}.
\]
\end{thm}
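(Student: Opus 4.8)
The plan is to reduce the entire statement to a single uniform moment bound for Birkhoff sums and then feed that bound into the maximal order-function estimate \eqref{eq-delta}. Since $\delta_\eps=\delta_{1,\eps}+\delta_{2,\eps}$, by convexity it suffices to bound $\int_M\delta_{1,\eps}^{p+d-1}\,d\nu_\eps$ and $\int_M\delta_{2,\eps}^{p+d-1}\,d\nu_\eps$ separately. As noted before Lemma~\ref{lem-delta}, $\delta_{2,\eps}$ has exactly the form of $\delta_{1,\eps}$ with $a$ replaced by $Da$; conditions \eqref{eq-L1} and \eqref{eq-L2} (in the strengthened, $M$-Lipschitz sense of Section~\ref{sec-UE}) make $Dv_{\eps,x}$ a uniformly bounded Lipschitz observable, just like $v_{\eps,x}$, so the two terms are estimated identically and I will only discuss $\delta_{1,\eps}$.

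First I would apply the maximal estimate \eqref{eq-delta} with exponent $p-1$ and $\mu=\nu_\eps$. Since $(p-1)+d=p+d-1$, this gives
\[
\int_M \delta_{1,\eps}^{p+d-1}\,d\nu_\eps \le \Gamma\,\eps^{p-1}\sup_{x\in E}\int_M \max_{1\le n\le 1/\eps}|v_{\eps,x,n}|^{p-1}\,d\nu_\eps .
\]
Thus the theorem follows once I prove a uniform maximal moment bound: there is $C>0$, depending only on $d,L$ and the uniform data of the family, such that for all $x\in E$, $\eps\in[0,\eps_0)$ and $N\ge1$,
\[
\int_M \max_{1\le n\le N}|v_{\eps,x,n}|^{p-1}\,d\nu_\eps \le C\,N^{\beta},
\qquad \beta=\begin{cases}(p-1)/2, & p>2,\\ (p-1)/p, & p\in(1,2].\end{cases}
\]
Taking $N=[1/\eps]$ and substituting yields $\Gamma\eps^{p-1}\cdot C\eps^{-\beta}$, which equals $C'\eps^{(p-1)/2}$ when $p>2$ and $C'\eps^{(p-1)^2/p}$ when $p\in(1,2]$ (note both exponents agree at $p=2$), matching the two cases. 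Observe also that $\|v_{\eps,x}\|_\Lip\le 2L$ uniformly in $x,\eps$ by \eqref{eq-L1}, so it is legitimate to expect the constant $C$ to be independent of $x$ and $\eps$.

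The main work, and the only place where Definition~\ref{def-uf} is used, is this maximal moment bound, which I would derive from the estimates for uniformly and nonuniformly expanding maps established in Subsections~\ref{sec-estUE} and~\ref{sec-est}. The strategy is to pass to the induced (uniformly expanding) map $F_\eps=T_\eps^{\tau_\eps}$ on $Y_\eps$, for which the uniform constants $C_0,C_1,\lambda,\eta$ yield transfer-operator estimates uniform in $\eps$, and then to control the Birkhoff sums of the mean-zero, uniformly Lipschitz observable $v_{\eps,x}$ via a martingale approximation over the tower. The integrability $\tau_\eps\in L^p$ with a bound uniform in $\eps$ is precisely what governs the growth of the maximal function: for $p>2$ there is enough integrability for the diffusive $N^{1/2}$ scaling, whereas for $p\in(1,2]$ the heavy tails of $\tau_\eps$ force the slower $N^{1/p}$ rate; since $p-1<p$, monotonicity of $L^r(\nu_\eps)$-norms lets me read off the required $(p-1)$-th moment from an $L^p$ maximal estimate. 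The key difficulty is ensuring that every constant in this chain is uniform in $\eps$ — this is the whole content of the ``uniform family'' notion — and in cleanly handling the subdiffusive regime $p\le 2$, where the absence of a second moment for $\tau_\eps$ rules out a naive $L^2$/CLT argument and instead produces the stable-law scaling $N^{1/p}$.
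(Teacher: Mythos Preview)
Your proposal is correct and follows essentially the same route as the paper: the proof in Subsection~\ref{sec-pf} applies \eqref{eq-delta} with exponent $p-1$ and then invokes the maximal moment bound of Lemma~\ref{lem-moment}, which is exactly the inequality you isolate (your exponent $\beta$ is $(p-1)\max\{1/2,1/p\}$). The one refinement worth noting is that the $(p-1)$-th moment arises not by monotonicity from an $L^p$ estimate but structurally: Corollary~\ref{cor-Burk} gives an $L^p$ maximal bound for the \emph{induced} Birkhoff sums $V_n$, and the passage back down the tower in Lemma~\ref{lem-moment} costs exactly one power of integrability via H\"older against $\tau_\eps\in L^p$, yielding $L^{p-1}(\nu_\eps)$ directly.
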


\begin{rmk} \label{rmk-rate}  In the case $p>2$, it follows from
Theorem~\ref{thm-NUE} that
\[
|\delta_\eps|_{L^q(\nu_\eps)}
=O(\eps^{(p-1)/(2(p+d-1))}),
\]
for all $q\le p+d-1$.  Since $\delta_\eps$ is uniformly bounded,
$|\delta_\eps|_{L^q(\nu_\eps)}
=O(\eps^{(p-1)/(2q)})$ for all $q>p+d-1$.
Similar comments apply for $p\in(1,2]$.

In particular, if $p$ can be taken arbitrarily large in Definition~\ref{def-uf}, then we obtain that
$|\delta_\eps|_{L^q(\nu_\eps)}=O(\eps^{\frac12-})$ for all $q>0$.

If in addition $\nu_0$ is strongly statistically stable with
$R_\eps=\int_M|\rho_\eps-\rho_0|\,dm=O(\eps^{\frac12-})$,
then by Proposition~\ref{prop-gen} we obtain
$|\z|_{L^q(\nu_\eps)}=O(\eps^{\frac12-})$
for all $q>0$ and
$|\z|_{L^1(\nu_0)}=O(\eps^{\frac12-})$.
\end{rmk}

\begin{rmk}  \label{rmk-AV}
Alves \& Viana~\cite{AlvesViana02} prove strong statistical stability for a large class of noninvertible dynamical systems.  These maps are uniform families of nonuniformly expanding maps in a sense that is very similar to our definition.
In fact it is almost the case that their definition includes our definition, so it is almost the case that verifying the assumptions of~\cite{AlvesViana02}
is sufficient to obtain
averaging and rates of averaging via Theorem~\ref{thm-NUE}.

To be more precise, let us momentarily ignore assumption~(3) in Subsection~\ref{sec-NUE}.  Then 
Definition~\ref{def-uf}(i) with $\eta=1$ is immediate from~\cite[(U1)]{AlvesViana02}, and 
Definition~\ref{def-uf}(ii) is immediate from~\cite[(U2$'$)]{AlvesViana02} which follows from their conditions (U1) and (U2).

Hence it remains to discuss assumption~(3).  This assumption is not explicitly mentioned in~\cite{AlvesViana02} since it is not required for the statement of their main results.   However, in specific applications, the hypotheses in~\cite{AlvesViana02} are often verified via the method of~{\em hyperbolic times}~\cite{Alves00}.   When the return time function $\tau_\eps$ is a hyperbolic time, then it is automatic that $C_0=1$ (see for example~\cite[Proposition~3.3(3)]{Alves04}).

Alves {\em et al.}~\cite{AlvesLuzzattoPinheiro05} introduced a general method for constructing inducing schemes, where $\tau_\eps$ is not necessarily a hyperbolic time but is close enough that $C_0$ can still be chosen uniformly.
Alves~\cite{Alves04} combined the methods of~\cite{AlvesLuzzattoPinheiro05} and~\cite{AlvesViana02} to prove statistical stability for large classes of examples.  We show now that 
in the situation discussed by~\cite{Alves04}, assumption (3) holds with uniform $C_0$ and hence our main results hold.
Certain quantities $\delta_1>0$ and $N_0\ge1$ are introduced in~\cite[Lemma~3.2]{Alves04} and~\cite[eq.~(16)]{Alves04} respectively, and are explicitly uniform in $\eps$.  
Moreover $\tau_\eps=n+m$ where
$n$ is a hyperbolic time and $m\le N_0$ 
(see~\cite[Section~4.3]{Alves04}), so $C_0$ depends only on at most $N_0$ iterates of $T_\eps$.  The construction in~\cite{Alves04} (see in particular the proof of~\cite[Lemma~4.2]{Alves04}) ensures that the derivative of $T_\eps$ is bounded along these iterates, so assumption (3) holds and $C_0$ is uniform in $\eps$.

We mention also the extension of~\cite{AlvesLuzzattoPinheiro05} due to
Gou\"ezel~\cite{Gouezel06} where $C_0=1$ (see~\cite[Theorem~3.1 4)]{Gouezel06}.)

Finally, we note that when~\cite{AlvesLuzzattoPinheiro05} is used to obtain polynomial decay of correlations with rate $O(1/n^\beta)$, $\beta>0$, the resulting uniform family is of order $p=\beta+1-$.   (Uniformity in $\eps$ in Definition~\ref{def-uf}(ii) follows from~\cite[Lemma~5.1]{Alves04}.) 
\end{rmk}

\subsection{Explicit estimates for uniformly expanding maps}
\label{sec-estUE}

Throughout this subsection, we work with a fixed uniformly expanding map
$F:Y\to Y$ satisfying conditions (1), (2) and (4).   
Some standard constructions and estimates are described.  The main novelty is that we stress the dependence of various constants on the underlying constants $C_1$, $\lambda$ and $\eta$.
For convenience, we normalise the metric $d_M$ so that $\diam M=1$.

For $\theta\in(0,1)$, we define the
{\em symbolic metric} $d_\theta(x,y)=\theta^{s(x,y)}$ where the
{\em separation time} $s(x,y)$ is the least integer $n\ge0$ such that $F^nx$ and $F^ny$ lie in distinct partition element.  It is assumed that the partition $\alpha$ separates orbits of $F$, so $s(x,y)$ is finite for all $x\neq y$ guaranteeing that $d_\theta$ is a metric.

Given $\phi:Y\to\R^d$, we define $\|\phi\|_\theta=|\phi|_\infty+|\phi|_\theta$ where $|\phi|_\theta=\sup_{x\neq y}|\phi(x)-\phi(y)|/d_\theta(x,y)$.
Then $\phi$ is {\em $d_\theta$-Lipschitz} if $\|\phi\|_\theta<\infty$.

The assumptions on $F$ guarantee that there exists a unique absolutely continuous $F$-invariant probability measure $\mu$ on $Y$.
Let $P:L^1(Y)\to L^1(Y)$ denote the (normalised) transfer operator corresponding to $F$ and $\mu$,
so $\int_Y \phi\circ F\,\psi\,d\mu=\int_Y \phi\,P\psi\,d\mu$ for
all $\phi\in L^\infty$ and $\psi\in L^1$.
Define $g:Y\to\R$ by setting $g|_a=d\mu|_a/d(\mu\circ F|_a)$ for $a\in\alpha$.
Then $(P\phi)(y)=\sum_{a\in\alpha}g(y_a)\phi(y_a)$
where $y_a$ is the unique preimage of $y$ under $F$ lying in~$a$.

\begin{lemma} \label{lem-wellknown}
There exist constants $\theta,\gamma\in(0,1)$, $C_2\ge1$ depending continuously on $\lambda$, $\eta$ and $C_1$ such that
\begin{itemize}
\item[(a)] 
$d_M(x,y)^\eta\le d_\theta(x,y)$ for all $x,y\in Y$.
\item[(b)] For all $x,y\in a$, $a\in\alpha$,
\begin{align} \label{eq-GM}
g(y)\le C_2\mu(a)\quad\text{and}\quad |g(x)-g(y)|\le C_2\mu(a)d_\theta(x,y).
\end{align}
  \item[(c)] Let $\phi:Y\to\R$ be $d_\theta$-Lipschitz with $\int\phi\,d\mu = 0$.  Then
	\[
|P^n\phi|_\infty \le C_2\gamma^n\|\phi\|_\theta \enspace\text{for all $n\ge1$}.
\]
\end{itemize}
\end{lemma}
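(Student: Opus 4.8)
The plan is to establish the three claims essentially as standard facts about transfer operators of uniformly expanding (Gibbs--Markov) maps, tracking carefully how the output constants $\theta,\gamma,C_2$ depend on the input data $\lambda,\eta,C_1$. First I would prove (a). Iterating the expansion bound (2) gives $d_M(x,y)\le\lambda^{-s(x,y)}\diam M=\lambda^{-s(x,y)}$ for $x,y\in Y$, since the first $s(x,y)$ iterates of $x$ and $y$ under $F$ stay in a common partition element and (2) contracts distances backwards by at least $\lambda$ at each step. Raising to the power $\eta$ yields $d_M(x,y)^\eta\le(\lambda^{-\eta})^{s(x,y)}$, so (a) holds with the choice $\theta=\lambda^{-\eta}\in(0,1)$, which depends continuously on $\lambda$ and $\eta$ as required.

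Next I would prove (b). Since $F|_a:a\to Y$ is a bijection with $g|_a=d\mu|_a/d(\mu\circ F|_a)$, the identity $\int_a g\,d(\mu\circ F)=\mu(a)$ together with $\mu(Y)=1$ controls the average size of $g$ on $a$; the distortion bound (4) on $\zeta$ then upgrades this to the pointwise bounds. Concretely, condition (4) together with part (a) gives $|\log g(x)-\log g(y)|\le C_1 d_M(Fx,Fy)^\eta\le C_1 d_\theta(Fx,Fy)=C_1\theta^{-1}d_\theta(x,y)$ for $x,y\in a$ (using $s(Fx,Fy)=s(x,y)-1$), so $g$ has bounded multiplicative oscillation on each $a$; exponentiating and combining with the average bound produces a constant $C_2$, depending continuously on $\lambda,\eta,C_1$, for which both inequalities in~\eqref{eq-GM} hold. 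The Lipschitz estimate follows from the log-Lipschitz bound via $|g(x)-g(y)|\le g(y)|e^{|\log g(x)-\log g(y)|}-1|$ and the elementary inequality $|e^t-1|\le C|t|$ on a bounded range.

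Finally, part (c) is the exponential decay of correlations, and this is the step I expect to be the main obstacle, since it is where the quantitative dependence of $\gamma$ on the data must be extracted rather than merely asserted. The standard route is to show that $P$ acts as a contraction on the space of $d_\theta$-Lipschitz functions: using the explicit formula $(P\phi)(y)=\sum_{a}g(y_a)\phi(y_a)$ and the Gibbs bounds from (b), one derives a Lasota--Yorke type inequality $\|P^n\phi\|_\theta\le C\theta^n\|\phi\|_\theta+C|\phi|_\infty$ together with the invariance $P(\text{constants})$ controlling the $L^1$ part, and then invokes the spectral gap for $P$ on $(d_\theta\text{-Lipschitz},\|\cdot\|_\theta)$. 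Since $\phi$ has mean zero, $1$ is not in the relevant part of the spectrum, and $|P^n\phi|_\infty\le\|P^n\phi\|_\theta\le C_2\gamma^n\|\phi\|_\theta$ with $\gamma<1$. The delicate point is ensuring $\gamma$ and $C_2$ depend only on $\lambda,\eta,C_1$ and not on finer features of $F$; this is what makes the estimate usable uniformly across the family $T_\eps$ later. I would obtain this by carrying the constants explicitly through the Lasota--Yorke step and the coupling (or cone) argument that yields the gap, rather than citing an abstract spectral-gap theorem.
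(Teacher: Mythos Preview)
Your approach to (a) matches the paper exactly: set $\theta=\lambda^{-\eta}$ and iterate condition~(2). For (c), the paper likewise avoids an abstract spectral-gap citation and instead invokes an explicit coupling estimate from~\cite{KKMsub}, precisely for the uniformity-in-$\eps$ reason you identify; your Lasota--Yorke route would also work provided the constants are tracked.

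There is, however, a genuine gap in your treatment of (b). Condition~(4) bounds the log-H\"older seminorm of $\zeta=dm|_Y/d(m|_Y\circ F)$, the Jacobian with respect to the \emph{reference} measure $m$, whereas $g|_a=d\mu|_a/d(\mu\circ F|_a)$ is the Jacobian with respect to the \emph{invariant} measure $\mu$. These differ by the factor $h/(h\circ F)$ where $h=d\mu/dm$ is the invariant density, so your assertion that condition~(4) directly gives $|\log g(x)-\log g(y)|\le C_1 d_M(Fx,Fy)^\eta$ is not justified: you first need $|\log h|_\theta$ bounded in terms of $\lambda,\eta,C_1$ alone. This is a standard fact---the paper cites \cite[Proposition~2.3]{KKMsub} for exactly this step---but it is a real step, not a renaming, and it is where the continuous dependence of $K$ (hence $C_2$) on the data actually enters. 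Once $|\log g|_\theta\le K$ is in hand, the remainder of your argument for (b) is correct; the paper uses the inequality $t-1\le t\log t$ where you use $|e^t-1|\le C|t|$, which is a cosmetic difference.
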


\begin{proof}  
Choose $\theta=\lambda^{-\eta}$.  Let $n=s(x,y)$.
By condition~(2),
\[
1\ge \diam Y\ge d_M(F^nx,F^ny)\ge \lambda^nd_M(x,y)=(\theta^{1/\eta})^{-n}d_M(x,y).
\]
Hence $d_M(x,y)^\eta\le \theta^n =d_\theta(x,y)$ proving (a).

By condition~(4) and~\cite[Proposition~2.3]{KKMsub}, there is a constant $K$ depending continuously on $\lambda$, $\eta$ and $C_1$ such that
$|\log g|_\theta\le K$.  Hence for $y\in a$, $a\in\alpha$,
\[
\mu(a)=\int_a g\,d\mu\circ F\ge \inf_a g|_a\, \mu(Fa)= \inf g|_a \ge e^{-K}g(y),
\]
so $g|_a \le e^{K}\mu(a)$.
Next, we note the inequality
$t-1\le t\log t$ which is valid for all $t\ge1$.
Let $x,y\in a$ and suppose without loss that $g(y)\le g(x)$.
Setting $t=g(x)/g(y)\ge1$,
\begin{align*}
\frac{g(x)}{g(y)}-1
& \le \frac{g(x)}{g(y)}\log\frac{g(x)}{g(y)}
\le e^{K} K d_\theta(x,y).
\end{align*}
Hence
$g(x)-g(y)\le g(y) e^{K} K d_\theta(x,y)  
\le e^{2K}K \mu(a) d_\theta(x,y)$.
Hence part~(b) holds with $C_2=e^{2K}K$.

Finally, part~(c) follows for example from~\cite[Proposition~2.5]{KKMsub}.
\end{proof}

\subsection{Explicit estimates for nonuniformly expanding maps}
\label{sec-est}

Throughout this subsection, we work with a fixed nonuniformly expanding map
$T$ satisfying assumptions (1)--(4) and such that $\tau\in L^p$ for some
$p>1$.

There is a standard procedure to pass from the $F$-invariant ergodic absolutely continuous probability measure $\mu$ on $Y$ to
a $T$-invariant ergodic absolutely continuous probability
measure $\nu$ on $M$. We briefly describe this procedure, since the construction
is required in the proof of Lemma~\ref{lem-moment}.
Define the Young tower~\cite{Young99}
\begin{align} \label{eq-tower}
\Delta=\{(y,\ell)\in Y\times\Z:0\le\ell<\tau(y)\},
\end{align}
with probability measure
$\mu_\Delta=\mu\times\{{\rm counting}\}/\int_Y \tau\,d\mu$.  Define $\pi_\Delta:\Delta\to M$, $\pi_\Delta(y,\ell)=T^\ell y$.  Then 
$\nu=(\pi_\Delta)_*\mu_\Delta$ is the desired probability measure on $M$.

In the remainder of this subsection, 
$L^q$ norms of functions defined on $Y$ are computed using $\mu$.
For functions on other spaces, the measures are indicated explicitly in the notation.

Given an observable $v:M\to\R^d$, we define the {\em induced observable} $V:Y\to\R$,
\[
V(y)=\sum_{\ell=0}^{\tau(y)-1}v(T^\ell y).
\]
If $v: M\to\R^d$ satisfies $\int_Mv\,d\nu=0$, then $\int_YV\,d\mu=0$.

\begin{prop}  \label{prop-V}
If $v:M\to\R^d$ is $d_M$-Lipschitz, then
$PV:Y\to\R^d$ is $d_\theta$-Lipschitz.

Moreover, 
\[
|V(y)|\le \tau(a)|v|_\infty, \quad
|V(x)-V(y)|\le C_0\theta^{-1}\tau(a)\Lip\, v\,d_\theta(x,y),
\;\text{for all $x,y\in a, a\in\alpha$},
\]
and
\[
|PV|_\infty \le C_2|\tau|_1|v|_\infty, \quad 
|PV|_\theta \le C_0C_2\theta^{-1}|\tau|_1\|v\|_\Lip.
\]
\end{prop}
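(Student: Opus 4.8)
The plan is to prove the four estimates in turn, using the definitions of $V$ and $PV$ together with the standard estimates assembled in Lemma~\ref{lem-wellknown}. Throughout I would fix $a\in\alpha$ and work with $x,y\in a$, recalling that $\theta=\lambda^{-\eta}\in(0,1)$ and that $C_2\ge1$ is the constant from Lemma~\ref{lem-wellknown}(b).

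\emph{Sup-norm bound on $V$.} This is immediate: since $V(y)=\sum_{\ell=0}^{\tau(y)-1}v(T^\ell y)$ is a sum of $\tau(y)=\tau(a)$ terms each bounded by $|v|_\infty$, the triangle inequality gives $|V(y)|\le \tau(a)|v|_\infty$.

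\emph{Lipschitz bound on $V$ in the symbolic metric.} For $x,y\in a$ I would write $|V(x)-V(y)|\le\sum_{\ell=0}^{\tau(a)-1}|v(T^\ell x)-v(T^\ell y)|\le \Lip\,v\sum_{\ell=0}^{\tau(a)-1}d_M(T^\ell x,T^\ell y)$. The point is now to replace the orbit distances $d_M(T^\ell x,T^\ell y)$ by something controlled by $d_\theta(x,y)$. Assumption~(3) bounds each term by $C_0\,d_M(Fx,Fy)$, and assumption~(2) combined with Lemma~\ref{lem-wellknown}(a) controls $d_M(Fx,Fy)$: indeed, if $n=s(x,y)$ then $Fx,Fy$ remain in a common partition element for $n-1$ further steps, so $d_M(Fx,Fy)^\eta\le d_\theta(Fx,Fy)=\theta^{\,n-1}=\theta^{-1}d_\theta(x,y)$ (using $d_M\le(\cdot)^\eta\le d_\theta$ via part~(a) applied to $Fx,Fy$, after noting $s(Fx,Fy)=s(x,y)-1$). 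Summing $\tau(a)$ identical bounds yields $|V(x)-V(y)|\le C_0\theta^{-1}\tau(a)\,\Lip\,v\,d_\theta(x,y)$, as claimed. I expect this is the main obstacle: one must be careful that the separation time decreases by exactly one under $F$ and that assumption~(3) is invoked with the correct $\ell$-range $0\le\ell<\tau(a)$.

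\emph{The two bounds on $PV$.} Recall $(PV)(y)=\sum_{a\in\alpha}g(y_a)V(y_a)$, where $y_a\in a$ is the preimage of $y$. For the sup-norm bound I would use $g(y_a)\le C_2\mu(a)$ from~\eqref{eq-GM} and the already-established $|V(y_a)|\le\tau(a)|v|_\infty$, giving $|PV|_\infty\le C_2|v|_\infty\sum_{a\in\alpha}\mu(a)\tau(a)=C_2|\tau|_1|v|_\infty$, since $\sum_a\mu(a)\tau(a)=\int_Y\tau\,d\mu=|\tau|_1$. For the $d_\theta$-Lipschitz bound on $PV$, I would estimate $(PV)(x)-(PV)(y)=\sum_{a}\bigl[g(x_a)V(x_a)-g(y_a)V(y_a)\bigr]$ termwise, splitting each difference as
\[
g(x_a)\bigl(V(x_a)-V(y_a)\bigr)+\bigl(g(x_a)-g(y_a)\bigr)V(y_a).
\]
The first piece is bounded using $g(x_a)\le C_2\mu(a)$ and the symbolic Lipschitz bound on $V$ together with the contraction $d_\theta(x_a,y_a)\le\theta\,d_\theta(x,y)$ (the preimage map contracts $d_\theta$ by exactly $\theta$), which cancels the $\theta^{-1}$ and contributes $C_0C_2\,\tau(a)\,\Lip\,v\,d_\theta(x,y)$ after summing; the second piece uses $|g(x_a)-g(y_a)|\le C_2\mu(a)d_\theta(x_a,y_a)\le C_2\mu(a)\theta\,d_\theta(x,y)$ from~\eqref{eq-GM} and $|V(y_a)|\le\tau(a)|v|_\infty$. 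Summing both contributions against $\sum_a\mu(a)\tau(a)=|\tau|_1$ and absorbing constants into $C_0C_2\theta^{-1}$, and bounding $|v|_\infty,\Lip\,v\le\|v\|_\Lip$, gives $|PV|_\theta\le C_0C_2\theta^{-1}|\tau|_1\|v\|_\Lip$. The assertion that $PV$ is $d_\theta$-Lipschitz follows at once from the finiteness of $|PV|_\infty$ and $|PV|_\theta$.
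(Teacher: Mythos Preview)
Your proposal is correct and follows essentially the same approach as the paper: the four estimates are proved in the same order using condition~(3), Lemma~\ref{lem-wellknown}(a), the shift $s(Fx,Fy)=s(x,y)-1$, the formula $(PV)(y)=\sum_a g(y_a)V(y_a)$, and the bounds~\eqref{eq-GM}. The only cosmetic difference is that in the $|PV|_\theta$ estimate you explicitly exploit the contraction $d_\theta(x_a,y_a)=\theta\,d_\theta(x,y)$, whereas the paper simply uses $d_\theta(x_a,y_a)\le d_\theta(x,y)$; both routes lead to the stated bound after absorbing constants (with $\|v\|_\Lip=|v|_\infty+\Lip\,v$).
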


\begin{proof}  
The estimate for $V(y)$ is immediate.
By condition~(3) and Lemma~\ref{lem-wellknown}(a),
\begin{align*}
& |V(x)-V(y)|  \le \Lip\, v\,\sum_{\ell=0}^{\tau(a)-1}d_M(T^\ell x,T^\ell y)
\le C_0\Lip\, v\,\sum_{\ell=0}^{\tau(a)-1}d_M(F x,F y)
\\ & \qquad  = C_0\tau(a)\Lip\, v\,d_M(F x,F y)
 \le C_0\tau(a)\Lip\, v\, d_\theta(F x,F y)^{1/\eta}
 \\ & \qquad  \le C_0\tau(a)\Lip\, v\,d_\theta(F x,F y)
 = C_0\theta^{-1}\tau(a)\Lip\, v\,d_\theta(x,y),
\end{align*}
completing the estimates for $V$.

Next, $(PV)(y)=\sum_{a\in\alpha}g(y_a)V(y_a)$, so
by~\eqref{eq-GM},
\[
|PV|_\infty \le C_2\sum_{a\in\alpha}\mu(a)\tau(a)|v|_\infty=C_2|\tau|_1|v|_\infty.
\]
Also,
\begin{align*}
& |(PV)(x)-(PV)(y)|  \le \sum_{a\in\alpha}|g(x_a)-g(y_a)||V(x_a)|+
\sum_{a\in\alpha}g(y_a)|V(x_a)-V(y_a)| \\
& \qquad  \le C_2\sum_{a\in\alpha}\mu(a)d_\theta(x,y)\tau(a)|v|_\infty+
C_2\sum_{a\in\alpha}\mu(a)C_0\theta^{-1}\tau(a)\Lip\,v\,d_\theta(x,y),
\end{align*}
yielding the estimate for $|PV|_\theta$.
\end{proof}

\begin{prop} \label{prop-mchi}
Let $p\ge1$.
There exists $m\in L^p(Y,\R^d)$ and $\chi\in L^\infty(Y,\R^d)$ such that
$V=m+\chi\circ F-\chi$, and $m\in \ker P$.
Moreover,
\begin{align*}
|m|_p & \le 
3C_3|\tau|_p\|v\|_\Lip, \quad \text{and} \quad
|\chi|_\infty  \le C_3|\tau|_1\|v\|_\Lip,
\end{align*}
where $C_3=2C_0C_22\theta^{-1}(1-\gamma)^{-1}$.
\end{prop}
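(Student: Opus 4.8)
The plan is to use Gordin's martingale--coboundary decomposition, realising the coboundary term as a Neumann-type series in the transfer operator $P$. The key preliminary observation is that, although $V$ is only controlled \emph{locally} on each partition element (the bounds in Proposition~\ref{prop-V} degrade like $\tau(a)$ and so $V$ is not globally $d_\theta$-Lipschitz), a single application of $P$ smooths $V$ into a genuinely $d_\theta$-Lipschitz function $PV$, whose $\|\cdot\|_\theta$-norm is globally controlled by $|\tau|_1\|v\|_\Lip$. This is exactly what makes the exponential contraction in Lemma~\ref{lem-wellknown}(c) applicable, and it is the point on which the whole argument turns.

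Concretely, I would set $\chi=\sum_{n=1}^\infty P^nV$. Since $\int_Y V\,d\mu=0$ we have $\int_Y PV\,d\mu=0$, so $PV$ is mean zero and $d_\theta$-Lipschitz; applying Lemma~\ref{lem-wellknown}(c) to $PV$ gives $|P^nV|_\infty=|P^{n-1}(PV)|_\infty\le C_2\gamma^{\,n-1}\|PV\|_\theta$ for all $n\ge1$. Hence the series converges absolutely in $L^\infty$ with $|\chi|_\infty\le C_2(1-\gamma)^{-1}\|PV\|_\theta$. Reading off $\|PV\|_\theta=|PV|_\infty+|PV|_\theta\le 2C_0C_2\theta^{-1}|\tau|_1\|v\|_\Lip$ from Proposition~\ref{prop-V} (using $C_0\theta^{-1}\ge1$ and $|v|_\infty\le\|v\|_\Lip$) then yields the claimed bound $|\chi|_\infty\le C_3|\tau|_1\|v\|_\Lip$ with $C_3=2C_0C_2^2\theta^{-1}(1-\gamma)^{-1}$.

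I then define $m=V-\chi\circ F+\chi$, so that $V=m+\chi\circ F-\chi$ holds by construction. To verify $m\in\ker P$ I use the two basic identities for the normalised transfer operator, namely $P(\psi\circ F)=\psi$ (since $P1=1$) and the telescoping $P\chi=\sum_{n\ge1}P^{n+1}V=\chi-PV$, from which $Pm=PV-P(\chi\circ F)+P\chi=PV-\chi+(\chi-PV)=0$. For the $L^p$ bound I estimate $|m|_p\le|V|_p+|\chi\circ F|_p+|\chi|_p\le|V|_p+2|\chi|_\infty$, where $|\chi\circ F|_p=|\chi|_p$ by $F$-invariance of $\mu$. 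The pointwise bound $|V(y)|\le\tau(y)|v|_\infty$ from Proposition~\ref{prop-V} gives $|V|_p\le|\tau|_p|v|_\infty\le|\tau|_p\|v\|_\Lip$, and since $|\tau|_1\le|\tau|_p$ and $C_3\ge1$, adding the two copies of $|\chi|_\infty$ produces $|m|_p\le 3C_3|\tau|_p\|v\|_\Lip$.

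I expect the only genuine subtlety to be the smoothing step: one must apply the contraction estimate to $PV$ rather than to $V$, since $V$ itself fails to be $d_\theta$-Lipschitz. Once that is in place, establishing convergence of the defining series for $\chi$, the $Pm=0$ identity, and the constant bookkeeping are all routine.
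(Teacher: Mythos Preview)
Your proposal is correct and follows essentially the same route as the paper: define $\chi=\sum_{n\ge1}P^nV$, use the smoothing step (applying Lemma~\ref{lem-wellknown}(c) to $PV$ rather than $V$) together with the $\|PV\|_\theta$ bound from Proposition~\ref{prop-V} to control $|\chi|_\infty$, then set $m=V-\chi\circ F+\chi$ and bound $|m|_p$ via $|V|_p+2|\chi|_\infty$. Your write-up is in fact slightly more explicit than the paper's in verifying $Pm=0$ and in the constant bookkeeping (you also correctly read the constant as $C_3=2C_0C_2^2\theta^{-1}(1-\gamma)^{-1}$).
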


\begin{proof}
By Proposition~\ref{prop-V} and Lemma~\ref{lem-wellknown}(c) with
$\phi=PV$, for $n\ge1$,
\begin{align*}
	|P^nV|_\infty & \le C_2\gamma^{n-1}\|PV\|_\theta
	\le 2C_0C_2^2\theta^{-1}\gamma^{n-1}|\tau|_1\|v\|_\Lip.
\end{align*}
It follows that $\chi=\sum_{k=1}^\infty P^kV$ lies in $L^\infty$ and
$|\chi|_\infty\le C_3|\tau|_1\|v\|_\Lip$.

Write $V=m+\chi\circ F-\chi$; then $m\in L^p$ and $Pm=0$.
Finally, $|m|_p\le |V|_p+2|\chi|_\infty\le |\tau|_p|v|_\infty+2|\chi|_\infty\le 3C_3|\tau|_p\|v\|_\Lip$.
\end{proof}

\begin{cor} \label{cor-Burk}
Define $V_n=\sum_{j=0}^{n-1}V\circ F^j$.
Let $p>1$.
There exists a constant $C_4\ge 1$ depending only on $p$ and $C_3$ such that
\[
\bigl|\max_{1\le j\le n}|V_j|\,\bigr|_p \le 
	C_4|\tau|_p\|v\|_\Lip\ n^{\max\{1/2,1/p\}}. 
\]
\end{cor}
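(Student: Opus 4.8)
The plan is to exploit the martingale--coboundary decomposition already furnished by Proposition~\ref{prop-mchi} and then to invoke a Burkholder-type maximal inequality. Writing $V=m+\chi\circ F-\chi$ and summing along $F$-orbits, the coboundary telescopes:
\[
V_n=\sum_{j=0}^{n-1}V\circ F^j=M_n+\chi\circ F^n-\chi,\qquad M_n:=\sum_{j=0}^{n-1}m\circ F^j.
\]
Since $|\chi|_\infty\le C_3|\tau|_1\|v\|_\Lip$ is a fixed bound, we have $\max_{1\le j\le n}|V_j|\le \max_{1\le j\le n}|M_j|+2|\chi|_\infty$, so it suffices to control $\bigl|\max_{1\le j\le n}|M_j|\bigr|_p$ and add $2|\chi|_\infty$ at the end.

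The key structural input is $Pm=0$. Recalling that $\mu$-conditional expectation onto $F^{-1}\mathcal{B}$ is implemented by the transfer operator through $E_\mu[\,\cdot\mid F^{-1}\mathcal{B}]=(P\,\cdot)\circ F$, the identity $Pm=0$ says precisely that $(m\circ F^j)$ is a reverse martingale difference sequence with respect to the decreasing filtration $\mathcal{F}_k=(F^k)^{-1}\mathcal{B}$. Consequently $(M_n)$ is, after the standard time reversal, a martingale, and the Burkholder--Davis--Gundy inequality provides a constant $C_p$ with
\[
\bigl|\max_{1\le j\le n}|M_j|\bigr|_p\le C_p\,\Bigl|\Bigl(\sum_{j=0}^{n-1}|m\circ F^j|^2\Bigr)^{1/2}\Bigr|_p.
\]
For the $\R^d$-valued $m$ one applies this componentwise and recombines, at worst introducing a dimensional factor absorbed into the constant.

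It remains to estimate the $L^p$ norm of the quadratic variation in the two ranges of $p$. For $p\ge2$, Minkowski's inequality in $L^{p/2}$ together with the $F$-invariance of $\mu$ (so that $|m\circ F^j|_p=|m|_p$) yields
\[
\Bigl|\sum_{j=0}^{n-1}|m\circ F^j|^2\Bigr|_{p/2}\le \sum_{j=0}^{n-1}\bigl||m\circ F^j|^2\bigr|_{p/2}=n|m|_p^2,
\]
so the quadratic variation has $L^p$ norm at most $n^{1/2}|m|_p$. For $1<p\le2$ I instead use the elementary comparison $\|a\|_{\ell^2}\le\|a\|_{\ell^p}$ to bound the quadratic variation pointwise by $\bigl(\sum_{j=0}^{n-1}|m\circ F^j|^p\bigr)^{1/p}$, whose $p$-th power integrates (again by invariance) to $n|m|_p^p$, giving $L^p$ norm $n^{1/p}|m|_p$. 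In both cases $\bigl|\max_{1\le j\le n}|M_j|\bigr|_p\le C_p\,n^{\max\{1/2,1/p\}}|m|_p$.

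To conclude, I substitute $|m|_p\le 3C_3|\tau|_p\|v\|_\Lip$ from Proposition~\ref{prop-mchi}, absorb the coboundary term via $2|\chi|_\infty\le 2C_3|\tau|_1\|v\|_\Lip\le 2C_3|\tau|_p\|v\|_\Lip$ (using $|\tau|_1\le|\tau|_p$ for the probability measure $\mu$ and $n^{\max\{1/2,1/p\}}\ge1$), and collect the constants into a single $C_4$ depending only on $p$ and $C_3$. I expect the only genuinely delicate point to be setting up the reverse-martingale orientation correctly so that the Burkholder--Davis--Gundy inequality is legitimately applicable; once the decomposition $V_n=M_n+\chi\circ F^n-\chi$ and the filtration are in place, the remaining estimates are routine.
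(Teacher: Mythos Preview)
Your proof is correct and follows essentially the same route as the paper: decompose $V_n=M_n+\chi\circ F^n-\chi$ via Proposition~\ref{prop-mchi}, invoke Burkholder's inequality for the martingale part (using $Pm=0$), and absorb the bounded coboundary. The paper's proof is terser---it simply cites Burkholder's inequality and a reference for the bound $\bigl|\max_{1\le j\le n}|m_j|\bigr|_p\le C(p)|m|_p\,n^{\max\{1/2,1/p\}}$---whereas you spell out the reverse-martingale orientation and the two-regime estimate of the quadratic variation, but the argument is the same.
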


\begin{proof}
First note that $V_n=m_n+\chi\circ F^n-\chi$ where
$m_n=\sum_{j=0}^{n-1}m\circ F$.
Since $m\in\ker P$, an application of Burkholder's inequality~\cite{Burkholder73} shows that 
\[
	\bigl|\max_{1\le j\le n}|m_j|\bigr|_p\le C(p)|m|_p n^{\max\{1/2,1/p\}},
\]
(see for example the proof of~\cite[Proposition~4.3]{MZ15}).
Hence 
\[
\bigl|\max_{1\le j\le n}|V_j|\,\bigr|_p
\le C(p)|m|_p n^{\max\{1/2,1/p\}}+ 2|\chi|_\infty.
\]
The result follows from Proposition~\ref{prop-mchi}
with $C_4=5C_3C(p)$.
\end{proof}

\begin{lemma} \label{lem-moment}  
Let $p>1$.  Let $v_n=\sum_{j=0}^{n-1}v\circ T^j$.  Then
\[
	\bigl|\max_{j\le n}|v_j|\bigr|_{L^{p-1}(\nu)}
	\le 
	5C_4|\tau|_p^{p/(p-1)}\|v\|_\Lip\,n^{\max\{1/2,1/p\}}.
\]
\end{lemma}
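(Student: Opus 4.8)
The plan is to transfer the problem from the base map $F$, where Corollary~\ref{cor-Burk} already controls the induced Birkhoff sums $V_m$, to the original map $T$ via the tower representation $\nu=(\pi_\Delta)_*\mu_\Delta$. We may assume $\int_M v\,d\nu=0$ (as holds for the observables $v_{\eps,x}$ to which the lemma is applied), so that $\int_Y V\,d\mu=0$ and Proposition~\ref{prop-mchi} and Corollary~\ref{cor-Burk} apply to $V$. Since $\pi_\Delta(y,\ell)=T^\ell y$ and $\mu_\Delta=\mu\times\{\text{counting}\}/|\tau|_1$, the pushforward relation gives
\[
\int_M\bigl(\max_{j\le n}|v_j|\bigr)^{p-1}\,d\nu
=\frac{1}{|\tau|_1}\int_Y\sum_{\ell=0}^{\tau(y)-1}\bigl(\max_{j\le n}|v_j(T^\ell y)|\bigr)^{p-1}\,d\mu(y),
\]
so it suffices to estimate the right-hand side.

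Next I would decompose the orbit segment of length $j\le n$ starting at a tower point $T^\ell y$ into three parts: the partial block running until the first return to $Y$ (of length $\tau(y)-\ell\le\tau(y)$); a run of complete induced blocks, which assembles into an induced Birkhoff sum $V_k(Fy)$ with $k\le n$ since $\tau\ge1$; and a final partial block, of length at most $\tau(F^{k+1}y)\le\max_{1\le i\le n}\tau(F^iy)$. This yields the pointwise bound
\[
\max_{j\le n}|v_j(T^\ell y)|
\le |v|_\infty\,\tau(y)+\max_{1\le m\le n}|V_m(Fy)|+|v|_\infty\max_{1\le i\le n}\tau(F^iy).
\]
I would then estimate the $L^{p-1}$ contribution of each of the three terms separately, using subadditivity of $t\mapsto t^{p-1}$ when $p<2$ and Minkowski's inequality when $p\ge2$, so that only a $p$-dependent constant is lost in splitting.

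The first term integrates directly: $\frac{1}{|\tau|_1}\int_Y\tau(y)(\tau(y)|v|_\infty)^{p-1}\,d\mu=|v|_\infty^{p-1}|\tau|_p^p/|\tau|_1$, giving an $L^{p-1}$ contribution $\le|v|_\infty|\tau|_p^{p/(p-1)}$ since $|\tau|_1\ge1$. For the second and third terms the key maneuver is H\"older's inequality with exponents $p$ and $p/(p-1)$ applied to the weight $\tau(y)$, combined with the $F$-invariance of $\mu$ to decouple $\tau(y)$ from the orbit quantities evaluated at $Fy$. For the second term this produces $|\tau|_p\bigl|\max_{m\le n}|V_m|\bigr|_p^{p-1}$, and Corollary~\ref{cor-Burk} bounds $\bigl|\max_{m\le n}|V_m|\bigr|_p\le C_4|\tau|_p\|v\|_\Lip\,n^{\max\{1/2,1/p\}}$, giving contribution $\le C_4|\tau|_p^{p/(p-1)}\|v\|_\Lip\,n^{\max\{1/2,1/p\}}$. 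For the third term I would use the crude bound $(\max_{1\le i\le n}\tau\circ F^i)^p\le\sum_{i=1}^n\tau^p\circ F^i$ with $F$-invariance to get $\bigl|\max_{1\le i\le n}\tau\circ F^i\bigr|_p\le n^{1/p}|\tau|_p$, giving contribution $\le|v|_\infty|\tau|_p^{p/(p-1)}n^{1/p}$. Summing the three, and using $n^{1/p}\le n^{\max\{1/2,1/p\}}$, $|v|_\infty\le\|v\|_\Lip$ and $C_4\ge1$, produces the stated bound with constant $5C_4$.

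The main obstacle is the control of the two partial blocks at the ends of the orbit segment: these force the passage from $L^p(\mu)$ estimates for $F$ down to $L^{p-1}(\nu)$ estimates for $T$, and it is precisely the H\"older step against the weight $\tau(y)$ --- together with the moment assumption $\tau\in L^p$ --- that accounts for the loss of one power (the appearance of $|\tau|_p^{p/(p-1)}$ rather than $|\tau|_p$) and fixes the exponent $p-1$ on the left. Keeping these partial-block contributions of the same order as the main induced-sum term, rather than letting them dominate, is the delicate point.
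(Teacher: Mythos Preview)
Your proposal is correct and follows essentially the same route as the paper: lift to the tower, decompose the Birkhoff sum into an induced Birkhoff sum $V_k$ plus two boundary (partial-block) terms each controlled by a return time, then use H\"older against the weight $\tau$ to pass from $L^p(\mu)$ estimates to $L^{p-1}(\nu)$ estimates, invoking Corollary~\ref{cor-Burk} for the main term. The only substantive difference is in the treatment of the term $\max_{1\le i\le n}\tau(F^iy)$: you apply H\"older directly and then the crude bound $(\max_i\tau\circ F^i)^p\le\sum_i\tau^p\circ F^i$ with $F$-invariance, whereas the paper instead estimates this term in $L^p(\mu_\Delta)$ using the transfer-operator bound $|P\tau|_\infty\le C_2|\tau|_1$ from Proposition~\ref{prop-V} to decouple the weight $\tau$ from $\tau^p\circ F^j$. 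Your route is marginally more elementary (it avoids the $P\tau$ estimate), while the paper's route makes the role of the Gibbs--Markov structure slightly more visible; both yield the same $n^{1/p}$ scaling and the same final bound up to the $p$-dependent constant absorbed in $C_4$.
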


\begin{proof}  
Let $q=p-1$.
Define the tower $\Delta$ as in~\eqref{eq-tower} with
tower map $f:\Delta\to\Delta$  where 
\[
f(y,\ell)=\begin{cases} (y,\ell+1), & \ell\le \tau(y)-2
\\ (Fy,0), & \ell=\tau(y)-1 \end{cases}.
\]
Recall that $\mu_\Delta=\mu\times{\rm counting}/\bar \tau$ on $\Delta$
where $\bar\tau=\int_Y \tau\,d\mu$.
Also, $\nu=(\pi_\Delta)_*\mu_\Delta$
where $\pi_\Delta:\Delta\to M$ is the projection
$\pi_\Delta(y,\ell)=T^\ell y$.

Let $\hat v=v\circ \pi_\Delta$ and
define $\hat v_n=\sum_{j=0}^{n-1}\hat v\circ f^j$.  
Then $\int_M |v_n|^q\,d\nu=\int_\Delta |\hat v_n|^q\,d\mu_\Delta$.

	Next, let $N_n:\Delta\to\{0,1,\dots,n\}$ be the number of laps by time $n$,
	\[
		N_n(y,\ell)=\#\{j\in\{1,\dots,n\}:f^j(y,\ell)\in Y\times\{0\}\}.
\]
Then
\[
\hat v_n(y,\ell)=V_{N_n(y,\ell)}(y)+H\circ f^n(y,\ell)-H(y,\ell),
\]
where $H(y,\ell)=\hat v_\ell(y,0)$.
Note that 
$|H(y,\ell)|\le |v|_\infty\tau(y)$ for all $(y,\ell)\in\Delta$.

Now $f^n(y,\ell)=(F^{N_n(y,\ell)}y,\ell+n-\tau_{N_n(y,\ell)}(y))$, so
\begin{align*}
	 \max_{j\le n}|H\circ f^j & (y,\ell)|
	  \le |v|_\infty\max_{j\le n}\tau(F^{N_j(y,\ell)}y)
	  \le |v|_\infty\max_{j\le n}\tau(F^jy)
\\ & \le |v|_\infty \tau(y)
+ |v|_\infty \max_{1\le j\le n}\tau(F^jy)=
|v|_\infty \hat\tau(y,\ell)
+ |v|_\infty \max_{1\le j\le n}\hat\tau(F^jy,\ell),
\end{align*}
where $\hat\tau:\Delta\to\Z^+$ is given by
$\hat\tau(y,\ell)=\tau(y)$.

We estimate the first term in $L^q(\mu_\Delta)$ and the second term in 
$L^p(\mu_\Delta)$.  Using the definition of $\mu_\Delta$ and the fact that
$\bar\tau\ge1$, 
\begin{align*}
	\int_\Delta \hat\tau^q\,d\mu_\Delta =(1/\bar\tau)\int_Y\tau^{q+1}\,d\mu
	\le \int_Y\tau^p\,d\mu,
\end{align*}
so $|\hat\tau|_{L^q(\mu_\Delta)}\le |\tau|_p^{p/(p-1)}$.
Also,
\begin{align*}
	\int_\Delta\max_{1\le j\le n}\hat\tau(F^jy)^p & \,d\mu_\Delta
 =(1/\bar\tau) \int_Y\tau\max_{1\le j\le n}\tau^p\circ F^j\,d\mu
\le(1/\bar\tau)\sum_{j=1}^n \int_Y\tau\,\tau^p\circ F^j\,d\mu
\\ & =(1/\bar\tau)\sum_{j=1}^n \int_Y P\tau\,\tau^p\circ F^{j-1}\,d\mu
\le (1/\bar\tau)\sum_{j=1}^n |P\tau|_\infty \int_Y\tau^p\circ F^{j-1}\,d\mu
\\ & = (1/\bar\tau)  n |P\tau|_\infty \int_Y\tau^p\,d\mu
\le  C_2n \int_Y\tau^p\,d\mu,
\end{align*}
where we used Proposition~\ref{prop-V} with $v=1$ (and hence $V=\tau$) for the final inequality.
Hence
\[
	\bigl|\max_{1\le j\le n}\hat\tau(F^jy)\bigr|_{L^q(\mu_\Delta)} \le
	\bigl|\max_{1\le j\le n}\hat\tau(F^jy)\bigr|_{L^p(\mu_\Delta)} \le C_2^{1/p}n^{1/p}|\tau|_p.
\]
Combining these estimates, we obtain that
\[
	|H|_{L^q(\mu_\Delta)}\le 
	\bigl|\max_{j\le n}|H\circ f^j|\bigr|_{L^q(\mu_\Delta)}\le 
	2C_2^{1/p}|v|_\infty|\tau|_p^{p/(p-1)}n^{1/p}.
\]

  Next, using H\"older's inequality,
  \begin{align*}
	  \int_\Delta \max_{j\le n} |V_{N_j(y,\ell)}(y)|^q\,d\mu_\Delta
& \le \int_\Delta \max_{j\le n}|V_j(y)|^q\,d\mu_\Delta
=(1/\bar\tau) \int_Y \tau \max_{j\le n}|V_j|^q\,d\mu
\\ & \le   \ |\tau|_p \ \bigl|\max_{j\le n}|V_j|^q\bigr|_{p/q}
 =  \ |\tau|_p \ \bigl|\max_{j\le n}|V_j|\bigr|_p^q.
\end{align*}
By Corollary~\ref{cor-Burk},
\[
\Bigl|\max_{j\le n}|V_{N_j}|\Bigr|_{L^q(\mu_\Delta)}
\le C_4 |\tau|_p^{p/(p-1)}\|v\|_\Lip\ n^{\max\{1/2,1/p\}}.
\]

By the triangle inequality, using that $C_2^{1/p}\le C_4$,
\begin{align*}
	\bigl||\max_{j\le n}v_n|\bigr|_{L^{p-1}(\nu)}
& =\bigl|\max_{j\le n}|\hat v_n|\bigr|_{L^q(\mu_\Delta)}\le 
 5C_4|\tau|_p^{p/(p-1)}\|v\|_\Lip\  n^{\max\{1/2,1/p\}},
\end{align*}
as required.
\end{proof}

\subsection{Proof of Theorem~\ref{thm-NUE}}
\label{sec-pf}

Define $v_{\eps,x}$ and $v_{\eps,x,n}$ as in Section~\ref{sec-gen}.
Note that $\Lip\,v_{\eps,x}\le 2L$ for all $\eps,x$ and $\int_M v_{\eps,x}\,d\nu_\eps=0$.

It follows from Lemma~\ref{lem-moment} that
\[
	\bigl|\max_{j\le n}|v_{\eps,x,j}|\bigr|_{L^{p-1}(\nu_\eps)}
	\le 10C_4L|\tau_\eps|_p^{p/{(p-1)}} n^{\max\{1/2,1/p\}},
\]
for all $\eps\ge0$, $x\in\R$, $n\ge1$.
By~\eqref{eq-delta},
\begin{align*}
	\int_M \delta_{1,\eps}^{p+d-1} \, d\nu_\eps & \le
	\Gamma \eps^{p-1} \sup_{x\in E} \int_M \max_{n\le 1/\eps}|v_{\eps,x,n}|^{p-1} \, d\nu_\eps 
	 \le \Gamma C_4^{p-1}|\tau_\eps|_p^p \eps^{p-1} \eps^{-(p-1)\max\{1/2,1/p\}}
	\\ & = \Gamma C_4^{p-1}|\tau_\eps|_p^p \eps^{p-1} \eps^{\min\{-(p-1)/2,-(p-1)/p\}}
	  = \Gamma C_4^{p-1}|\tau_\eps|_p^p  \eps^{\min\{(p-1)/2,(p-1)^2/p\}}.
\end{align*}
We obtain the same estimate for $\delta_{2,\eps}$ replacing $v_{\eps,x}$, $v_{\eps,x,n}$ by
$V_{\eps,x}$, $V_{\eps,x,n}$.

\section{Examples: Nonuniformly expanding maps}
\label{sec-egNUE}

\begin{eg}[Intermittent maps] \label{eg-LSV}
Let $M=[0,1]$ with Lebesgue measure $m$ and consider the intermittent maps $T:M\to M$ given by
\begin{align} \label{eq-LSV}
Tx=\begin{cases}
x(1+2^a x^a), & x\in[0,\frac12] \\ 2x-1, & x\in(\frac12,1] \end{cases}.
\end{align}
These were studied in~\cite{LiveraniSaussolVaienti99}.
Here $a>0$ is a parameter.   For $a\in(0,1)$ there
is a unique absolutely continuous invariant probability measure with
$C^\infty$ nonvanishing density.

We consider a family $T_\eps:M\to M$, $\eps \in [0,\eps_0)$, of such intermittent maps with parameter
$a_\eps\in(0,1)$
depending continuously on $\eps$.   Let $\nu_\eps$ denote 
the corresponding family of absolutely continuous invariant probability measures.

For each $\eps$, we take $Y=[\frac12,1]$ and
let $\tau_\eps:Y\to\Z^+$ be the first return time $\tau_\eps(y)=\inf\{n\ge1:T_\eps^ny\in Y\}$.
Define the first return map $F_\eps=T_\eps^{\tau_\eps}:Y\to Y$.
Let $\alpha_\eps=\{Y_\eps(n),\,n\ge1\}$
where $Y_\eps(n)=\{y\in Y:\tau_\eps(y)=n\}$.

It is standard that each $T_\eps$ is a nonuniformly expanding map in the sense of Section~\ref{sec-NUE} with $\tau_\eps\in L^p$ for all $p<1/a_\eps$.
Fix $p\in(1,1/a_0)$ and choose $0<a_-<a_0<a_+<1$ such that $p<1/a_+$.
  Without loss we can shrink $\eps_0$ so that $a_\eps\in[a_-,a_+]$
  for all $\eps\in[0,\eps_0)$.
	  We show that $T_\eps$, $\eps\in[0,\eps_0)$, satisfies Definition~\ref{def-uf} for this choice of $p$.

  Since $T'_\eps\ge1$ on $M$ and $T'_\eps=2$ on $Y$, it is immediate that conditions 
  (1)---(3) in Section~\ref{sec-NUE} are satisfied with $\lambda=2$ and $C_0=1$.
  
  Recall that $\zeta_{\eps} = \frac{dm|_Y}{dm|_Y \circ F_\eps}$. Note that 
  $\zeta_{\eps}(y) = 1/F_\eps'(y)$.
  For each $Y_\eps(n) \in \alpha_\eps$, define 
  the bijection $F_{\eps,n}=F_\eps|_{Y_\eps(n)}: Y_\eps(n) \to M$.
  Let $G_{\eps,n} = (F_{\eps,n}^{-1})'
  = \zeta_\eps \circ F_{\eps,n}^{-1}$.
  By \cite[Assumption~A2 and Theorem~3.1]{Korepanov15}, there is a constant $K$,
  depending only on $a_-$ and $a_+$, such that 
  $|(\log G_{\eps,n})'| \leq K$ for all $\eps\in[0,\eps_0)$.
  Hence
	  $|(\log  \zeta_{\eps}\circ F_{\eps,n}^{-1})'|
	  = |(\log G_{\eps,n})'| \le K$.
  By the mean value theorem, for $x,y \in Y_\eps(n)$,
  \[
    |\log \zeta_{\eps}(x) - \log \zeta_{\eps}(y)| 
    =
    |(\log  \zeta_{\eps}\circ F_{\eps,n}^{-1})(F_\eps x) - (\log \zeta_{\eps}\circ F_{\eps,n}^{-1})(F_\eps y)| 
    \leq 
    K  | F_{\eps} x - F_{\eps} y|.
  \]
  This proves condition (4) in Section~\ref{sec-NUE}, and so
  condition~(i) in Definition~\ref{def-uf} is satisfied.
  
  Define $x_1=\frac12$ and inductively $x_{n+1}<x_n$ (depending on $\eps$)
  with $T_\eps x_{n+1}=x_n$.
  Then $T_\eps(Y_\eps(n))=[x_n,x_{n-1}]$ for $n\ge2$ and it is standard that
  $x_n=O(1/n^\alpha)$ as a function of $n$.  By~\cite[Lemma~5.2]{Korepanov15},
  there is a constant $K$,
  depending only on $a_-$ and $a_+$, such that
  $x_n\le Kn^{-1/\alpha_+}$ for all $n\ge1$, $\eps\in[0,\eps_0)$.
	  Hence
	  \[
m(\tau_\eps>n)=m([1/2,(x_n+1)/2])=x_n/2\le Kn^{-1/\alpha_+}.
\]
Since $p<1/\alpha_+$ it follows that $\sup_{\eps\in[0,\eps_0)}\int_Y |\tau_\eps|^p \, dm<\infty$ so
 condition~(ii) in Definition~\ref{def-uf} is satisfied.

By~\cite{BaladiTodd15,Korepanov15}, $\nu_\eps$
is strongly statistically stable and the densities $\rho_\eps$ satisfy
$R_\eps=\int|\rho_\eps-\rho_0|\,dm=O(a_\eps - a_0)$.
Hence, by Corollary~\ref{cor-gen}, we obtain averaging in $L^1$ with respect to $\nu_\eps$ and also with respect to any absolutely continuous probability measure.

Finally, if $\eps\mapsto a_\eps$ is Lipschitz say, so that $R_\eps=O(\eps)$, then we obtain the rates described in 
Remark~\ref{rmk-rate} with $p=(1/a_0)-$.
\end{eg}

\begin{eg}[Logistic family] \label{eg-logistic}
We consider the family of quadratic maps $T:[-1,1]\to[-1,1]$ given by 
$T(x)=1-ax^2$, $a\in[-2,2]$, with $m$ taken to be Lebesgue measure.  

Let $b,c>0$.  The map $T$ satisfies the Collet-Eckmann condition~\cite{ColletEckmann83} with constants $b,c$ if
\begin{align} \label{eq-CE}
|(T^n)'(1)|\ge ce^{bn}\quad\text{ for all $n\ge0$}.
\end{align}
In this case, we write $a\in Q_{b,c}$.
The set of Collet-Eckmann parameters is given by $P_1=\bigcup_{b,c>0}Q_{b,c}$
and is a Cantor set of
positive Lebesgue measure~\cite{Jakobson81,BenedicksCarleson85}.
When $a\in P_1$, the map $T$
has an invariant set $\Lambda$ consisting of a finite union of intervals with an ergodic absolutely continuous invariant probability measure $\nu_a$.
The density for $\nu_a$ is bounded below on $\Lambda$ and lies in $L^{2-}$.
The invariant set attracts Lebesgue almost every trajectory in $[-1,1]$.

There is also an open dense set of parameters $P_0\subset[-2,2]$ for which
$T$ possesses a periodic sink attracting 
Lebesgue almost every trajectory in $[-1,1]$.
By Lyubich~\cite{Lyubich02}, $P_0\cup P_1$ has full measure. 
For $a\in P_0$, we let $\nu_a$ denote the invariant probability measure supported on the
periodic attractor, so we have a map $a\mapsto \nu_a$ defined on $P_0\cup P_1$.

It is clear that statistical stability
holds on $P_0$,
and that strong statistical stability fails everywhere in $P_0\cup P_1$.
Moreover, Thunberg~\cite[Corollary~1]{Thunberg01} shows that on any full measure subset of $E\subset [-2,2]$ the map $a\to\nu_a$ is not statistically stable
at any point of $P_1\cap E$.
On the other hand, 
Freitas \& Todd~\cite{FreitasTodd09} proved that strong statistical stability holds on $Q_{b,c}$ for all constants $b,c>0$.  That is, the map
$a\to\rho_a=d\nu_a/dm$ from $Q_{b,c}\to L^1$ is continuous.
(See also~\cite{Freitas05,Freitas10} for the same result restricted to the Benedicks-Carleson parameters~\cite{BenedicksCarleson85}.)

We consider families $\eps\to T_\eps$ where each $T_\eps$ is a quadratic map with
parameter $a=a_\eps$ depending continuously on $\eps$.
Fix $b,c>0$ such that $a_0\in Q_{b,c}$.
We claim that
\begin{align*}
\lim_{\stackrel{\eps\to0}{a_\eps\in Q_{b,c}}}\int\z\,d\nu_{a_\eps}=0.
\end{align*}
Moreover, using Corollary~\ref{cor-dm}  we obtain convergence in $L^1(\nu)$ for every absolutely continuous probability measure $\nu$.
Given the above results on strong statistical stability, it suffices to verify that $T_\eps$ is a uniform family of nonuniformly expanding maps.

For the Benedicks-Carleson parameters, the method in~\cite{Freitas05,Freitas10} is the approach of~\cite{Alves04} and we can apply Remark~\ref{rmk-AV}.  
In the general case, a different method exploiting negative Schwarzian derivative and Koebe spaces~\cite[Proof of Theorem~B in Section~6]{FreitasTodd09} shows that the conditions in~\cite{AlvesViana02} are satisfied.  
By Remark~\ref{rmk-AV}, this completes the proof of averaging with the possible exception of condition (3).  
However, a standard consequence of negative Schwarzian derivative and the Koebe distortion property (as discussed in~\cite[Lemma~4.1]{FreitasTodd09} and used
in~\cite[Remark~3.2]{FreitasTodd09}) is that bounded distortion holds at intermediate steps and not just at the inducing time as in condition~(4).  Hence there is a uniform constant $\tilde C_1$ such that 
$\frac{|T_\eps^jx-T_\eps^jy|}{\diam T_\eps^ja} \le \tilde C_1 \frac{|F_\eps x-F_\eps y|}{\diam Y_\eps}$ for all partition elements $a$, all $x,y\in a$ and all $j\le\tau_\eps(a)$.
In particular, 
$|T_\eps^jx-T_\eps^jy| \le (2\tilde C_1/\diam Y_\eps) |F_\eps x-F_\eps y|$, yielding condition~(3) uniformly in $\eps$.

Next, we discuss rates of convergence.
By~\cite[Lemma~4.1]{FreitasTodd09}, condition~(ii) in Definition~\ref{def-uf} is satisfied for any $p>1$.
Hence
$|\delta_\eps|_{L^q(\nu_\eps)}=O(\eps^{\frac12-})$.   If $\eps\mapsto a_\eps$ is $C^1$, then it follows from
Baladi {\em et al.}~\cite{BaladiBenedicksSchnellmann15} that $R\eps=\int|\rho_\eps-\rho_0|\,dm=O(\eps^{\frac12-})$.
By Remark~\ref{rmk-rate}, we obtain averaging with rate $O(\eps^{\frac12-})$ in $L^q(\nu_\eps)$ for all $q>0$ and in $L^1(\nu_0)$.
\end{eg}

\begin{eg}[Multimodal maps]
Freitas \& Todd~\cite{FreitasTodd09} also consider families of multimodal maps
where each critical point $c$ satisfies a Collet-Eckmann condition along the orbit of $Tc$ with constants uniform in $\eps$.
Hence the averaging result for the quadratic family in Example~\ref{eg-logistic} extends immediately to multimodal maps.
\end{eg}

\begin{eg}[Viana maps] \label{eg-viana}

Viana~\cite{Viana97} introduced a $C^3$ open class of multi-dimensional nonuniformly expanding maps $T_\eps:M\to M$.  For definiteness, we restrict attention to the case $M=S^1\times\R$.   
Let $S:M\to M$ be the map
$S(\theta,y)=(16\theta\bmod1,a_0+a\sin 2\pi\theta-y^2)$.
Here $a_0$ is chosen so that $0$ is a preperiodic point for the quadratic map $y\mapsto a_0-y^2$ and $a$ is fixed sufficiently small.
Let
$T_\eps$, $0\le\eps<\eps_0$ be a continuous family of $C^3$ maps sufficiently close to $S$.
It follows from~\cite{Alves00,AlvesViana02} that there is an interval $I\subset(-2,2)$ such that, for each $\eps\in[0,\eps_0)$, there is a unique absolutely continuous $T_\eps$-invariant ergodic probability measure $\nu_\eps$ supported in the interior of $S^1\times I$.  Moreover the invariant set $\Lambda_\eps=\supp\nu_\eps$ attracts almost every initial condition in $S^1\times I$.

By Alves \& Viana~\cite{AlvesViana02},  $\nu_0$ is strongly statistically stable.   Moreover, the inducing method of~\cite{AlvesLuzzattoPinheiro05}  and the arguments in~\cite{Alves04} apply to this example, so $T_\eps$ is a uniform family of nonuniformly expanding maps by Remark~\ref{rmk-AV}.
Also, Corollary~\ref{cor-dm} is applicable by Remark~\ref{rmk-dm}.
Hence we obtain averaging in $L^1(\nu_\eps)$ and in $L^1(\mu)$ for all absolutely continuous $\mu$.
\end{eg}

\section{Averaging for continuous time fast-slow systems}
\label{sec-flow}

Let $\phi_t^\eps:M\to M$, $0\le \eps<\eps_0$, be a family of semiflows
defined on the metric space $(M,d_M)$.
For each $\eps\ge0$, let $\nu_\eps$ denote
a $\phi_t^\eps$-invariant ergodic Borel probability measure.
Let $a:\R^d \times M\times[0,\eps_0)\to \R^d$
be a family of vector fields on $\R^d$ satisfying
conditions~\eqref{eq-L1}---\eqref{eq-L3}.

We consider the family of fast-slow systems
  \begin{align*}
& \dotx   = \eps a(\x,\y,\eps), \quad \x(0)=x_0,   \\
& \y(t)   = \phi_t^\eps y_0, 
  \end{align*}
where the initial condition $\x(0)=x_0$ is fixed throughout.
The initial condition $y_0\in M$ is again chosen randomly with respect to various measures that are specified in the statements of the results.

  Define $\hatx:[0,1]\to\R^d$ by setting
$\hatx(t)=\x(t/\eps)$.
Let $X:[0,1]\to\R^d$ be the solution to the ODE~\eqref{eq-ODE}
and define
\[
	\z = \sup_{t\in[0,1]}|\hatx(t)-X(t)|.
\]

Recall that $E=\{x\in\R^d:|x-x_0|\le L_1\}$.
As in Section~\ref{sec-order},
  define $\bar a(x,\eps)=\int_M a(x,y,\eps)\,d\nu_\eps(y)$ and let
$v_{\eps,x}(y)=a(x,y,\eps)-\bar a(x,\eps)$.
We define the \emph{order function} $\delta_\eps=\delta_{1,\eps}+\delta_{2,\eps}:M\to\R$ where
\begin{align*}
	\delta_{1,\eps} & = \sup_{x\in E} \sup_{0 \leq t \leq 1/\eps} 
  \eps|v_{\eps, x, t}| \qquad \text{ where } \qquad v_{\eps, x, t}=
  \int_0^t v_{\eps,x}\circ \phi_s^\eps\,ds, \\
	\delta_{2,\eps} & = \sup_{x\in E} \sup_{0 \leq t \leq 1/\eps} 
  \eps|V_{\eps, x, t}| \qquad \text{ where } \qquad V_{\eps, x, t}=
  \int_0^t (Dv_{\eps,x})\circ \phi_s^\eps\,ds.
\end{align*}

The next results is the continuous time analogue of Theorem~\ref{thm-gen}.
The proof is entirely analogous, and hence is omitted.

\begin{thm} \label{thm-genflow}  
Let $S_\eps  = \sup_{x\in E}|\int_M a(x,y,0)\,(d\nu_\eps-d\nu_0)(y)|+\eps$.
 Assume conditions~\eqref{eq-L1}---\eqref{eq-L3}.
If $\delta_\eps\le\frac12$,
then $\z  \le 6e^{2L}(\delta_\eps+S_\eps)$.
\end{thm}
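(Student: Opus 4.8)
The plan is to imitate the proof of Theorem~\ref{thm-gen}, replacing Birkhoff sums by Birkhoff integrals and summation by parts by ordinary integration by parts. First I would rescale time: writing $\hatx(t)=\x(t/\eps)$, the slow variable satisfies
\[
\tfrac{d}{dt}\hatx = a(\hatx,\phi_{t/\eps}^\eps y_0,\eps),\qquad \hatx(0)=x_0,
\]
so $|\tfrac{d}{dt}\hatx|\le|a|_\infty\le L_1$ and hence $\hatx(t)\in E$ for all $t\in[0,1]$. This a priori control keeps the trajectory in the region where hypotheses~\eqref{eq-L1}--\eqref{eq-L3} and the order function are valid, and it is the continuous-time substitute for the inductive step-by-step control in the discrete case.

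Next I would separate the fast fluctuations from the averaged drift via $a(x,y,\eps)=v_{\eps,x}(y)+\bar a(x,\eps)$ and introduce the near-identity correction built from the ergodic integral
\[
\Phi(x,t)=\int_0^t v_{\eps,x}(\phi_{s/\eps}^\eps y_0)\,ds=\eps\, v_{\eps,x,t/\eps}(y_0).
\]
The substitution $s\mapsto s/\eps$ sends the window $t\in[0,1]$ onto $[0,1/\eps]$, which is exactly the range over which $\delta_{1,\eps},\delta_{2,\eps}$ are defined, so that $|\Phi(x,t)|\le\delta_{1,\eps}$ and $|D\Phi(x,t)|\le\delta_{2,\eps}$ for $x\in E$, $t\in[0,1]$, while $\partial_t\Phi(x,t)=v_{\eps,x}(\phi_{t/\eps}^\eps y_0)$ recovers the oscillatory part of the vector field. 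Setting $\xi=\hatx-\Phi(\hatx,t)$ and using $\tfrac{d}{dt}\hatx-\partial_t\Phi(\hatx,t)=\bar a(\hatx,\eps)$, the oscillation cancels and
\[
\dot\xi=\bar a(\hatx,\eps)-D\Phi(\hatx,t)\,\tfrac{d}{dt}\hatx .
\]
This is the continuous-time form of the second-order averaging step: the oscillatory term is absorbed into the coboundary $\Phi$ at the cost of the residual $D\Phi\cdot\tfrac{d}{dt}\hatx$, whose size $\le L_1\delta_{2,\eps}$ is precisely what $\delta_{2,\eps}$ measures. The hypothesis $\delta_\eps\le\tfrac12$ guarantees $|D\Phi|\le\delta_{2,\eps}\le\tfrac12$, so $x\mapsto x-\Phi(x,t)$ is a genuine invertible near-identity transformation and $\xi$ remains comparable to $\hatx$ inside $E$.

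Finally I would compare $\xi$ with the solution $X$ of~\eqref{eq-ODE}. Since $\xi(0)=x_0=X(0)$ and $|\hatx-\xi|=|\Phi(\hatx,t)|\le\delta_{1,\eps}$, it suffices to bound $|\xi-X|$. Writing $\bar a(\hatx,\eps)-\bar a(X)=[\bar a(\hatx,\eps)-\bar a(\hatx,0)]+[\bar a(\hatx,0)-\bar a(X,0)]$, the first bracket is $O(S_\eps)$ by the definition of $S_\eps$ together with~\eqref{eq-L3}, while the second is bounded by $L_1|\hatx-X|\le L_1(|\xi-X|+\delta_{1,\eps})$ since $\Lip\,\bar a\le L_1$. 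Substituting these into $\dot\xi-\dot X$ and applying Gronwall's inequality on $[0,1]$ yields a bound of the form $Ce^{2L}(\delta_\eps+S_\eps)$ for $|\xi-X|$; adding back $|\hatx-\xi|\le\delta_{1,\eps}$ gives $\z\le6e^{2L}(\delta_\eps+S_\eps)$.

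The main obstacle is the oscillatory integral $\int_0^t v_{\eps,\hatx(s)}(\phi_{s/\eps}^\eps y_0)\,ds$: because the slow variable $\hatx(s)$ itself varies, one cannot simply freeze $x$ and invoke the bound defining $\delta_{1,\eps}$. It is exactly the variation of $\hatx$ that, upon integration by parts, produces the derivative $Dv_{\eps,x}$ and hence the second-order quantity $\delta_{2,\eps}$, and it is in controlling this correction that condition~\eqref{eq-L2} on the Lipschitz regularity of $Da$ is used (to differentiate $\Phi$ in $x$ and keep $D\Phi$ bounded). Once this step is in place, the remaining continuous-time modifications—integrals in place of sums and the rescaling $s\mapsto s/\eps$—are routine, which is why the argument is entirely analogous to that of Theorem~\ref{thm-gen}.
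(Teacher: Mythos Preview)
Your proposal is correct and follows the same second-order averaging strategy as the paper's proof of Theorem~\ref{thm-gen} in Appendix~\ref{sec-second}: introduce the near-identity correction built from the ergodic integral (your $\Phi$ corresponds to the paper's $\delta_\eps u$), use the bound on $D\Phi$ coming from $\delta_{2,\eps}$ to control the residual, and finish with Gronwall. The continuous-time setting lets you streamline slightly---you evaluate the correction at $\hatx$ rather than at a lagged corrected point, you skip the discrete Euler intermediary $z_n$, and you fold the $X_\eps$-versus-$X$ comparison into the same Gronwall step---but these are exactly the routine modifications the paper has in mind when it says the proof is ``entirely analogous.''
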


As in the discrete time setting, we say that $\nu_0$ is statistically stable if
$\nu_\eps\to_w\nu_0$.  Proposition~\ref{prop-ss} goes through unchanged and statistical stability implies that $S_\eps\to0$.

If the measures $\nu_\eps$ are absolutely continuous with respect to a reference measure $m$ on $M$, we define the densities $\rho_\eps=d\nu_\eps/dm$ and set
$R_\eps=\int_M|\rho_\eps-\rho_0|\,dm$.   Then $\nu_0$ is strongly statistically stable if $R_\eps\to0$.  Proposition~\ref{prop-gen} and
Corollaries~\ref{cor-gen} and~\ref{cor-dm} go through unchanged from
the discrete time setting.

Fix a Borel subset $M'\subset M$ and a reference Borel measure
$m'$ on $M'$.
Let $h_\eps:M'\to\R^+$ be a family of Lipschitz functions such that $\phi^\eps_{h_\eps(y)}(y)\in M'$ for almost all $y\in M'$.  The map $T_\eps:M'\to M'$, $T_\eps(y)=\phi_{h_\eps(y)}^\eps(y)$, is then defined almost everywhere.  

As usual, we suppose that there is a family $\nu'_\eps$ of
ergodic $T_\eps$-invariant probability measures on $M'$.  
Define the suspension $M_{h_\eps}=\{(y,u)\in M'\times\R:0\le u\le h_\eps(y)\}/\sim$
	where $(y,h_\eps(y))\sim (T_\eps y,0)$.
	The suspension semiflow $f_t^\eps:M_{h_\eps}\to M_{h_\eps}$  is given by
$f_t^\eps(y,u)=(y,u+t)$ computed modulo identifications.
Let $\bar h_\eps=\int_{M'} h_\eps\,d\nu'_\eps$.
Then $\nu''_\eps=(\nu'_\eps\times{\rm Lebesgue})/\bar h_\eps$ is an ergodic absolutely continuous
$f_t^\eps$-invariant probability measure on $M_{h_\eps}$.
The projection $\pi_\eps:M_{h_\eps}\to M$ given by
$\pi_\eps(y,u)=\phi_u^\eps y$ is a semiconjugacy between $f_t^\eps$ and $\phi_t^\eps$.
Hence $\nu_\eps=\pi_{\eps\,*}\,\nu''_\eps$ is an ergodic $\phi_t^\eps$-invariant probability measure on $M$.

We suppose from now on that there are constants $K_2\ge K_1\ge1$ such that
for all $x,y\in M'$, $\eps\in[0,\eps_0)$,
\begin{itemize}
	\item $K_1^{-1}\le h_\eps\le K_1$, $\Lip\, h_\eps\le K_1$, $|h_\eps-h_0|_\infty \le K_1\eps$.
\item $d_M(\phi_t^\eps x,\phi_t^\eps y)\le K_2 d_M(x,y)$  and 
$d_M(\phi_t^\eps y,\phi_t^0 y)\le K_2 \eps$ for all $t\le K_1$.
\end{itemize}
(These assumptions are easily weakened; in particular changing the $\eps$ estimates to $\eps^{1/2}$ will not affect anything.)

\begin{prop} \label{prop-ssflow1}
Let $v:M\to\R^d$ be Lipschitz.
Define $\tilde v:M'\to\R^d$,
$\tilde v(y)=\int_0^{h_0(y)}v(\phi^0_u y)\,du$.  Then
\[
\int_M v(d\nu_\eps-d\nu_0)\le 3K_2^4\|v\|_\Lip \eps+
K_1^3|v|_\infty\Bigl|\int_{M'}h_0(d\nu_\eps'-d\nu_0')\Bigr|
+K_1\Bigl|\int_{M'}\tilde v(d\nu_\eps'-d\nu_0')\Bigr|.
\]
\end{prop}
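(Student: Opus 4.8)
The plan is to reduce everything to integrals over the base $M'$ via the suspension representation $\nu_\eps=\pi_{\eps\,*}\nu''_\eps$. Set $V_\eps(y)=\int_0^{h_\eps(y)}v(\phi_u^\eps y)\,du$, so that $V_0=\tilde v$. Since $\pi_\eps(y,u)=\phi_u^\eps y$ and $\nu''_\eps=(\nu'_\eps\times\Leb)/\bar h_\eps$, unwinding the definitions gives
\[
\int_M v\,d\nu_\eps=\int_{M_{h_\eps}}v\circ\pi_\eps\,d\nu''_\eps=\frac{1}{\bar h_\eps}\int_{M'}V_\eps\,d\nu'_\eps,
\]
and similarly for $\eps=0$. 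Thus the quantity to estimate is $\bar h_\eps^{-1}\int_{M'}V_\eps\,d\nu'_\eps-\bar h_0^{-1}\int_{M'}V_0\,d\nu'_0$, and the proof is essentially the standard comparison of two suspension measures, with the flow-closeness hypotheses supplying the $O(\eps)$ error.

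Next I would separate the variation of the numerator from that of the denominator by writing, with $I_\eps=\int_{M'}V_\eps\,d\nu'_\eps$,
\[
\frac{I_\eps}{\bar h_\eps}-\frac{I_0}{\bar h_0}=\frac{I_\eps-I_0}{\bar h_\eps}-\frac{I_0}{\bar h_0\bar h_\eps}(\bar h_\eps-\bar h_0).
\]
Each of $I_\eps-I_0$ and $\bar h_\eps-\bar h_0$ I would split into a change-of-integrand piece integrated against $\nu'_\eps$ and a change-of-measure piece: $I_\eps-I_0=\int_{M'}(V_\eps-V_0)\,d\nu'_\eps+\int_{M'}V_0(d\nu'_\eps-d\nu'_0)$, and likewise $\bar h_\eps-\bar h_0=\int_{M'}(h_\eps-h_0)\,d\nu'_\eps+\int_{M'}h_0(d\nu'_\eps-d\nu'_0)$. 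The two change-of-measure pieces are exactly the terms $\int_{M'}\tilde v(d\nu'_\eps-d\nu'_0)$ and $\int_{M'}h_0(d\nu'_\eps-d\nu'_0)$ appearing in the statement; their scalar prefactors are controlled using $\bar h_\eps,\bar h_0\ge K_1^{-1}$ together with $|I_0|\le K_1|v|_\infty$ (since $|V_0|\le h_0|v|_\infty\le K_1|v|_\infty$), which yields $\bar h_\eps^{-1}\le K_1$ and $|I_0|/(\bar h_0\bar h_\eps)\le K_1^3|v|_\infty$, matching the coefficients $K_1$ and $K_1^3|v|_\infty$ in the claim.

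It remains to bound the two change-of-integrand pieces, which is where the flow-closeness hypotheses enter and which I expect to be the only genuinely delicate step. For $\bar h_\eps-\bar h_0$ the integrand change is immediate from $|h_\eps-h_0|_\infty\le K_1\eps$. For $I_\eps-I_0$ I would estimate $V_\eps-V_0$ pointwise by splitting
\[
V_\eps(y)-V_0(y)=\int_0^{h_0(y)}\bigl(v(\phi_u^\eps y)-v(\phi_u^0 y)\bigr)\,du+\int_{h_0(y)}^{h_\eps(y)}v(\phi_u^\eps y)\,du.
\]
The first integral is bounded by $h_0(y)\,\Lip v\,\sup_{u\le K_1}d_M(\phi_u^\eps y,\phi_u^0 y)\le K_1K_2\,\Lip v\,\eps$, using $d_M(\phi_t^\eps y,\phi_t^0 y)\le K_2\eps$ for $t\le K_1$ and $h_0(y)\le K_1$; the second by $|h_\eps-h_0|_\infty|v|_\infty\le K_1|v|_\infty\eps$. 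Hence $|V_\eps-V_0|_\infty\le 2K_1K_2\|v\|_\Lip\eps$. Collecting the resulting $O(\eps)$ contributions, namely $2K_1^2K_2\|v\|_\Lip\eps$ from the numerator piece (after multiplying by $\bar h_\eps^{-1}\le K_1$) and $K_1^4|v|_\infty\eps$ from the denominator piece, and bounding crudely via $|v|_\infty\le\|v\|_\Lip$, $K_1\le K_2$ and $K_2\ge1$, gives the stated error $3K_2^4\|v\|_\Lip\eps$. The only real care needed is to keep the constants honest; the hypotheses are tailored precisely so that the pointwise control of $V_\eps-V_0$ costs a single factor of $K_2$, which then propagates into the final $K_2^4$.
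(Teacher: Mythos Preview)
Your argument is correct and is essentially the paper's proof, just with a slightly different telescoping order: the paper writes the difference as a four-term sum $I_1+I_2+I_3+I_4$ (changing, in turn, the normalising constant, the flow, the roof function, and the base measure), whereas you first separate numerator from denominator via the algebraic identity and then split each into change-of-integrand and change-of-measure pieces. Your pointwise bound on $V_\eps-V_0$ bundles the paper's $I_2$ and $I_3$ into one step, but the estimates and resulting constants are the same.
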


\begin{proof}
We have
\begin{align*}
	\int_M v\,(d\nu_\eps-d\nu_0) & =
	\int_{M_{h_\eps}} v\circ\pi_\eps\,d\nu''_\eps-\int_{M_{h_0}} v\circ\pi_0\,d\nu''_0  \\ & =
	(1/\bar h_\eps)\int_{M'}\int_0^{h_\eps} v\circ\pi_\eps\,du\,d\nu'_\eps
	-(1/\bar h_0)\int_{M'}\int_0^{h_0} v\circ\pi_0\,du\,d\nu'_0
	\\ & =I_1+I_2+I_3+I_4
\end{align*}
where
\begin{align*}
I_1 & = (1/\bar h_\eps-1/\bar h_0)
	\int_{M'}\int_0^{h_\eps} v\circ\pi_\eps\,du\,d\nu'_\eps,
\; 
I_2  = (1/\bar h_0)
\int_{M'}\int_0^{h_\eps} (v\circ\pi_\eps-v\circ\pi_0)\,du\,d\nu'_\eps,
\\
I_3 & = (1/\bar h_0)
	\Bigl(\int_{M'}\int_0^{h_\eps} v\circ\pi_0\,du\,d\nu'_\eps-
	\int_{M'}\int_0^{h_0} v\circ\pi_0\,du\,d\nu'_\eps\Bigr),
\\
I_4 & = (1/\bar h_0)
	\Bigl(\int_{M'}\int_0^{h_0} v\circ\pi_0\,du\,d\nu'_\eps-
	\int_{M'}\int_0^{h_0} v\circ\pi_0\,du\,d\nu'_0\Bigr).
\end{align*}
Now
\begin{align*}
|I_1| & \le K_1^2|\bar h_\eps-\bar h_0|K_1|v|_\infty
	\le K_1^3|v|_\infty \Bigl(K_1\eps+\Bigl|\int_{M'}h_0\,(d\nu_\eps'-d\nu_0')\Bigr|\Bigr), 
\\
|I_2| & \le K_1^2\sup_{y\in M'}\sup_{0\le u\le K_1}\Lip\, v\, d_M(\phi_u^\eps y,\phi_u^0 y)
\le K_1^2K_2\Lip\, v\,  \eps,
\\
|I_3| & \le K_1|v|_\infty|h_\eps-h_0|_\infty \le K_1^2|v|_\infty  \eps,
\qquad
|I_4|  \le K_1\Bigl|\int_{M'}\tilde v\,(d\nu_\eps'-d\nu_0')\Bigr|.
\end{align*}
The result follows from the combination of these estimates.
\end{proof}

\begin{cor} \label{cor-ssflow2}  Statistical stability of $\nu'_0$ implies
statistical stability of $\nu_0$.
\end{cor}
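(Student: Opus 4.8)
The plan is to verify weak convergence $\nu_\eps\to_w\nu_0$ by testing against bounded Lipschitz functions and then reading off each term in Proposition~\ref{prop-ssflow1}. Since the $\sigma$-algebra on $M$ is the Borel $\sigma$-algebra of a metric space and the $\nu_\eps$ are probability measures, it suffices to show that $\int_M v\,d\nu_\eps\to\int_M v\,d\nu_0$ for every bounded Lipschitz $v:M\to\R$; this is the standard fact that bounded Lipschitz functions are convergence-determining for weak convergence of probability measures on a metric space. In particular, I may restrict to scalar $v$, that is, take $d=1$ in Proposition~\ref{prop-ssflow1}.

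Fixing such a $v$, Proposition~\ref{prop-ssflow1} bounds $\int_M v\,(d\nu_\eps-d\nu_0)$ by a sum of three terms, and I would show that each tends to $0$ as $\eps\to0$. The first term, $3K_2^4\|v\|_\Lip\,\eps$, vanishes because of the explicit factor of $\eps$. For the second term, $h_0$ is Lipschitz and hence bounded and continuous, so statistical stability of $\nu'_0$ gives $\int_{M'}h_0\,d\nu'_\eps\to\int_{M'}h_0\,d\nu'_0$, whence $K_1^3|v|_\infty\bigl|\int_{M'}h_0\,(d\nu'_\eps-d\nu'_0)\bigr|\to0$.

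For the third term I would apply statistical stability of $\nu'_0$ to the function $\tilde v(y)=\int_0^{h_0(y)}v(\phi^0_u y)\,du$, and here is the one point that requires care: I must check that $\tilde v$ is a bounded continuous function on $M'$. Boundedness is immediate from $|\tilde v|\le K_1|v|_\infty$. For continuity (indeed Lipschitz continuity) I would split $\tilde v(y)-\tilde v(y')$ into the contribution from the differing integrands $v(\phi^0_u y)-v(\phi^0_u y')$, controlled by $\Lip\,v$ together with the bound $d_M(\phi^0_u y,\phi^0_u y')\le K_2 d_M(y,y')$ valid for $u\le h_0(y)\le K_1$, and the contribution from the differing upper limits of integration, controlled by $|v|_\infty$ and $\Lip\,h_0\le K_1$. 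Once $\tilde v$ is known to be bounded and continuous, statistical stability yields $\int_{M'}\tilde v\,d\nu'_\eps\to\int_{M'}\tilde v\,d\nu'_0$, so $K_1\bigl|\int_{M'}\tilde v\,(d\nu'_\eps-d\nu'_0)\bigr|\to0$.

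Combining the three limits shows $\int_M v\,(d\nu_\eps-d\nu_0)\to0$ for every bounded Lipschitz $v$, which is precisely statistical stability of $\nu_0$. I expect the only nonroutine step to be the verification that the induced observable $\tilde v$ inherits enough regularity to be admissible as a test function for $\nu'_0$; everything else follows immediately from Proposition~\ref{prop-ssflow1} and the definition of statistical stability.
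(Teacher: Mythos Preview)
Your proposal is correct and is exactly the approach the paper intends: the paper's own proof is the single sentence ``This follows from Proposition~\ref{prop-ssflow1},'' and you have simply filled in the details that are left to the reader---namely the reduction to bounded Lipschitz test functions and the verification that $\tilde v$ inherits enough regularity (boundedness and continuity) to serve as a test function for the weak convergence $\nu'_\eps\to_w\nu'_0$.
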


\begin{proof}  
This follows from Proposition~\ref{prop-ssflow1}.
\end{proof}

\begin{cor} \label{cor-ssflow}
	Suppose that the measures $\nu_\eps'$ are absolutely continuous with respect to $m'$, with densities $\rho_\eps'=d\nu_\eps'/dm'$.  Then
$\BIG S_\eps\le 3K_2^4L\Bigl(\eps+\int_{M'}|\rho_\eps'-\rho_0'|\,dm'\Bigr)$.
\end{cor}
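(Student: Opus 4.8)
The plan is to read the result off directly from Proposition~\ref{prop-ssflow1}, applied to the observable $y\mapsto a(x,y,0)$ for each fixed $x\in E$, and then to convert the two remaining integrals against $d\nu'_\eps-d\nu'_0$ into the single quantity $\int_{M'}|\rho'_\eps-\rho'_0|\,dm'$ using absolute continuity. First I would fix $x\in E$ and set $v(\cdot)=a(x,\cdot,0)$. By condition~\eqref{eq-L1} together with the convention for $\|\cdot\|_\Lip$ introduced in Section~\ref{sec-UE} (which controls Lipschitz regularity in the $M$ variable as well as in $\R^d$), we have $|v|_\infty+\Lip\,v\le|a|_\infty+\sup_\eps\Lip\,a(\cdot,\cdot,\eps)=\|a\|_\Lip\le L$, so $\|v\|_\Lip\le L$ and in particular $|v|_\infty\le L$. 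Applying Proposition~\ref{prop-ssflow1} both to $v$ and to $-v$ (so as to bound the absolute value, which is legitimate since the right-hand side depends on $v$ only through $\|v\|_\Lip$, $|v|_\infty$ and the moduli of the density integrals) gives
\[
\Bigl|\int_M a(x,y,0)\,(d\nu_\eps-d\nu_0)\Bigr|\le 3K_2^4L\eps+K_1^3|v|_\infty\Bigl|\int_{M'}h_0\,(d\nu'_\eps-d\nu'_0)\Bigr|+K_1\Bigl|\int_{M'}\tilde v\,(d\nu'_\eps-d\nu'_0)\Bigr|,
\]
where $\tilde v(y)=\int_0^{h_0(y)}v(\phi^0_u y)\,du$ as in Proposition~\ref{prop-ssflow1}.

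Next I would use the hypothesis $\nu'_\eps\ll m'$. Writing $R'_\eps=\int_{M'}|\rho'_\eps-\rho'_0|\,dm'$, for any bounded measurable $\phi:M'\to\R$ one has
\[
\Bigl|\int_{M'}\phi\,(d\nu'_\eps-d\nu'_0)\Bigr|=\Bigl|\int_{M'}\phi\,(\rho'_\eps-\rho'_0)\,dm'\Bigr|\le|\phi|_\infty R'_\eps.
\]
I apply this with $\phi=h_0$, using $|h_0|_\infty\le K_1$, and with $\phi=\tilde v$, noting $|\tilde v|_\infty\le|h_0|_\infty|v|_\infty\le K_1L$. The second and third terms of the previous display are thereby bounded by $K_1^4LR'_\eps$ and $K_1^2LR'_\eps$ respectively, so both collapse into a multiple of $R'_\eps$.

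Finally I would take the supremum over $x\in E$ and add the separate $\eps$ coming from the definition of $S_\eps$. Collecting the density terms gives at most $(K_1^4+K_1^2)LR'_\eps\le 2K_2^4LR'_\eps\le 3K_2^4LR'_\eps$, so the claimed coefficient on $R'_\eps$ is comfortable. There is no genuine obstacle here—the corollary is essentially a direct substitution into Proposition~\ref{prop-ssflow1}—and the only point needing care is the $\eps$-bookkeeping. To land exactly on $3K_2^4L$ one should keep the coefficient of $\eps$ produced by Proposition~\ref{prop-ssflow1} at $2K_2^4L$ rather than $3K_2^4L$; this sharper value comes from bounding the $\eps$-contributions of $I_1,I_2,I_3$ in the proof of Proposition~\ref{prop-ssflow1} through the \emph{combined} norm $|v|_\infty+\Lip\,v\le L$ (giving $(K_2^4+K_2^2)L\le 2K_2^4L$). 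Then the extra $\eps$ in the definition of $S_\eps$, which is at most $K_2^4L\eps$ since $K_2^4L\ge1$, is absorbed to leave the clean coefficient $3K_2^4L\eps$, and combining yields $S_\eps\le 3K_2^4L\bigl(\eps+\int_{M'}|\rho'_\eps-\rho'_0|\,dm'\bigr)$ as claimed.
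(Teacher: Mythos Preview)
Your approach is correct and is exactly the paper's approach: the paper's proof consists of the single line ``This follows from Proposition~\ref{prop-ssflow1} with $v(y)=a(x,y,0)$ for each fixed $x$,'' and you have simply filled in the details. Your observation that the stated constant $3K_2^4$ in Proposition~\ref{prop-ssflow1} is slightly too coarse to land on the claimed $3K_2^4L$ in the corollary (once the extra $\eps$ in the definition of $S_\eps$ is added), and that one must instead reuse the sharper bound $(K_2^4+K_2^2)L\le 2K_2^4L$ from the \emph{proof} of Proposition~\ref{prop-ssflow1} via $|v|_\infty+\Lip\,v\le L$, is a valid and careful point that the paper silently glosses over.
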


\begin{proof}
This follows from Proposition~\ref{prop-ssflow1} with $v(y)=a(x,y,0)$ for each fixed $x$.
\end{proof}

Next we show how the order function for the flows $\phi_t^\eps:M\to M$ is related to the order function for the maps $T_\eps:M'\to M'$.
We restrict attention to $\delta_{1,\eps}$ since the corresponding statement for $\delta_{2,\eps}$ is identical.

Define the family of induced observables $w_{x,\eps}:M'\to\R$,
\[
	w_{\eps,x}(y)=\int_0^{h_\eps(y)}v_{\eps,x}(\phi_u^\eps y)\,du.
\]
Note that $\int_{M'}w_{\eps,x}\,d\nu'_\eps=0$ and
$w_{\eps,x}$ is $d_M$-Lipschitz with $\|w_\eps\|_\Lip\le 2K_1K_2L$.
Let
\[
\Delta_{1,\eps}=\sup_{x\in E}\sup_{1\le n\le 1+K_1/\eps} \eps|w_{\eps,x,n}|
\qquad\text{where}\qquad
w_{\eps,x,n}=\sum_{j=0}^{n-1}w_{\eps,x}\circ T_\eps^j.
\]

We can now state our main result for this section.

\begin{lemma} \label{lem-vw}
	Let $q\ge1$.  Then 
	$|\delta_{1,\eps}|_{L^q(\nu_\eps)} \le (|\Delta_{1,\eps}|_{L^q(\nu'_\eps)}+4K_1L\eps)$.
\end{lemma}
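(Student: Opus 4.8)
The plan is to relate the continuous-time order function $\delta_{1,\eps}$ to its discrete-time counterpart $\Delta_{1,\eps}$ by decomposing the integral $v_{\eps,x,t}=\int_0^t v_{\eps,x}\circ\phi_s^\eps\,ds$ along the suspension structure. The key observation is that $\nu_\eps$ is the pushforward $\pi_{\eps\,*}\,\nu''_\eps$ of the suspension measure, so $L^q(\nu_\eps)$ norms pull back to $L^q(\nu''_\eps)$ norms over $M_{h_\eps}$. Working on the suspension, a point is $(y,u)$ with $y\in M'$ and $0\le u\le h_\eps(y)$, and the flow orbit up to time $t$ visits the base $M'$ at the successive return times $\tau_{\eps,n}(y)=\sum_{j=0}^{n-1}h_\eps(T_\eps^j y)$. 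The plan is to write the time integral $\int_0^t v_{\eps,x}\circ\phi_s^\eps(\pi_\eps(y,u))\,ds$ as a sum of full excursions between returns — each excursion contributing exactly one term $w_{\eps,x}(T_\eps^j y)$ by the definition $w_{\eps,x}(y)=\int_0^{h_\eps(y)}v_{\eps,x}(\phi_u^\eps y)\,du$ — plus two boundary terms coming from the initial partial excursion (from height $u$) and the final partial excursion (cut off at time $t$).

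The main steps, in order, are as follows. First I would fix $(y,u)\in M_{h_\eps}$ and $t\le 1/\eps$, and let $n$ be the number of complete returns to $M'$ during time $t$; since $h_\eps\ge K_1^{-1}$, we have $n\le 1+K_1 t\le 1+K_1/\eps$, which is exactly the range appearing in the definition of $\Delta_{1,\eps}$. Second, I would show that the bulk sum over complete excursions equals $w_{\eps,x,n}(y)$ up to reindexing, so its contribution is bounded by $\eps^{-1}\Delta_{1,\eps}$ after multiplying by $\eps$. Third, I would bound the two partial-excursion boundary terms: each is an integral of $v_{\eps,x}$ over a time interval of length at most $h_\eps\le K_1$, and since $|v_{\eps,x}|_\infty\le 2L$ by condition~\eqref{eq-L1}, each contributes at most $2K_1 L$, so together they contribute at most $4K_1 L$ to $|v_{\eps,x,t}|$, i.e. $4K_1 L\eps$ after multiplying by $\eps$. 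Taking the supremum over $x\in E$ and $t\in[0,1/\eps]$, and then passing to $L^q(\nu''_\eps)$ norms using the triangle inequality, yields $|\delta_{1,\eps}|_{L^q(\nu_\eps)}\le |\Delta_{1,\eps}|_{L^q(\nu'_\eps)}+4K_1 L\eps$, where the base-measure identification $\int_{M_{h_\eps}}(\cdot)\,d\nu''_\eps$ reduces to $\int_{M'}(\cdot)\,d\nu'_\eps$ for the excursion-sum term because $w_{\eps,x,n}$ depends only on the base coordinate $y$.

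The hard part will be bookkeeping the two boundary contributions cleanly and making sure the discrete sum $w_{\eps,x,n}$ is indexed so that it genuinely matches the definition of $\Delta_{1,\eps}$ rather than a shifted variant. In particular, the initial partial excursion starts at height $u$ rather than $0$, so the first return occurs after elapsed time $h_\eps(y)-u$, not $h_\eps(y)$; this offset must be absorbed into one of the boundary terms rather than into the sum, and one must check that the resulting index $n$ still lies in the range $[1,1+K_1/\eps]$. Once this combinatorics is pinned down, the estimate is a direct application of the uniform bounds $K_1^{-1}\le h_\eps\le K_1$ together with $|v_{\eps,x}|_\infty\le 2L$, so no analytic difficulty remains — the only genuine care is in the discrete/continuous dictionary and in verifying that the $L^q$ norm over the suspension collapses correctly onto $L^q(\nu'_\eps)$ for the dominant term.
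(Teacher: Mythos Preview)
Your proposal is correct and follows essentially the same route as the paper. The paper packages your ``two boundary terms'' as a single function $H_{\eps,x}(y,u)=\int_0^u v_{\eps,x}(\phi^\eps_{u'}y)\,du'$ together with its shift $H_{\eps,x}\circ f_t^\eps$, yielding the clean identity $\hat v_{\eps,x,t}(y,u)=w_{\eps,x,N_{\eps,t}(y,u)}(y)+H_{\eps,x}\circ f_t^\eps(y,u)-H_{\eps,x}(y,u)$ with the lap-counting function $N_{\eps,t}$; your bulk/boundary decomposition, the bound $|H_{\eps,x}|_\infty\le 2K_1L$, and the lap bound $N_{\eps,t}\le 1+K_1t$ are exactly the ingredients used there, and the passage to $L^q$ norms via the pushforward $\nu_\eps=\pi_{\eps\,*}\nu''_\eps$ is likewise the same.
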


\begin{proof}
	Let $\hat v_{\eps,x}=v_{\eps,x}\circ \pi_\eps$ and
	define $\hat v_{\eps,x,t}=\int_0^t\hat v_{\eps,x}\circ f_u^\eps\,du$.
Let $N_{\eps,t}:M_{h_\eps}\to\{0,1,\dots,1+[K_1t]\}$ be the number of laps by time $t$,
\[
N_{\eps,t}(y,u)=\#\{s\in(0,t]:f_s^\eps(y,u)\in M'\times\{0\}\}.
\]
Then
\[
\hat v_{\eps,x,t}(y,u)=w_{\eps,x,N_{\eps,t}(y,u)}(y)+H_{\eps,x}\circ f_t^\eps(y,u)-H_{\eps,x}(y,u),
\]
where $H_{\eps,x}(y,u)=\int_0^u \hat v_{\eps,x}(y,u')\,du'=\hat v_{\eps,x,u}(y,0)$.
Note that $|H_{\eps,x}|_\infty\le 2K_1L$.
Hence
  \begin{align*}
	  \sup_{s\le t}|v_{\eps,x,s}|\circ\pi_\eps(y,u) & =
	  \sup_{s\le t}|\hat v_{\eps,x,s}(y,u)|
	  \le \sup_{s\le t}|w_{\eps,x,N_s(y,u)}(y)|+4K_1L
	  \\ & \le \max_{j\le 1+K_1t}|w_{\eps,x,j}(y)|+4K_1L.
  \end{align*}
It follows that
\[
\eps \sup_{s\le 1/\eps}|v_{\eps,x,s}|\circ\pi_\eps(y,u)\le\Delta_{1,\eps}(y)+4 K_1L\eps,
\]
  and so 
  $\delta_{1,\eps}\circ\pi_\eps(y,u)\le \Delta_{1,\eps}(y)+4K_1L\eps$.
  The result follows.
\end{proof}

As a consequence of Proposition~\ref{cor-ssflow} and Lemma~\ref{lem-vw}, our results for maps go through immediately for semiflows.  For example, suppose that the maps $T_\eps(y)=\phi_{h_\eps(y)}^\eps(y)$ are a family of quadratic maps as in Example~\ref{eg-logistic}.  Then for any $q>0$, we obtain averaging in $L^q(\nu_\eps)$ 
with rate $O(\eps^{\frac12-})$.
If moreover, $\nu_0$ is strongly statistically stable, then we obtain averaging
in $L^1(\nu_0)$ with rate $O(\eps^{\frac12-})$, and averaging
in $L^1(\mu)$ for any absolutely continuous probability measure $\mu$ on $M$.

\section{Counterexample for almost sure convergence}
\label{sec-ae}

It is known~\cite{BakhtinKifer08} that almost sure convergence fails for fully-coupled fast-slow systems.  Here we give an example to show that almost sure convergence fails also in the simpler context of families of skew products as considered in this paper.

Fix $\beta>0$.  We consider the family of maps $T_\eps:[0,1]\to[0,1]$ given by
$T_\eps y=2y+\eps^\beta \bmod1$ with invariant measure $\nu_\eps$ taken to be
Lebesgue for all $\eps\ge0$.
Let $a(x,y,\eps)=\cos 2\pi y$.  Since $a$ has mean zero, the averaged ODE
is given by $\dot X=0$.  We take $x_0=0$ so that
$X(t)\equiv0$.  Nevertheless, we prove:
\begin{prop} \label{prop-ae}
For every $y_0\in[0,1]$, 
  $\limsup_{\eps \to 0} \hatx(1)=1$.
\end{prop}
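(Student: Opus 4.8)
The plan is to dispatch the easy upper bound first and then construct an explicit sequence $\eps_m\to0$ realizing the value $1$. Since $\x_0=0$ and $a(x,y,\eps)=\cos 2\pi y$, the slow variable is $\hatx(1)=\x_{[1/\eps]}=\eps\sum_{n=0}^{[1/\eps]-1}\cos 2\pi\y_n$. As $\cos\le1$, this is bounded above by $\eps[1/\eps]$, which tends to $1$ as $\eps\to0$; hence $\limsup_{\eps\to0}\hatx(1)\le1$ for free, and it remains to produce $\eps_m\to0$ along which $\hatx(1)\to1$.

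The key observation is that $\y_n=T_\eps^n y_0$ solves the affine recursion explicitly: $\y_n=2^n y_0+(2^n-1)\eps^\beta\bmod1=2^n(y_0+\eps^\beta)-\eps^\beta\bmod1$. The map $T_\eps$ has a (repelling) fixed point at $y^*=1-\eps^\beta\equiv-\eps^\beta\bmod1$, at which $\cos 2\pi y^*=\cos 2\pi\eps^\beta$ is close to $1$. The idea is to choose $\eps$ so that the orbit lands \emph{exactly} on $y^*$ after finitely many steps and then stays there forever: this happens at time $m$ precisely when $2^m(y_0+\eps^\beta)\in\Z$, i.e.\ when $y_0+\eps^\beta$ is a dyadic rational with denominator dividing $2^m$.

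Accordingly, for each large integer $m$ I would let $k_m$ be the least integer with $k_m>2^m y_0$ and set $\eps_m=(k_m 2^{-m}-y_0)^{1/\beta}$, so that $y_0+\eps_m^\beta=k_m 2^{-m}$ is dyadic and $\eps_m^\beta\in(0,2^{-m}]$; thus $\eps_m\le 2^{-m/\beta}\to0$. For this choice $\y_n\equiv-\eps_m^\beta\equiv1-\eps_m^\beta\bmod1$ for all $n\ge m$, so $\cos 2\pi\y_n=\cos 2\pi\eps_m^\beta$ for $n\ge m$. Splitting the sum at $n=m$, the transient $\eps_m\sum_{n=0}^{m-1}\cos 2\pi\y_n$ is bounded by $\eps_m m\le 2^{-m/\beta}m\to0$, while the main part equals $\eps_m([1/\eps_m]-m)\cos 2\pi\eps_m^\beta\to1$, since $\eps_m[1/\eps_m]\to1$, $\eps_m m\to0$ and $\cos 2\pi\eps_m^\beta\to1$. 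Hence $\hatx(1)\to1$ along $\eps_m$, which together with the upper bound gives $\limsup_{\eps\to0}\hatx(1)=1$.

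The main conceptual obstacle is spotting the right target. A naive attempt to keep $\y_n$ near $0$ by a small perturbation of $y_0$ fails, because the doubling map spreads a generic orbit, and the correction $\eps^\beta$ only controls the first $O(\log(1/\eps))$ binary digits of $y_0+\eps^\beta$, a vanishing fraction of the $[1/\eps]$ summation steps. The resolution is to exploit that, although $y^*$ is repelling, an orbit landing exactly on it remains there; tuning $\eps$ so that $y_0+\eps^\beta$ is dyadic forces this exact landing after only $m=o(1/\eps)$ steps, rendering the transient asymptotically negligible. Everything else is the elementary splitting and the three limits above.
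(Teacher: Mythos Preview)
Your proof is correct and follows essentially the same approach as the paper: both arguments choose $\eps$ so that $y_0+\eps^\beta$ is a dyadic rational $k\,2^{-m}$, forcing the orbit to land exactly on the fixed point $-\eps^\beta\bmod1$ after $m$ steps, and then estimate the transient and main parts separately. The only cosmetic differences are that the paper parameterises by an auxiliary $\delta>0$ with $N=[\delta^{-1/2}]$ and rounds to the dyadic below $y_0+\delta^\beta$, whereas you parameterise directly by the landing time $m$ and round up; your bookkeeping (in particular the bound $\eps_m m\le m\,2^{-m/\beta}\to0$ on the transient) is arguably tidier.
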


\begin{proof}
Let $y_0\in [0,1]$ and 
$\delta>0$.
Let $N= {[\delta^{-1/2}]}$ and choose an
integer $1 \leq k \leq 2^N$ such that
\[
  y_0 \in [ -\delta^\beta + (k-1)2^{-N}, \,\, 
   -\delta^\beta + k 2^{-N}  ].
\]
Choose $\eps$ such that
\[
  y_0 = -\eps^\beta + (k-1) 2^{-N}.
\]
Then
\[
  \delta^\beta - 2^{-N} \leq \eps^\beta \leq \delta^\beta.
\]
If $\delta$ is small enough, then
$\delta^\beta - 2^{-N} = \delta^\beta - 2^{-{[\delta^{-1/2}]}}>0$,
and so $0<\eps \leq \delta$.

Now
\[
\y_n=2^ny_0+(2^n-1)\eps^\beta \bmod1
=-\eps^\beta +(k-1)2^{n-N} \bmod1,
\]
for all $n\ge0$.
In particular, for $n\ge N$ we have
$\y_n = -\eps^\beta\bmod1$, and $\cos 2\pi \y_n \geq 1-\pi\eps^\beta$.
Note that $N \leq \eps^{-1/2}$.
Hence
$\hatx(1)
 =  \eps \sum_{n=0}^{[\eps^{-1}]-1} \cos(2\pi \y_n)
 = 1+O(\eps^{1/2})+O(\eps^\beta)$.
 Since $\eps\in(0,\delta]$ is arbitrarily small, the result follows.
\end{proof}

\appendix

\section{Proof of second order averaging}
\label{sec-second}

In this appendix, we prove Theorem~\ref{thm-gen}.
This is a quantitative version of a result due to~\cite{Sanders83}
with a somewhat simplified proof.
We work with discrete time rather than continuous time.

First, we consider the case where $T:M\to M$ is independent of $\eps$.  Suppose that
$a:\R^d\times M\to\R^d$ and $\bar a:\R^d\to\R^d$ are functions.
Assume that $\|a\|_\Lip\le L$ and $\|Da\|_\Lip\le L$ where
$D=\frac{d}{dx}$ and $L\ge1$.

For $\eps>0$, consider the discrete fast-slow system
\begin{align*}
    x_{n+1} & = x_n + \eps a(x_n,y_n), \quad
    y_{n+1}  = Ty_n
\end{align*}
with $x_0\in\R^d$ and $y_0\in M$ given.

Define $\hat x_\eps:[0,1]\to\R^d$,  $\hat x_\eps(t) = x_{[t/\eps]}$, and let $X:[0,1]\to\R^d$ be
the solution of the ODE
    $\dot X = \bar a(X)$ with initial condition $X(0) = x_0$.
    As in Section~\ref{sec-gen}, we define
$\delta_\eps=\delta_{1,\eps}+\delta_{2,\eps}$ where
\begin{align*}
	\delta_{1,\eps}(y_0) & =\sup_{x}\sup_{1\leq n\leq 1/\epsilon}\eps\Bigl|\sum_{j=0}^{n-1}(a(x,y_j)-\bar{a}(x))\Bigr|, \\
	\delta_{2,\eps}(y_0) & =\sup_{x}\sup_{1\leq n\leq 1/\epsilon}\epsilon\Bigl|\sum_{j=0}^{n-1}(Da(x,y_j)-D\bar{a}(x))\Bigr|.
\end{align*}

\begin{thm} \label{thm-second} 
Let $\eps>0$, $x_0\in\R^d$.  For all $y_0\in M$ with $\delta_\eps(y_0)\le\frac12$ and $t\in[0,1]$,
\[
	|\hat{x}_{\eps}(t)-X(t)|\leq 5e^{2L}(\delta_\eps(y_0)+\eps).
\]
\end{thm}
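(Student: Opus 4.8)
The plan is to prove Theorem~\ref{thm-second} by a near-identity (``averaging'') change of variables that cancels the mean-zero oscillation of $a$ to first order, followed by a discrete Gronwall comparison of the transformed slow variable with the sampled ODE solution. The two ingredients $\delta_{1,\eps}$ and $\delta_{2,\eps}$ enter as a priori bounds on the size and the $x$-derivative of the transformation.

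First I would set $\phi_n(x)=\sum_{j=0}^{n-1}\bigl(a(x,y_j)-\bar a(x)\bigr)$, so that $\phi_0\equiv0$, $\phi_{n+1}(x)-\phi_n(x)=a(x,y_n)-\bar a(x)$, and $D\phi_n(x)=\sum_{j=0}^{n-1}\bigl(Da(x,y_j)-D\bar a(x)\bigr)$. The defining suprema give the uniform bounds $\eps|\phi_n(x)|\le\delta_{1,\eps}$ and $\eps|D\phi_n(x)|\le\delta_{2,\eps}$ for all $1\le n\le1/\eps$. I would then define the transformed variable $z_n=x_n-\eps\phi_n(x_n)$; since $\phi_0\equiv0$ we have $z_0=x_0$, and $|x_n-z_n|=\eps|\phi_n(x_n)|\le\delta_{1,\eps}$, so $z_n$ already tracks $x_n$ to within $\delta_{1,\eps}$.

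The heart of the argument is the one-step increment of $z_n$. Writing $x_{n+1}-x_n=\eps\bar a(x_n)+\eps\bigl(a(x_n,y_n)-\bar a(x_n)\bigr)$ and using the telescoping identity $\phi_{n+1}(x_n)-\phi_n(x_n)=a(x_n,y_n)-\bar a(x_n)$, the oscillatory term cancels exactly and one is left with
\[
z_{n+1}-z_n=\eps\bar a(z_n)+e_n,\qquad e_n=\eps\bigl(\bar a(x_n)-\bar a(z_n)\bigr)-\eps\bigl(\phi_{n+1}(x_{n+1})-\phi_{n+1}(x_n)\bigr).
\]
The first part of $e_n$ is controlled by $\Lip\,\bar a\le L$ and $|x_n-z_n|\le\delta_{1,\eps}$, contributing $\eps L\delta_{1,\eps}$ per step. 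The decisive step is the second part: rather than Taylor-expanding $\phi_{n+1}$ to second order (which would introduce an uncontrolled partial sum of $D^2a$ of size $\sim n$ and ruin the estimate), I would apply the mean value inequality $|\phi_{n+1}(x_{n+1})-\phi_{n+1}(x_n)|\le\sup|D\phi_{n+1}|\,|x_{n+1}-x_n|\le(\delta_{2,\eps}/\eps)(\eps L)$, the required differentiability being supplied by the regularity of $a$ in $x$. This uses precisely the second-order quantity $\delta_{2,\eps}$, and I expect it to be the main obstacle: the naive Lipschitz constant of $\phi_{n+1}$ grows like $n$, so only the cancellation recorded in $\delta_{2,\eps}$ keeps the term summable. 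Summing over the at most $1/\eps$ steps then gives $\sum_k|e_k|\le L\delta_{1,\eps}+L\delta_{2,\eps}=L\delta_\eps$.

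To finish, I would sample the ODE at $X_n=X(n\eps)$ and note $X_{n+1}-X_n=\eps\bar a(X_n)+r_n$ with $\sum_k|r_k|\le L^2\eps$, using $|\bar a|_\infty\le L$ and $\Lip\,\bar a\le L$. Subtracting the two recursions and setting $d_n=|z_n-X_n|$ gives $d_{n+1}\le(1+\eps L)d_n+|e_n|+|r_n|$ with $d_0=0$, so discrete Gronwall with $(1+\eps L)^{1/\eps}\le e^L$ yields $d_n\le e^L(L\delta_\eps+L^2\eps)$. Combining this with $|x_n-z_n|\le\delta_{1,\eps}$ and the time-discretization bound $|X(t)-X([t/\eps]\eps)|\le L\eps$, and absorbing constants via $L\le e^L$ and $L^2\le e^L$, gives $|\hat x_\eps(t)-X(t)|\le 5e^{2L}(\delta_\eps+\eps)$. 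The hypothesis $\delta_\eps\le\frac12$ serves to keep the near-identity map genuinely close to the identity so that all the supremum bounds apply on the relevant region, which is what matters when passing to the domain-restricted family version Theorem~\ref{thm-gen}.
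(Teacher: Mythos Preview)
Your proof is correct and follows the same overall strategy as the paper: a near-identity correction built from the partial sums $\phi_n(x)=\sum_{j<n}(a(x,y_j)-\bar a(x))$, a per-step error bound, and a discrete Gronwall comparison with the sampled ODE. The one substantive difference is where the correction is evaluated. The paper sets $w_n=x_n-\delta_\eps u(w_{n-1},n)$ with $u=(\eps/\delta_\eps)\phi_n$, i.e.\ it evaluates the correction at the \emph{previous} transformed point $w_{n-1}$; this forces an inductive argument (the ``claim'' inside Lemma~\ref{lem-w}) to propagate the increment bound $|w_{n+1}-w_n-\eps\bar a(w_n)|\le 4L\eps\delta_\eps$, and the induction step is where the hypothesis $\delta_\eps\le\tfrac12$ is actually used. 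You instead set $z_n=x_n-\eps\phi_n(x_n)$, evaluating at the current $x_n$; the telescoping $\phi_{n+1}(x_n)-\phi_n(x_n)=a(x_n,y_n)-\bar a(x_n)$ then cancels the oscillation exactly, and the residual $\eps(\phi_{n+1}(x_{n+1})-\phi_{n+1}(x_n))$ is bounded directly by the mean value inequality and $\delta_{2,\eps}$, with no induction. Your variant is a bit cleaner and, as you implicitly observe, does not use $\delta_\eps\le\tfrac12$ at all in this unrestricted-$x$ formulation; the constraint only becomes relevant when the suprema defining $\delta_\eps$ are restricted to $x\in E$ as in Section~\ref{sec-gen}.
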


First we recall a discrete version of Gronwall's lemma.

\begin{prop} \label{prop-discreteG}
Suppose that $b_n\ge0$ and that there exist constants $C,D\ge0$ such that
$b_n\le C+D\sum_{m=0}^{n-1}b_m$ for all $n\ge0$.
Then $b_n\le C(D+1)^n$.
\end{prop}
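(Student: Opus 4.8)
The plan is to prove the bound by induction on $n$, and the cleanest route is to introduce the auxiliary sequence of partial right-hand sides rather than to induct on $b_n$ directly. I would set $a_n = C + D\sum_{m=0}^{n-1}b_m$, so that the hypothesis reads simply $b_n \le a_n$ for every $n \ge 0$, with $a_0 = C$ (the empty sum being zero). This reformulation replaces an inequality involving a cumulative sum by a statement about a single monotone sequence, which is easier to iterate.

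The key step is then a one-step recursion for $a_n$. Since $a_{n+1} = a_n + D b_n$ and $b_n \le a_n$ with $D \ge 0$, I get $a_{n+1} \le a_n + D a_n = (1+D)a_n$. Iterating this from $a_0 = C$ yields $a_n \le C(1+D)^n$, and hence $b_n \le a_n \le C(D+1)^n$, which is exactly the claim. As a variant one could induct on $b_n$ directly: assuming $b_m \le C(D+1)^m$ for all $m < n$ and summing the geometric series $\sum_{m=0}^{n-1} C(D+1)^m = C\frac{(D+1)^n - 1}{D}$ gives $b_n \le C + D\cdot C\frac{(D+1)^n-1}{D} = C(D+1)^n$; but this requires treating $D=0$ separately, so I prefer the auxiliary-sequence argument.

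There is really no substantive obstacle here — the statement is a standard discrete Gronwall inequality — so the only thing to watch is the bookkeeping at the boundary. The base case relies on reading the empty sum at $n=0$ as zero, which gives $b_0 \le C = C(D+1)^0$; and in the direct-induction variant the geometric-sum manipulation breaks when $D=0$, a case in which the hypothesis already forces $b_n \le C = C(D+1)^n$. With the formulation via $a_n$ neither of these issues surfaces explicitly, since the recursion $a_{n+1} \le (1+D)a_n$ and the initialization $a_0 = C$ are valid regardless of whether $D$ is zero or positive.
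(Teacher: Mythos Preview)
Your proof is correct and follows the same approach as the paper, which simply states that the result follows by induction; you have merely supplied the details (via the auxiliary sequence $a_n$) that the paper omits.
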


\begin{proof} This follows by induction.
\end{proof}

Define a function $u:\mathbb{R}^{d}\times\{ 0,2,\ldots,1/\eps\} $
by setting $u(x,0)\equiv0$ and
\[
u(x,n)=\frac{\eps}{\delta_\eps}\sum_{j=0}^{n-1}(a(x,y_j)-\bar{a}(x)), \quad n\ge1.
\]

\begin{prop}
\label{prop-u}For any $n\le 1/\eps$, we have $|u(\cdot, n)|_\infty\le1$ and
$\Lip\,u(\cdot,n)\leq1$. \end{prop}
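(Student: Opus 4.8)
First I would dispose of the sup-norm bound, which is immediate from the definition of $\delta_{1,\eps}$. For $n\ge1$ the inner sum satisfies $\eps\bigl|\sum_{j=0}^{n-1}(a(x,y_j)-\bar a(x))\bigr|\le\delta_{1,\eps}$ by the very definition of $\delta_{1,\eps}$ (as a supremum over $x$ and over $1\le n\le1/\eps$), so
\[
|u(x,n)|=\frac{\eps}{\delta_\eps}\Bigl|\sum_{j=0}^{n-1}(a(x,y_j)-\bar a(x))\Bigr|\le\frac{\delta_{1,\eps}}{\delta_\eps}\le1,
\]
using $\delta_{1,\eps}\le\delta_{1,\eps}+\delta_{2,\eps}=\delta_\eps$. (I assume $\delta_\eps>0$; when $\delta_\eps=0$ every such sum vanishes and one simply takes $u\equiv0$.) The case $n=0$ is trivial since $u(\cdot,0)\equiv0$.

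For the Lipschitz estimate the plan is to express the $x$-increment of $u$ through the \emph{summed derivatives}, since these are exactly what $\delta_{2,\eps}$ controls. Writing $x_s=x'+s(x-x')$ and applying the fundamental theorem of calculus to the $C^1$ map $x\mapsto a(x,y_j)-\bar a(x)$, I would obtain
\[
(a(x,y_j)-\bar a(x))-(a(x',y_j)-\bar a(x'))=\int_0^1 \bigl(Da(x_s,y_j)-D\bar a(x_s)\bigr)(x-x')\,ds.
\]
Summing over $0\le j\le n-1$ and multiplying by $\eps/\delta_\eps$ gives
\[
u(x,n)-u(x',n)=\frac{\eps}{\delta_\eps}\int_0^1 \Bigl(\sum_{j=0}^{n-1}\bigl(Da(x_s,y_j)-D\bar a(x_s)\bigr)\Bigr)(x-x')\,ds.
\]
For each fixed $s$ the bracketed sum satisfies $\eps\bigl|\sum_{j=0}^{n-1}(Da(x_s,y_j)-D\bar a(x_s))\bigr|\le\delta_{2,\eps}$ by the definition of $\delta_{2,\eps}$ (here we use $n\le1/\eps$), so the integrand is bounded by $(\delta_{2,\eps}/\delta_\eps)|x-x'|$ uniformly in $s$. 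Hence $|u(x,n)-u(x',n)|\le(\delta_{2,\eps}/\delta_\eps)|x-x'|\le|x-x'|$, using $\delta_{2,\eps}\le\delta_\eps$, which is the claimed bound $\Lip\,u(\cdot,n)\le1$.

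There is no genuine obstacle here; the one point to get right is to route the $x$-dependence through $\delta_{2,\eps}$ via the integral representation rather than bounding the Lipschitz constant of each summand by $L$ (which would only give the useless estimate $\Lip\,u(\cdot,n)\le\eps n L/\delta_\eps$). The hypotheses $\|Da\|_\Lip\le L$ and the differentiability of $\bar a$ guarantee that $Da$ and $D\bar a$ are well defined and that the chain rule applies, while the two normalisations $\delta_{1,\eps}/\delta_\eps\le1$ and $\delta_{2,\eps}/\delta_\eps\le1$ are precisely the reason $\delta_\eps$ was defined as the sum $\delta_{1,\eps}+\delta_{2,\eps}$: first-order averaging controls $|u|_\infty$ and second-order averaging controls $\Lip\,u$.
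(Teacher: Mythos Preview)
Your proof is correct and follows essentially the same approach as the paper. The paper is slightly more compact: it directly computes $Du(x,n)=\frac{\eps}{\delta_\eps}\sum_{j=0}^{n-1}(Da(x,y_j)-D\bar a(x))$, bounds $|Du(x,n)|\le\delta_{2,\eps}/\delta_\eps\le1$, and then invokes the mean value theorem, whereas you spell out the integral representation explicitly; but this is the same argument.
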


\begin{proof}
For all $x$,
\[
|Du(x,n)|=\frac{\eps}{\delta_\eps}\Bigl|\sum_{j=0}^{n-1}(Da(x,y_j)-D\bar{a}(x))\Bigr|\leq\frac{\delta_{2,\eps}}{\delta_\eps}\leq1.
\]
Hence the second estimate follows from the mean value theorem, and the first estimate is easier.
\end{proof}

Define a new sequence $w_n$ by setting $w_0=x_0$ and 
\[
w_n=x_n-\delta_\eps u(w_{n-1},n), \quad n\ge1.
\]
 
\begin{lemma} \label{lem-w}
For all $0\leq n\leq 1/\eps$ 
and for all $y\in M$ with $\delta_\eps(y)\le\frac12$,
\[
\Bigl|w_n-w_0-\eps\sum_{k=0}^{n-1}\bar{a}(w_k)\Bigr|\le 4L\delta_\eps.
\]
\end{lemma}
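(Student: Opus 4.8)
The plan is to derive a discrete increment identity for the auxiliary sequence $w_n$, showing that each step of $w$ reproduces one Euler step of the averaged ODE up to two controllable error terms, and then to sum these errors over the $O(1/\eps)$ steps. First I would difference the defining relation $w_n=x_n-\delta_\eps u(w_{n-1},n)$. Writing $x_{k+1}-x_k=\eps a(x_k,y_k)$ and inserting $\pm\delta_\eps u(w_k,k)$, the telescoping property of $u$ --- namely $\delta_\eps\bigl(u(x,k+1)-u(x,k)\bigr)=\eps\bigl(a(x,y_k)-\bar a(x)\bigr)$, which is immediate from the definition of $u$ --- yields the key identity
\[
w_{k+1}-w_k-\eps\bar a(w_k)=\eps\bigl(a(x_k,y_k)-a(w_k,y_k)\bigr)-\delta_\eps\bigl(u(w_k,k)-u(w_{k-1},k)\bigr),
\]
valid for all $0\le k\le n-1$ (with the convention $u(\cdot,0)\equiv0$, so that the $k=0$ term vanishes since $w_0=x_0$). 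Summing from $k=0$ to $n-1$ telescopes the left-hand side into exactly $w_n-w_0-\eps\sum_{k=0}^{n-1}\bar a(w_k)$, the quantity to be estimated.

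It then remains to bound the two sums on the right. The first is easy: since $|x_k-w_k|=\delta_\eps|u(w_{k-1},k)|\le\delta_\eps$ by Proposition~\ref{prop-u} and $\Lip\,a\le L$, each term is at most $\eps L\delta_\eps$, and summing over $n\le1/\eps$ steps gives at most $L\delta_\eps$. For the second sum, $\Lip\,u(\cdot,k)\le1$ gives $|u(w_k,k)-u(w_{k-1},k)|\le|w_k-w_{k-1}|$, so I need a bound on the increments of $w$. The main obstacle lies here: the crude estimate $|w_k-w_{k-1}|\le|x_k-x_{k-1}|+2\delta_\eps$, which is of order $\eps+\delta_\eps$, is too lossy, since the $\delta_\eps$ contribution would produce a term of order $\delta_\eps^2/\eps$ after summing, and this need not be $\le\delta_\eps$.

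To obtain the sharper increment bound $|w_k-w_{k-1}|\le 3L\eps$, I would feed the increment identity back into itself. Using $|x_k-w_k|\le\delta_\eps$, $\Lip\,u\le1$, $|\bar a|_\infty\le L$ (as $\bar a$ is a spatial average of $a$) and the standing hypothesis $\delta_\eps\le\tfrac12$, the identity gives the one-step contraction
\[
|w_k-w_{k-1}|\le \tfrac32 L\eps + \tfrac12|w_{k-1}-w_{k-2}|.
\]
Since $|w_1-w_0|=\eps|\bar a(w_0)|\le L\eps$, an immediate induction yields $|w_k-w_{k-1}|\le 3L\eps$ for every $k$. Plugging this into the second sum bounds it by $\delta_\eps\cdot(n-1)\cdot3L\eps\le3L\delta_\eps$, using $n\le1/\eps$. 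Combining with the $L\delta_\eps$ bound on the first sum gives the claimed estimate $4L\delta_\eps$. The only genuinely delicate point is the contraction step, where the factor $\tfrac12$ coming from $\delta_\eps\le\tfrac12$ is exactly what forces the $w$-increments to stay at the ODE scale $O(\eps)$ rather than the oscillation scale $O(\delta_\eps)$.
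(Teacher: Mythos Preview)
Your proof is correct and follows essentially the same route as the paper: both derive the same increment identity for $w_{k+1}-w_k-\eps\bar a(w_k)$, bound the $a(x_k,\cdot)-a(w_k,\cdot)$ term by $L\eps\delta_\eps$ via $|x_k-w_k|\le\delta_\eps$, and handle the $u(w_k,k)-u(w_{k-1},k)$ term by an induction that uses $\delta_\eps\le\tfrac12$ to keep $|w_k-w_{k-1}|$ at scale $O(L\eps)$. The only cosmetic difference is that the paper phrases the induction as $|w_{k+1}-w_k-\eps\bar a(w_k)|\le 4L\eps\delta_\eps$ directly, whereas you recast it as the contraction $|w_k-w_{k-1}|\le\tfrac32L\eps+\tfrac12|w_{k-1}-w_{k-2}|$; the two are equivalent and yield the same constants.
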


\begin{proof}
By definition, for all $0\leq k\leq 1/\eps$, 
\[
\delta_\eps[u(w_k,k+1)-u(w_k,k)]=\eps[a(w_k,y_k)-\bar{a}(w_k)].
\]
Thus 
\begin{align*}
& \delta_\eps u(w_{n-1},n) = \delta_\eps\sum_{k=0}^{n-1}\{ u(w_k,k+1)-u(w_{k-1},k)\} \\
 & \qquad = \delta_\eps\Bigl(\sum_{k=0}^{n-1}\{ u(w_k,k+1)-u(w_k,k)\} )
 +\sum_{k=0}^{n-1}\{ u(w_k,k)-u(w_{k-1},k)\} \Bigr)\\
 & \qquad = \eps\sum_{k=0}^{n-1}\{a(w_k,y_k)-\bar{a}(w_k)\}+{\rm I}_n,
\end{align*}
where 
\[
{\rm I}_n=\delta_\eps\sum_{k=1}^{n-1}\{ u(w_k,k)-u(w_{k-1},k)\} .
\]
This together with the definition of $x_n$ yields
\begin{align*}
w_n &= x_0+ \eps\sum_{k=0}^{n-1}a(x_k,y_k)-\delta_\eps u(w_{n-1},y,n)
\\ & = w_0+ \eps\sum_{k=0}^{n-1}a(x_k,y_k)-\eps\sum_{k=0}^{n-1}\{a(w_k,y_k)-\bar{a}(w_k)\}-{\rm I}_n
\\ &  = w_0+\eps\sum_{k=0}^{n-1}\bar{a}(w_k)-{\rm I_n}+{\rm II}_n,
\end{align*}
where
\[
{\rm II}_n=\eps\sum_{k=0}^{n-1}\{ a(x_k,y_k)-a(w_k,y_k)\}.
\]
We claim that for all $0\leq n\leq 1/\eps$, 
\begin{align*} 
|w_{n+1}-w_n-\eps\bar{a}(w_n)|\le 4L\eps\delta_\eps.
\end{align*}
The result follows by summing over $n$.

It remains to prove the claim.   It is easy to check that $w_1-w_0-\eps\bar{a}(w_0)=0$.
Inductively, suppose that
$|w_n-w_{n-1}-\eps\bar{a}(w_{n-1})|\le 4L\eps\delta_\eps$.
Notice that 
\[
|{\rm II}_{n+1}-{\rm II}_n|\leq\eps\Lip a|x_n-w_n|\leq\Lip a\,\eps\delta_\eps|u(w_{n-1},n)|\leq L\eps\delta_\eps.
\]
Also, 
\begin{align*}
|{\rm I}_{n+1}-{\rm I}_n| & \le \delta_\eps\Lip u|w_n-w_{n-1}|
  \le \delta_\eps(\eps|\bar{a}(w_{n-1})|+4L\eps\delta_\eps)
  \le 3L\eps\delta_\eps,
\end{align*}
where the second inequality follows by the induction hypothesis and the third inequality uses $\delta_\eps\le\frac12$. Therefore 
\[
|w_{n+1}-w_n-\eps\bar{a}(w_n)|\leq 
|{\rm I}_{n+1}-{\rm I}_n|+|{\rm II}_{n+1}-{\rm II}_n|
\le 4L\eps\delta_\eps,
\]
proving the claim.
\end{proof}

Define the sequence 
\[
z_{n+1}=z_n+\eps\bar{a}(z_n), \quad z_0=x_0.
\]

\begin{cor} \label{cor-xz}
$|x_n-z_n|\leq 5\delta_\eps e^{2L}$ for all $1\leq n\leq 1/\eps$,
and for all $y\in M$ with $\delta_\eps(y)\le\frac12$,
\end{cor}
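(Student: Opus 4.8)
The plan is to route everything through the auxiliary sequence $w_n$ and the comparison sequence $z_n$, taking Lemma~\ref{lem-w} as the main input. First I would record that summing the recursion $z_{n+1}=z_n+\eps\bar a(z_n)$ gives $z_n=x_0+\eps\sum_{k=0}^{n-1}\bar a(z_k)$, while Lemma~\ref{lem-w} together with $w_0=x_0$ states that $w_n=x_0+\eps\sum_{k=0}^{n-1}\bar a(w_k)+R_n$ with $|R_n|\le 4L\delta_\eps$. Subtracting, and using that $\bar a$ is Lipschitz with $\Lip\,\bar a\le L$ (inherited from $\Lip\,a\le L$ since $\bar a$ is the $\nu$-average of $a$), yields
\[
|w_n-z_n|\le \eps L\sum_{k=0}^{n-1}|w_k-z_k|+4L\delta_\eps .
\]

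Next I would apply the discrete Gronwall inequality, Proposition~\ref{prop-discreteG}, with $b_n=|w_n-z_n|$, $C=4L\delta_\eps$ and $D=\eps L$. This gives $|w_n-z_n|\le 4L\delta_\eps(1+\eps L)^n$, and since $n\le 1/\eps$ and $\log(1+t)\le t$ we have $(1+\eps L)^n\le (1+\eps L)^{1/\eps}\le e^{L}$, so that $|w_n-z_n|\le 4L\delta_\eps e^{L}$ for all $1\le n\le 1/\eps$.

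Finally I would pass from $w_n$ back to $x_n$. By the definition of $w_n$ we have $x_n-w_n=\delta_\eps u(w_{n-1},n)$, and Proposition~\ref{prop-u} gives $|u(\cdot,n)|_\infty\le 1$, hence $|x_n-w_n|\le\delta_\eps$. The triangle inequality then gives
\[
|x_n-z_n|\le |x_n-w_n|+|w_n-z_n|\le \delta_\eps+4L\delta_\eps e^{L}=(1+4Le^{L})\delta_\eps .
\]
Since $L\ge1$ forces $L\le e^{L}$ and $1\le e^{2L}$, one has $1+4Le^{L}\le 1+4e^{2L}\le 5e^{2L}$, delivering the claimed bound $|x_n-z_n|\le 5\delta_\eps e^{2L}$.

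The computations are essentially routine once Lemma~\ref{lem-w} is in hand, since the substantive estimate—controlling the averaged drift of $w_n$—is already established there. The only points requiring care are confirming $\Lip\,\bar a\le L$ so that the Gronwall comparison closes, and verifying that the accumulated constants collapse to exactly $5e^{2L}$; I expect no genuine obstacle beyond this bookkeeping.
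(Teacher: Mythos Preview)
The proposal is correct and follows essentially the same route as the paper: both subtract the summed recursions for $w_n$ and $z_n$, invoke Lemma~\ref{lem-w} and the Lipschitz bound for $\bar a$ to obtain $|w_n-z_n|\le 4L\delta_\eps+L\eps\sum_{k<n}|w_k-z_k|$, apply the discrete Gronwall inequality (Proposition~\ref{prop-discreteG}) together with $(1+\eps L)^{1/\eps}\le e^L$ to get $|w_n-z_n|\le 4L\delta_\eps e^L$, and then add $|x_n-w_n|\le\delta_\eps$ from Proposition~\ref{prop-u}. Your final constant bookkeeping $1+4Le^L\le 5e^{2L}$ is just a rearrangement of the paper's $4L\delta_\eps e^L\le 4\delta_\eps e^{2L}$ followed by $\delta_\eps+4\delta_\eps e^{2L}\le 5\delta_\eps e^{2L}$.
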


\begin{proof}
Write
\begin{align*}
w_n-z_n=w_n-x_0-\eps\sum_{k=0}^{n-1}\bar a(z_k)
=w_n-w_0-
\eps\sum_{k=0}^{n-1}\bar a(w_k)
+\eps\sum_{k=0}^{n-1}\{\bar a(w_k)-\bar a(z_k)\}.
\end{align*}
By Lemma \ref{lem-w}, 
\begin{align*}
|w_n-z_n| & \le 4L\delta_\eps+\eps\sum_{k=0}^{n-1}|\bar{a}(w_k)-\bar{a}(z_k)|
\le 4L\delta_\eps+L\eps \sum_{k=0}^{n-1}|w_k-z_k|.
\end{align*}
By Proposition~\ref{prop-discreteG},
$|w_n-z_n|\le   4L\delta_\eps e^L\le 4\delta_\eps e^{2L}$.
Moreover,
$|x_n-w_n|\leq\delta_\eps|u|_{\infty}\leq\delta_\eps$
and the result follows.
\end{proof}

\begin{lemma}\label{lm:ga5}
	$\BIG |X(n\eps)-z_n | \leq L^2e^L\eps$.
\end{lemma}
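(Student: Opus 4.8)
The statement is the standard global error estimate for the forward Euler scheme $z_{n+1}=z_n+\eps\bar a(z_n)$ approximating the solution $X$ of $\dot X=\bar a(X)$, specialised to the time horizon $n\eps\le1$. The plan is to bound the local truncation error, set up a Gronwall-type recursion for the accumulated error $e_n=X(n\eps)-z_n$, and then invoke the discrete Gronwall lemma, Proposition~\ref{prop-discreteG}.

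First I would record the two elementary bounds on $\bar a$ that drive everything. From~\eqref{eq-L1} we have $|a|_\infty\le L$ and $\Lip\,a\le L$, and since $\bar a(x)=\int_M a(x,y,0)\,d\nu_0(y)$ is an average of the functions $a(\cdot,y,0)$, both bounds pass to $\bar a$, giving $|\bar a|_\infty\le L$ and $\Lip\,\bar a\le L$. Only this first-order information is needed, which lets me avoid any bookkeeping with $D\bar a$ and with whether $X(t)$ remains in $E$.

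Next, the local truncation error. Writing the exact increment as $X((n+1)\eps)-X(n\eps)=\int_{n\eps}^{(n+1)\eps}\bar a(X(s))\,ds$, I would compare with the Euler step via
\[
\tau_n:=X((n+1)\eps)-X(n\eps)-\eps\bar a(X(n\eps))=\int_{n\eps}^{(n+1)\eps}\bigl(\bar a(X(s))-\bar a(X(n\eps))\bigr)\,ds.
\]
Using $\Lip\,\bar a\le L$ together with $|X(s)-X(n\eps)|\le\int_{n\eps}^s|\bar a(X(r))|\,dr\le L(s-n\eps)$ gives $|\tau_n|\le L^2\int_{n\eps}^{(n+1)\eps}(s-n\eps)\,ds=\tfrac12 L^2\eps^2$.

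Finally I would assemble the recursion. Since $e_0=0$ and $e_{n+1}=e_n+\eps\bigl(\bar a(X(n\eps))-\bar a(z_n)\bigr)+\tau_n$, summing and using $\Lip\,\bar a\le L$ yields, for $n\le1/\eps$,
\[
|e_n|\le\eps L\sum_{k=0}^{n-1}|e_k|+\sum_{k=0}^{n-1}\tfrac12 L^2\eps^2\le\tfrac12 L^2\eps+\eps L\sum_{k=0}^{n-1}|e_k|,
\]
where the last step uses $n\eps\le1$. Proposition~\ref{prop-discreteG} with $C=\tfrac12 L^2\eps$ and $D=\eps L$ then gives $|e_n|\le\tfrac12 L^2\eps\,(1+\eps L)^n\le\tfrac12 L^2\eps\,e^{n\eps L}\le\tfrac12 L^2 e^L\eps\le L^2 e^L\eps$, which is the claim. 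There is no genuine obstacle here; the only point requiring a little care is the local truncation estimate, and the deliberate choice to phrase it through $\Lip\,\bar a$ and $|\bar a|_\infty$ rather than through $\ddot X$ keeps the argument self-contained and independent of~\eqref{eq-L2}.
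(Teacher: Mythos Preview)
Your proof is correct and follows essentially the same route as the paper's: both bound the local truncation error via $|\bar a(X(s))-\bar a(X(n\eps))|\le L^2(s-n\eps)$, sum to obtain a recursion of the form $|e_n|\le L^2\eps+L\eps\sum_{k<n}|e_k|$, and close with Proposition~\ref{prop-discreteG}. The only cosmetic difference is that you evaluate $\int_0^\eps s\,ds=\eps^2/2$ to pick up a factor $\tfrac12$, whereas the paper uses the cruder bound $s-n\eps\le\eps$.
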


\begin{proof}
Write
\[
	X(n\eps)=x_0+\sum_{m=0}^{n-1}\int_{m\eps}^{(m+1)\eps}[\bar a(X(s))-\bar a(X(m\eps))]\,ds+\eps\sum_{m=0}^{n-1}\bar a(X(m\eps)).
\]
Since $|\bar a(X(t_1))-\bar a(X(t_2))|\le L|X(t_1)-X(t_2)|\le  L^2|t_1-t_2|$ for all $t_1,t_2$, we obtain that
\[
	\Bigl|X(n\eps)-x_0-\eps\sum_{m=0}^{n-1}\bar a(X(m\eps))\Bigr|\le L^2n\eps^2\le L^2\eps.
\]
Hence 
\[
|X(n\eps)-z_n|\le \eps\sum_{m=0}^{n-1}|\bar a(X(m\eps))-\bar a(z_m)|+L^2\eps
\le L\eps\sum_{m=0}^{n-1}|X(m\eps)-z_m|+L^2\eps.
\]
The result follows from Proposition~\ref{prop-discreteG}.
\end{proof}

\begin{pfof}{Theorem~\ref{thm-second}}
  By  Corollary~\ref{cor-xz} and Lemma~\ref{lm:ga5},
  \begin{align*}
	  |x_{[t/\eps]} - X(t)| & \leq |X(t) - X([t/\eps ] \eps)|
    + | X([t/\eps ] \eps) - z_{[t/\eps ]}| 
    + | z_{[ t/\eps ]} - x_{[t/\eps]}| \\
    & \leq \eps L+L^2e^L\eps+ 5e^{2L} \delta_\eps
       \leq 5e^{2L}( \delta_\eps+\eps).
  \end{align*}
  This completes the proof.
\end{pfof}

\begin{pfof}{Theorem~\ref{thm-gen}}
Replacing $T$, $a(x,y)$ and $\bar a(x)$ in 
Theorem~\ref{thm-second} by
$T_\eps$, $a(x,y,\eps)$ and $\bar a(x,\eps)$, we obtain that
\[
    |\hat x_\eps(t) - X_\eps(t)| \le  5e^{2L}( \delta_\eps+\eps)
  \]
where
\[
    \dot X_\eps = \bar a(X,\eps), \quad X_\eps(0) = x_0.
\]

Let $A_\eps=\sup_{x\in E}|\bar a(x,\eps)-\bar a(x,0)|$.  Then
\begin{align*}
|X_\eps(t)-X(t)| & \le \int_0^t|\bar a(X_\eps(s),\eps)-\bar a(X(s),0)|\,ds
\\ & \le tA_\eps+\int_0^t|\bar a(X_\eps(s),0)-\bar a(X(s),0)|\,ds
\\ & \le A_\eps+L\int_0^t|X_\eps(s)-X(s)|\,ds.
\end{align*}
By Gronwall's lemma,
$
|X_\eps(t)-X(t)|  \le 
 e^LA_\eps$ for all $t\le 1$.

Next, $A_\eps\le L\eps+\sup_{x\in E}|\int_M a(x,y,0)\,(d\nu_\eps-d\nu_0)(y)|$.
Combining these estimates we obtain that
\[
|\hat x_\eps(t) - X(t)| \le 5e^{2L} \delta_\eps+6e^{2L}\eps
+e^L\sup_{x\in E}\Bigl|\int_M a(x,y,0)\,(d\nu_\eps-d\nu_0)(y)\Bigr|,
\]
yielding the result.
\end{pfof}

\paragraph{Acknowledgements}
This research was supported in part by a
European Advanced Grant {\em StochExtHomog} (ERC AdG 320977).
We are grateful to Vitor Ara\'ujo, Jorge Freitas, Vilton Pinheiro, Mike Todd and Paulo Varandas for helpful discussions.


\end{document}